\setlist[enumerate, 1]{label=(\roman*)}
\setlist[itemize]{leftmargin=1.5em}
\setlist[description]{leftmargin=1em}
\setlist[itemize, 1]{label=$\blacktriangleright$}
\setlist[itemize, 2]{label=$\bullet$}
\theoremstyle{plain}
\newtheorem{theorem}{Theorem}[section]
\newtheorem{corollary}[theorem]{Corollary}
\newtheorem{lemma}[theorem]{Lemma}
\theoremstyle{definition}
\newtheorem{definition}{Definition}[section]
\theoremstyle{remark}
\newtheorem{remark}{Remark}[section]
\newcommand{\Z}{\mathbb{Z}}
\newcommand{\N}{\mathbb{N}}
\newcommand{\R}{\mathbb{R}}
\renewcommand{\L}{\mathbb{L}}
\newcommand\varpm{\mathbin{\vcenter{\hbox{%
  \oalign{\hfil$\scriptstyle+$\hfil\cr
          \noalign{\kern-.3ex}
          $\scriptscriptstyle({-})$\cr}%
}}}}
\renewcommand{\eqref}[1]{(\refeq{#1})}
\newcommand{\bela}[1]{\begin{equation}\label{#1}}
\newcommand{\ela}{\end{equation}}
\newcommand{\bear}[1]{\begin{array}{#1}}
\newcommand{\ear}{\end{array}}
\newcommand{\ba}{\mbox{\boldmath $a$}}
\newcommand{\bb}{\mbox{\boldmath $b$}}
\newcommand{\bc}{\mbox{\boldmath $c$}}
\newcommand{\bd}{\mbox{\boldmath $d$}}
\newcommand{\bx}{\mbox{\boldmath $x$}}
\newcommand{\bv}{\mbox{\boldmath $v$}}
\newcommand{\be}{\mbox{\boldmath $e$}}
\newcommand{\bl}{\mbox{\boldmath $l$}}
\newcommand{\bef}{\mbox{\boldmath $f$}}
\definecolor{grey}{rgb}{0.5,0.5,0.5}
\newcommand{\ignore}[1]{}
\newcommand{\jac}[1]{\operatorname{#1}}
\newcommand{\calH}{\mathcal{H}}
\newcommand{\sfK}{\mathsf{K}}
\newcommand{\blank}[1]{}
\title{Checkerboard incircular nets.\\ Laguerre geometry and parametrisation}
\author{Alexander I. Bobenko$^1$, Wolfgang K. Schief$^2$, Jan Techter$^1$ \bigskip\\  
$^1$Institut f\"ur Mathematik, TU Berlin, \\ Str.\@ des 17.\@ Juni 136, 10623 Berlin, Germany\bigskip\\
$^2$School of Mathematics and Statistics,\\ The University of New South Wales, Sydney, NSW 2052, Australia}
\date{\today}
\begin{document}

\maketitle

\begin{abstract}
We present a procedure which allows one to integrate explicitly the class of checkerboard IC-nets which has recently been introduced as a generalisation of incircular (IC) nets. The latter class of privileged congruences of lines in the plane is known to admit a great variety of geometric properties which are also present in the case of checkerboard IC-nets. The parametrisation obtained in this manner is reminiscent of that associated with elliptic billiards. Connections with discrete confocal coordinate systems and the fundamental QRT maps of integrable systems theory are made. The formalism developed in this paper is based on the existence of underlying pencils of conics and quadrics which is exploited in a Laguerre geometric setting. 
\end{abstract}



%
\begin{figure}[h]
  \centering
  \raisebox{-0.5\height}{\includegraphics[width=0.49\textwidth]{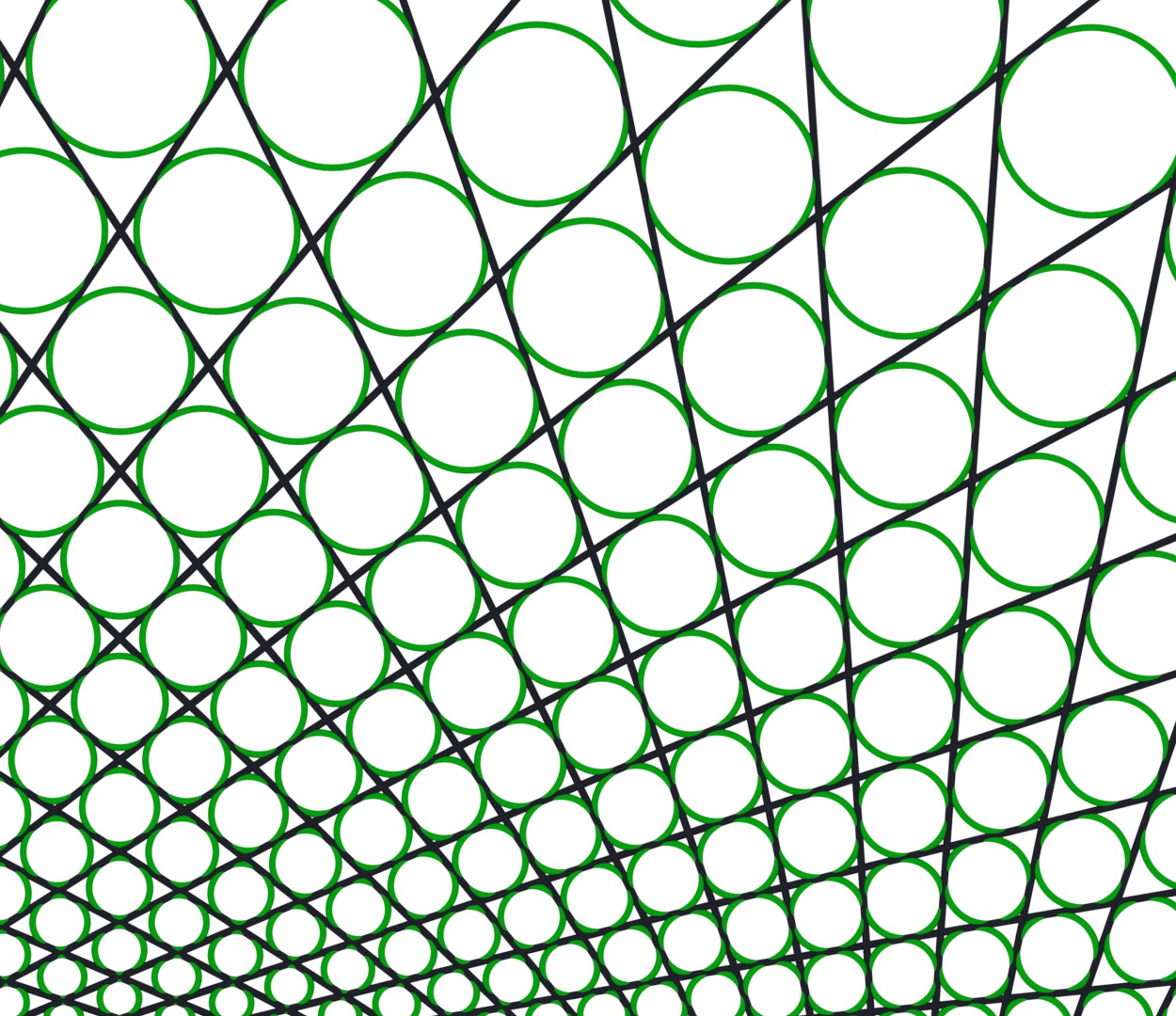}}
  \raisebox{-0.5\height}{\includegraphics[width=0.49\textwidth]{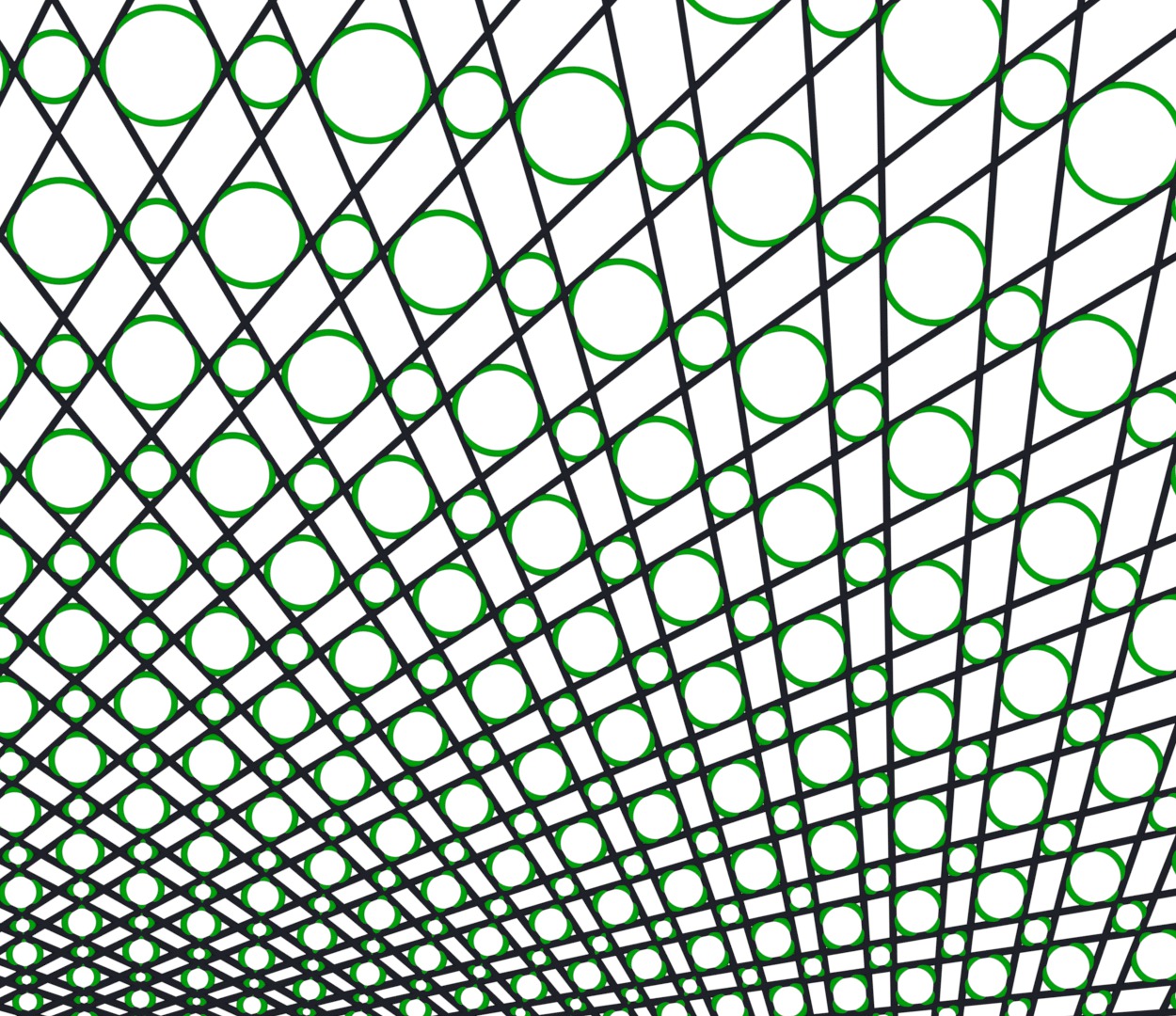}}
  \caption{
    \emph{Left:} An example of an IC-net.
    \emph{Right:} An example of a checkerboard IC-net.
  }
  \label{fig:IC-net}
\end{figure}  

\newpage

\section{Introduction}

The construction and geometry of {\em incircular nets} (IC-nets) and their generalisation to {\em checkerboard IC-nets} have recently been discussed in great detail in \cite{AB}. IC-nets were introduced by B\"ohm \cite{B} and are defined as congruences of straight lines in the plane with the combinatorics of the square grid such that each elementary quadrilateral admits an incircle as depicted in Figure~\ref{fig:IC-net} (left).
IC-nets have a wealth of geometric properties, including the distinctive feature that any IC-net comes with a conic to which its lines are tangent. Another important aspect is that IC-nets discretise confocal quadrics. In fact, it has been observed in \cite{BSST17} that IC-nets constitute particular instances of discrete confocal coordinate systems in the plane, which provides a first indication that IC-nets should be examined in the context of {\em integrable} discrete differential geometry \cite{BS}. In this connection, it is noted that an integrable systems approach to the discretisation of confocal quadrics has been taken in \cite{BSST16}. IC-nets are closely related to Poncelet(-Darboux) grids originally introduced by Darboux \cite{D8789} and further studied in \cite{LT} and \cite{Sch}.

Due to the combinatorial structure of IC-nets, their lines and circles may not be consistently oriented in such manner that these are in oriented contact. However, IC-nets are intimately related to checkerboard IC-nets which do exhibit this feature. Once again, the lines of checkerboard IC-nets have the combinatorics of the square grid but it is only required that every second quadrilateral admits an incircle, namely the ``black'' (or ``white'') quadrilaterals if the quadrilaterals of the net are combinatorially coloured like those of a checkerboard. An example of a checkerboard IC-net is displayed in Figure \ref{fig:IC-net} (right). 

{\em Confocal} checkerboard IC-nets constitute an important subclass of checkerboard IC nets and are characterised by their lines being tangent to a conic as in the case of IC-nets. This terminology is due to the remarkable fact that the points of intersection of the lines of a confocal checkerboard IC-net lie on conics which are confocal to the underlying conic.

In general, checkerboard IC-nets may be constructed in the following manner. One starts with a circle $\omega_{1,1}$ and four tangents $\ell_1$, $\ell_2$, $m_1$ and $m_2$. Subsequently, as indicated in Figure~\ref{fig:construction of checkerboard IC-net}, one chooses four circles $\omega_{0,0}$, $\omega_{0,2}$, $\omega_{2,2}$ and $\omega_{2,0}$ which touch the pairs of lines forming the ``corners'' of the configuration of given lines.
The four lines $\ell_0$, $\ell_3$, $m_0$, and $m_3$ being tangent to the respective pairs of circles are then fixed so that, in turn, the circles $\omega_{1,3}$ and $\omega_{3,1}$ are uniquely determined. An additional degree of freedom is obtained by choosing the circle $\omega_{3,3}$ Now, the entire checkerboard net is predetermined. Indeed, we first construct the lines $\ell_4$ and $m_4$, then the circles $\omega_{4,0}$, $\omega_{4,2}$, $\omega_{0,4}$, $\omega_{2,4}$ and, finally, the lines $\ell_5$ and $m_5$. The existence of the circle $\omega_{4,4}$ is non-trivial and follows from an incidence theorem \cite{AB}. 
In Section \ref{s.laguerre}, we present a simpler proof of this theorem, using the formalism developed in this paper. 
Iterative application of this theorem now generates a checkerboard IC-net of arbitrary size. In summary, a checkerboard IC-net is uniquely determined by five neighbouring circles $\omega_{0,0}$, $\omega_{2,0}$, $\omega_{0,2}$, $\omega_{2,2}$, $\omega_{1,1}$ and the circle $\omega_{3,3}$. Thus, up to Euclidean motions and homotheties, checkerboard IC-nets form a real eight-dimensional family of nets.

The main objective of this paper is the explicit integration of (generic) checkerboard IC-nets in terms of Jacobi elliptic functions \cite{NIST} similar to that of elliptic billiards \cite{DR}. As a result, we establish explicit connections with, for instance, the discrete confocal coordinate systems mentioned above and the celebrated (symmetric) QRT mappings \cite{QRT89} which play a fundamental role in the theory of discrete integrable systems (see, e.g., \cite{IR01} and references therein). We also prove constructively the existence and provide examples of confocal checkerboard IC-nets which are closed (embedded) in the ``azimuthal'' direction. In order to achieve these results, we adopt a Laguerre geometric point of view  which is natural due to the above-mentioned orientability of the lines and circles of checkerboard IC-nets. The necessary theoretical background of Laguerre geometry \cite{BT, BS} is provided in the Appendix. Thus, we first  determine the class of checkerboard IC-nets which may be mapped to confocal checkerboard IC-nets by means of real Laguerre transformations. Here, it is noted that, in the complex setting, all checkerboard IC-nets are Laguerre-equivalent to confocal checkerboard IC-nets. The classification of checkerboard IC-nets in the real setting is based on the standard classification of pencils of conics \cite{levy1964} which emerges due to the important observation that any checkerboard IC-net admits an underlying pencil of quadrics. 
It should be noted that Laguerre geometry is indispensable in the investigation of IC-nets. Recently, this classical (but lesser-known) geometry has been applied to solve problems not only in geometry \cite{SPG} but also in free-form architecture \cite{PGB}.
\begin{figure}
\begin{center}
	\includegraphics{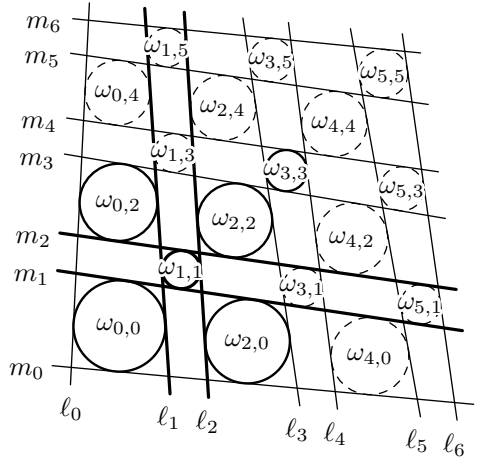}\\
	\caption{An elementary construction of a checkerboard IC-net}
	\label{fig:construction of checkerboard IC-net}
\end{center}
\end{figure}

The second step in the procedure is to parametrise confocal checkerboard IC-nets. This is done via the base curve which is shared by the pencil of quadrics associated with any given checkerboard IC-net. These base curves are known to be just another avatar of so-called hypercycles \cite{Blaschke} which constitute particular curves in the plane of degree 8. In fact, it is demonstrated that the lines of a checkerboard IC-net are tangent to a hypercycle. In the case of a confocal checkerboard IC-net, the hypercycle degenerates to the union of two identical conics with different orientations. The above-mentioned (hypercycle) base curves are of degree 4 and may be parametrised in terms of Jacobi elliptic functions. This will then lead to an explicit parametrisation of confocal checkerboard IC-nets and their Laguerre transforms. Furthermore, this parametrisation also applies to the generalised checkerboard IC-nets introduced at the end of the paper. Their geometric construction is very natural within the Laguerre-geometric framework established here and gives rise to a connection with ``non-autonomous'' QRT maps (see, e.g., \cite{RJ} and references therein).


\section{Checkerboard IC-nets. Definition and elementary properties}
\label{s.elementary}

Checkerboard IC-nets have the combinatorics of a checkerboard, where all ``black'' quadrilaterals have inscribed circles (see Fig.~\ref{fig:construction of checkerboard IC-net}). These were introduced in \cite{AB}. 
\begin{definition}
	A checkerboard IC-net is comprised of oriented lines $\ell_i, m_j$ in the plane with $i,j\in \mathbb{Z}$ such that for any  $k$ and $n$ the lines $\ell_{2k}, \ell_{2k+1}$, $m_{2n}, m_{2n+1}$ as well as the lines $\ell_{2k-1}, \ell_{2k}$, $m_{2n-1}, m_{2n}$ have a circle in oriented contact. The points of intersection $\ell_i \cap m_j$ are vertices of the corresponding quadrilateral lattice ${\mathbb Z}^2\to {\mathbb R}^2$.
\end{definition}	

\begin{remark}\label{rem.IC}
  We interpret the lines $\ell_i$ as combinatorially vertical and $m_j$ as combinatorially horizontal. Checkerboard IC-nets become IC-nets when every second combinatorially horizontal strip and every second vertical strip degenerates in the sense that the two lines of such a strip coincide up to their orientation. Then, all remaining quadrilaterals admit inscribed circles, and all lines are non-oriented. An important property of IC-nets is that all their lines are tangent to a conic \cite{AB}. The proof of this fact is based on the Graves-Chasles theorem \cite[\textsection 174]{Darboux} (see also \cite{IT}).
\end{remark}
\begin{figure}
  \centering
  \includegraphics[scale=1]{pics/fig-simpquad-6}
  \caption{Graves--Chasles theorem}
  \label{fig:graves-chasles}
\end{figure} 

\begin{theorem}[Graves--Chasles theorem]
        \label{lem:circumscribed quadrilateral}
        Suppose that all sides of a complete quadrilateral touch a conic $\alpha$.
		Denote pairs of its opposite vertices by $\{\ba, \bc\}$, $\{\bb, \bd\}$, and $\{\be, \bef\}$ (see Figure \ref{fig:graves-chasles}).
        Then, the following four properties are equivalent:
        \begin{enumerate}
                \item \label{lem:circumscribed quadrilateral:circumscirbed} $(\ba\bb\bc\bd)$ is circumscribed,
                \item \label{lem:circumscribed quadrilateral:ac} Points $\ba$ and $\bc$ lie on a conic confocal with $\alpha$,
                \item \label{lem:circumscribed quadrilateral:bd} Points $\bb$ and $\bd$ lie on a conic confocal with $\alpha$,
		\item \label{lem:circumscribed quadrilateral:ef} Points $\be$ and $\bef$ lie on a conic confocal with $\alpha$.
        \end{enumerate}
\end{theorem}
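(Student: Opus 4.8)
The plan is to work directly with the four tangent lines and their contact points with $\alpha$. Label the sides of the complete quadrilateral by $L_1 = \ba\bb$, $L_2 = \bb\bc$, $L_3 = \bc\bd$, $L_4 = \bd\ba$, so that the two diagonal points are $\be = L_1 \cap L_3$ and $\bef = L_2 \cap L_4$, and let $T_i$ denote the point at which $L_i$ touches $\alpha$. First I would recast property (i): by Pitot's theorem a quadrilateral admits an inscribed circle exactly when its pairs of opposite sides have equal total length, $|\ba\bb| + |\bc\bd| = |\bb\bc| + |\bd\ba|$. Splitting each side at its contact point, $|\ba\bb| = |\ba T_1| + |T_1 \bb|$ and so on, turns this into the vanishing of an alternating sum of the eight tangent segments $|\ba T_1|, |\bb T_1|, |\bb T_2|, \dots$ cut off on the four lines. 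Throughout I would use signed (oriented) tangent lengths, in the Laguerre spirit of the paper, so that this single equation subsumes the exterior and non-convex configurations without a separate case analysis.

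The key input is the reflection/string description of confocal conics. The reflection law states that from any point $P$ the two tangents to $\alpha$ are symmetric with respect to the tangent lines of the two confocal conics through $P$; equivalently, by Graves' extension of the gardener's construction, the confocal conics are the level sets of the functional $P \mapsto |PU| + |PV| \mp \widehat{UV}$, where $U,V$ are the contact points of the tangents from $P$ to $\alpha$ and $\widehat{UV}$ is the enclosed arc (the sum-and-arc form yielding the confocal ellipses, the difference form the confocal hyperbolas). Applying this at opposite vertices, property (ii) asserts that this functional takes equal values at $\ba$ (with contact points $T_4,T_1$) and at $\bc$ (with $T_2,T_3$); properties (iii) and (iv) are the analogous equalities for the pairs $\{\bb,\bd\}$ and $\{\be,\bef\}$.

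It then remains to match each confocal equality with the Pitot relation. The tangent-length parts of the two functionals are precisely the segments appearing in the alternating Pitot sum, while the arc terms $\widehat{T_4T_1}, \widehat{T_2T_3}, \dots$ must cancel; I expect this cancellation to be forced by the single fact that all four lines touch the \emph{one} conic $\alpha$, which ties the four arcs $\widehat{T_1T_2}, \dots, \widehat{T_4T_1}$ to the tangent lengths. Carrying this out should reduce condition (ii) to exactly the alternating identity expressing (i), and the identical computation — differing only in which segments and arcs enter and in whether the pertinent confocal conic is an ellipse or a hyperbola — should reduce (iii) and (iv) to the same relation, yielding the full equivalence of all four statements.

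The main obstacle is precisely this arc bookkeeping: establishing the Graves-type string lemma rigorously and verifying that the arc contributions of the confocal functionals cancel against one another, while simultaneously tracking whether each opposite pair lies on a confocal ellipse or a confocal hyperbola (the two cases share the orthogonal reflection law but differ by the sum-versus-difference form of the functional). I expect the oriented-tangent-length formalism to absorb the sign ambiguities and collapse the theorem to a single signed identity; the residual difficulty is the metric relation between tangent lengths and arc lengths on $\alpha$, which is where the focal distances, or equivalently the elliptic-coordinate description of the confocal family, must be brought in.
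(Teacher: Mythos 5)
The paper itself does not prove this theorem: it is quoted as a classical result, with the proof attributed to Darboux \cite[\textsection 174]{Darboux} and to \cite{IT}, so there is no internal argument to measure your proposal against. Judged on its own, your strategy --- signed Pitot plus Graves' string construction for the confocal family --- is indeed the classical route, and the final remark that the elliptic-coordinate description must ultimately be invoked points in the right direction. However, the central matching step is wrong as stated, and the gap it hides is exactly where the content of the theorem lies. Write the tangency points of the sides $\ba\bb,\bb\bc,\bc\bd,\bd\ba$ as $T_1,T_2,T_3,T_4$. The Pitot relation $|\ba\bb|+|\bc\bd|=|\bb\bc|+|\bd\ba|$, after splitting at the $T_i$, reads
\[
\bigl(|\ba T_1|-|\ba T_4|\bigr)+\bigl(|\bb T_1|-|\bb T_2|\bigr)+\bigl(|\bc T_3|-|\bc T_2|\bigr)+\bigl(|\bd T_3|-|\bd T_4|\bigr)=0,
\]
i.e.\ it involves the \emph{difference} of the two tangent lengths at each of the four vertices $\ba,\bb,\bc,\bd$. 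The Graves functional whose level sets are the confocal ellipses involves the \emph{sum} $|PU|+|PV|$ of the tangent lengths at a point, so the equality of its values at $\ba$ and $\bc$ is an identity in $|\ba T_1|+|\ba T_4|$ and $|\bc T_2|+|\bc T_3|$; it is not built from "precisely the segments appearing in the alternating Pitot sum" --- the signs within each vertex disagree, and the Pitot sum additionally contains the four segments at $\bb$ and $\bd$, which do not occur in the $\{\ba,\bc\}$ functionals at all. Consequently (ii) cannot reduce to (i) by term-by-term cancellation of arcs; the difference-type (hyperbola) functional must enter, and one must decide, configuration by configuration, which diagonal pair lies on a confocal ellipse and which on a confocal hyperbola, since both cases occur among the three pairs $\{\ba,\bc\}$, $\{\bb,\bd\}$, $\{\be,\bef\}$.

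Concretely, three things are missing. First, you need the companion string lemma for the confocal hyperbolas (a difference of tangent lengths corrected by arcs is constant along each hyperbola of the family), with a consistent sign convention for which tangent point carries the plus sign; this is not a corollary of the ellipse case and its arc terms do not cancel against those of the ellipse functional in any obvious way. Second, the equivalence of (i) with each of (ii), (iii), (iv) separately requires relating the functionals at one diagonal pair to those at another, which is precisely where the global statement of Graves' theorem (constancy along the entire confocal conic) and the orthogonality of the two families are used --- a single signed identity in the eight tangent segments cannot simultaneously encode all three equivalences, since they involve disjoint sets of segments. Third, for the pair $\{\be,\bef\}$ the relevant quadrilateral is crossed, so the Pitot criterion takes its excircle (signed) form, whose converse also needs proof; the assertion that an oriented-length formalism "subsumes" all configurations is plausible but is itself a lemma. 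Until the hyperbola lemma is proved and the case analysis carried out, the "arc bookkeeping" you defer is not a residual technicality but the body of the proof; for a complete argument along these lines see \cite{IT} or the original treatment in \cite{AB}.
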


IC-nets are intimately related to a subclass of checkerboard IC-nets, namely confocal checkerboard IC-nets.
\begin{definition}\cite{AB}.
\label{def:checkerboard confocal IC-net}
A checkerboard IC-net is called {\em confocal} if all lines of it are tangent to a conic. 
\end{definition}
In fact, this class of checkerboard IC-nets constitutes a natural generalisation of IC-nets. However, in contrast to IC-nets, here, all circles and lines can be oriented so that the corresponding circles and lines are in oriented contact. 
Moreover, confocal checkerboard IC-nets can be regarded as subdivisions of IC-nets. This follows from the following lemma.

\begin{lemma}
\label{lem:subdivision}
Let the six lines $\ell_0,\ell_1,\ell_2,m_0,m_1,m_2$ in Figure \ref{fig:subdivision} (left) touch a conic. Then, any two of the incidences 
\begin{enumerate}
\item $\ell_0, \ell_1, m_0, m_1$ are tangent to a circle,
\item $\ell_1, \ell_2, m_1, m_2$ are tangent to a circle,
\item $\ell_0, \ell_2, m_0, m_2$ are tangent to a circle,
\end{enumerate}
imply the third one.
\end{lemma}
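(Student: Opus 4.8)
The plan is to collapse all three incidences onto a single scalar invariant by way of the Graves--Chasles theorem and then to finish by transitivity. Writing $P_{ij}:=\ell_i\cap m_j$, the three circle-incidences say precisely that the quadrilaterals $(P_{00}P_{01}P_{11}P_{10})$, $(P_{11}P_{12}P_{22}P_{21})$ and $(P_{00}P_{02}P_{22}P_{20})$ each admit an inscribed circle. In every case all four sides lie among the six given lines and are therefore tangent to $\alpha$, so Theorem~\ref{lem:circumscribed quadrilateral} applies. Selecting in each quadrilateral the pair of opposite vertices on its main diagonal, the theorem rewrites the three incidences as the conditions that $\{P_{00},P_{11}\}$, $\{P_{11},P_{22}\}$ and $\{P_{00},P_{22}\}$ respectively lie on conics confocal with $\alpha$. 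Abbreviating by $P\sim Q$ the relation ``$P$ and $Q$ lie on a common conic confocal with $\alpha$'', the assertion of the lemma is exactly that among $P_{00}\sim P_{11}$, $P_{11}\sim P_{22}$ and $P_{00}\sim P_{22}$ any two imply the third.

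This is transitivity of $\sim$ on the triangle $P_{00},P_{11},P_{22}$, and it is here that the difficulty sits, since $\sim$ is not transitive in general: through a generic point pass two confocal conics, one of each type, so a priori two of the points could share a confocal conic of one type and the third share one of the other type. To exclude this I would fix a parametrisation of the tangent lines of $\alpha$ and attach to each diagonal vertex $P_{ii}$ the parameter $\gamma_i$ of a single, consistently chosen confocal conic through it. The key claim to establish is then the linearisation $P_{ii}\sim P_{jj}\iff\gamma_i=\gamma_j$: that is, whenever the two diagonal vertices share any confocal conic they in fact share the selected one. Granting this, the three incidences become the bare equations $\gamma_0=\gamma_1$, $\gamma_1=\gamma_2$ and $\gamma_0=\gamma_2$.

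The linearisation claim is the technical heart, and the delicate point is the branch selection among the two confocal conics through each vertex; I expect it to be precisely the consistent orientation of the lines and circles --- the Laguerre-geometric feature that distinguishes checkerboard IC-nets --- which pins down one branch uniformly and so makes $\sim$, restricted to this configuration, coincide with equality of the scalars $\gamma_i$. Dually, this is the statement that the pencil of conics underlying $\alpha$ trivialises in an additive parameter, the viewpoint developed later in the paper. Once the three incidences read $\gamma_0=\gamma_1$, $\gamma_1=\gamma_2$, $\gamma_0=\gamma_2$, any two of them force the third by transitivity of equality, which completes the proof (see also Figure~\ref{fig:subdivision}). The reduction through Theorem~\ref{lem:circumscribed quadrilateral} and the concluding transitivity are formal; only the branch bookkeeping in the linearisation is expected to require care.
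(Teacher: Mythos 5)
You follow the same route as the paper: Theorem~\ref{lem:circumscribed quadrilateral} converts each of the three incircle incidences into the statement that the corresponding diagonal pair among $P_{00},P_{11},P_{22}$ lies on a conic confocal with $\alpha$, and the lemma reduces to transitivity of the relation ``lie on a common confocal conic''. You are in fact more explicit than the paper about the only non-formal point: since two confocal conics (one of each type) pass through a generic point, this relation is not a priori transitive. The paper's proof disposes of this with a bare ``hence, on a common conic confocal to $\alpha$''.

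The gap is that your resolution of precisely this point is announced but not carried out. The ``linearisation claim'' $P_{ii}\sim P_{jj}\iff\gamma_i=\gamma_j$ --- that whenever two of the diagonal vertices share \emph{any} confocal conic they share the one selected conic --- is the entire content that remains to be proven; as stated for arbitrary pairs of points it is false (two points on a common confocal ellipse need not lie on a common confocal hyperbola), so it can only hold by exploiting the specific configuration, which you do not do. Moreover, the mechanism you propose for pinning down the branch cannot be the right one: Lemma~\ref{lem:subdivision} and the Graves--Chasles theorem are statements about unoriented tangency, the paper's proof of this lemma uses no orientations, and the Laguerre-geometric machinery enters only later. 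What actually closes the step is a computation with the tangent-line parametrisation of $\alpha$ (elliptic coordinates, as in \cite{AB}), which identifies, for a quadrilateral with two ``vertical'' and two ``horizontal'' sides in this configuration, \emph{which} of the two confocal families carries the diagonal pair, uniformly over the three quadrilaterals; only then does transitivity follow. Without that bookkeeping (or an explicit appeal to \cite{AB}) your argument --- like the paper's one-line ``hence'', to be fair --- stops exactly where the proof begins.
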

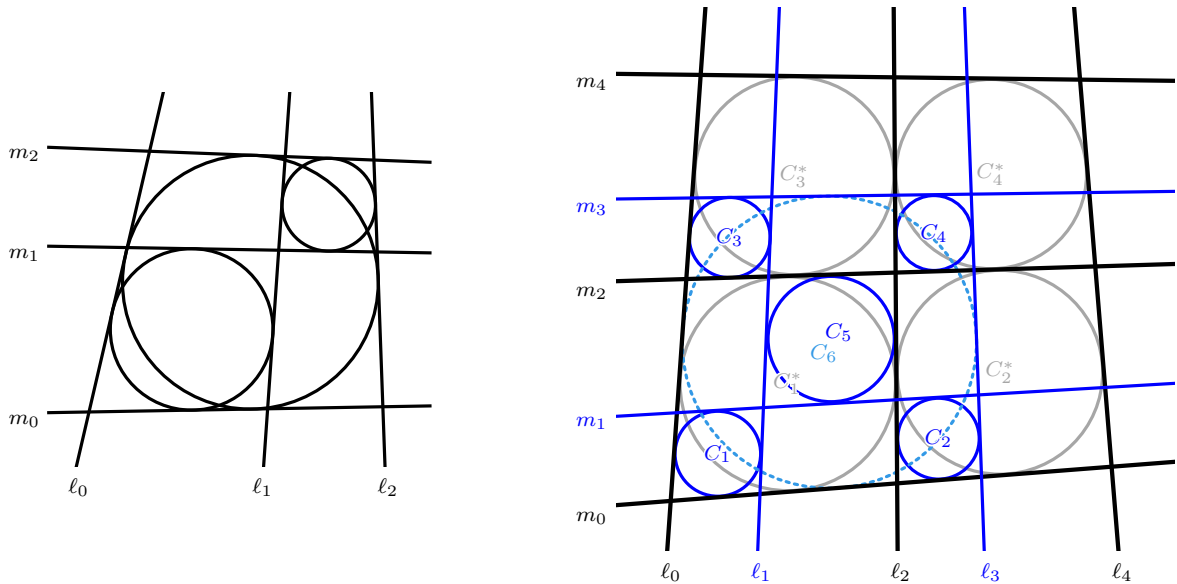
\begin{figure}
  \begin{center}
    \definecolor{xdxdff}{rgb}{0.49019607843137253,0.49019607843137253,1.}
    \definecolor{qqqqff}{rgb}{0.,0.,1.}
    \definecolor{brightblue}{rgb}{0.2,0.6,0.9}
    \definecolor{cqcqcq}{rgb}{0.65,0.65,0.65}
    \raisebox{-0.5\height}{
    \begin{tikzpicture}[line cap=line join=round,>=triangle 45,x=1.0cm,y=1.0cm, scale=1.1]
      \begin{scriptsize}
        \draw (0.75, 1.25) node {$m_2$};
        \draw (0.75, 0.05) node {$m_1$};
        \draw (0.75,-1.95) node {$m_0$};
        \draw (1.4, -2.75) node {$\ell_0$};
        \draw (3.6, -2.75) node {$\ell_1$};
        \draw (5.1, -2.75) node {$\ell_2$};
      \end{scriptsize}
      \clip(0.5,-2.5) rectangle (6,2);
      \draw [line width=1.2pt,domain=1.0218507454290813:5.620227103827782] plot(\x,{(--2.1171234317981646-0.06032777739217865*\x)/1.528303693935193});
      \draw [line width=1.2pt,domain=1.0218507454290813:5.620227103827782] plot(\x,{(--2.864455176505432-0.027308037642174376*\x)/-1.5292501079617975});
      \draw [line width=1.2pt,domain=1.0218507454290813:5.620227103827782] plot(\x,{(--1.6524575856362715-0.17074627539565726*\x)/9.703730177564767});
      \draw [line width=1.2pt,domain=1:6] plot(\x,{(2.9037862282022457-1.4903785925758262*\x)/-0.34369066384284586});
      \draw [line width=1.2pt,domain=1:6] plot(\x,{(-7.601848459696713+1.5285068728232307*\x)/-0.05493961551725765});
      \draw [line width=1.2pt,domain=1:6] plot(\x,{(2.2232511465521205-0.5874505735209161*\x)/-0.04074279205153397});
      \draw [line width=1.2pt] (2.750421457119531,-0.8510485519714623) circle (0.9727926759480807cm);
      \draw [line width=1.2pt] (4.3903677283120475,0.652331019971891) circle (0.5592061475810507cm);
      \draw [line width=1.2pt] (3.453026011141038,-0.28171205504568275) circle (1.5294939102922367cm);
    \end{tikzpicture}}
  \hspace{1cm}
  \raisebox{-0.5\height}{
    \begin{tikzpicture}[line cap=round,line join=round,>=triangle 45,x=0.6cm,y=0.6cm, scale=1.2]
      \begin{scriptsize}
        \draw                (-22.00, -10.6) node {$\ell_0$};
        \draw [color=qqqqff] (-20.35, -10.6) node {$\ell_1$};
        \draw                (-17.80, -10.6) node {$\ell_2$};
        \draw [color=qqqqff] (-16.15, -10.6) node {$\ell_3$};
        \draw                (-13.75, -10.6) node {$\ell_4$};
        \draw                (-23.45, -9.60) node {$m_0$};
        \draw [color=qqqqff] (-23.45, -7.85) node {$m_1$};
        \draw                (-23.45, -5.45) node {$m_2$};
        \draw [color=qqqqff] (-23.45, -3.90) node {$m_3$};
        \draw                (-23.45, -1.60) node {$m_4$};
      \end{scriptsize}
      \begin{scriptsize}

      \end{scriptsize}
      \clip(-23,-10.2) rectangle (-12.8,-0.2);
      \draw [line width=1.2pt,color=cqcqcq] (-16.15014408064675,-3.2663866065196037) circle (1.0454563838831328cm);
      \draw [line width=1.2pt,color=cqcqcq] (-19.746632129029095,-3.29962797404792) circle (1.0925239900503154cm);
      \draw [line width=1.2pt,color=cqcqcq] (-15.98960086073837,-6.91571556660574) circle (1.1260761627943001cm);
      \draw [line width=1.2pt,color=cqcqcq] (-19.866301365803707,-7.127387968034011) circle (1.1807951798397909cm);
      \draw [line width=1.2pt,color=qqqqff] (-20.92746803373379,-4.435714398543925) circle (0.4431998589424239cm);
      \draw [line width=1.2pt,color=qqqqff] (-17.113168569460893,-8.139347353097717) circle (0.4466873824444368cm);
      \draw [line width=1.2pt,color=qqqqff] (-21.14509390667169,-8.416916460456557) circle (0.4692494815403048cm);
      \draw [line width=1.2pt,color=qqqqff] (-19.07437586483562,-6.30459571172441) circle (0.6921114863355595cm);
      \draw [line width=1.2pt,color=qqqqff] (-17.195085539324268,-4.352056583614257) circle (0.41383581310473956cm);
      \draw [line width=1.2pt,dotted,color=brightblue] (-19.10661963212508,-6.361328416195585) circle (1.6134972619999448cm);
      \begin{scriptsize}
        \draw [color=cqcqcq] (-19.866301365803707,-7.127387968034011) node {\textcolor{cqcqcq}{\contour{white}{$C_1^*$}}};
        \draw [color=cqcqcq] (-15.98960086073837,-6.91571556660574) node {\textcolor{cqcqcq}{\contour{white}{$C_2^*$}}};
        \draw [color=cqcqcq] (-19.746632129029095,-3.29962797404792) node {\textcolor{cqcqcq}{\contour{white}{$C_3^*$}}};
        \draw [color=cqcqcq] (-16.15014408064675,-3.2663866065196037)  node {\textcolor{cqcqcq}{\contour{white}{$C_4^*$}}};
        \draw [color=qqqqff] (-21.14509390667169,-8.416916460456557)  node {\textcolor{qqqqff}{\contour{white}{$C_1$}}};
        \draw [color=qqqqff] (-17.113168569460893,-8.139347353097717) node {\textcolor{qqqqff}{\contour{white}{$C_2$}}};
        \draw [color=qqqqff] (-20.92746803373379,-4.435714398543925) node {\textcolor{qqqqff}{\contour{white}{$C_3$}}};
        \draw [color=qqqqff] (-17.195085539324268,-4.352056583614257) node {\textcolor{qqqqff}{\contour{white}{$C_4$}}};
        \draw [color=qqqqff] (-19.20661963212508,-6.561328416195585) node {\textcolor{brightblue}{$C_6$}};
        \draw [color=qqqqff] (-18.93437586483562,-6.17459571172441) node {\textcolor{qqqqff}{$C_5$}};
      \end{scriptsize}
      \draw [line width=1.6pt,domain=-23.533953826950885:-12.630265709267228] plot(\x,{(-1743.7983143712775-81.64478422243059*\x)/-5.698125565523831});
      \draw [line width=1.6pt,domain=-23.533953826950885:-12.630265709267228] plot(\x,{(--637.0156581045989-6.585032873989613*\x)/-84.21002803635429});
      \draw [line width=1.6pt,domain=-23.533953826950885:-12.630265709267228] plot(\x,{(-751.1287779319605-51.27279689621881*\x)/4.149629125596666});
      \draw [line width=1.6pt,domain=-23.533953826950885:-12.630265709267228] plot(\x,{(--144.03427938237343--1.0587402727070867*\x)/-84.22198468034219});
      \draw [line width=1.6pt,domain=-23.533953826950885:-12.630265709267228] plot(\x,{(--380.91912398166517-2.6153085136644116*\x)/-84.18802643986376});
      \draw [line width=1.6pt,domain=-23.533953826950885:-12.630265709267228] plot(\x,{(-922.4428255924315-51.43911950538876*\x)/0.368928111206678});
      \draw [line width=1.2pt,color=qqqqff,domain=-23.533953826950885:-12.630265709267228] plot(\x,{(--769.40837405141-7.199625587328178*\x)/-121.00233896295484});
      \draw [line width=1.2pt,color=qqqqff,domain=-23.533953826950885:-12.630265709267228] plot(\x,{(-1059.8413561113573-52.95831438594945*\x)/-2.0950285094678662});
      \draw [line width=1.2pt,color=qqqqff,domain=-23.533953826950885:-12.630265709267228] plot(\x,{(--291.2416014610257-1.1761445149498884*\x)/-85.43608595309678});
      \draw [line width=1.2pt,color=qqqqff,domain=-23.533953826950885:-12.630265709267228] plot(\x,{(-875.9054675146829-52.62171522526691*\x)/1.8994222472520157});
    \end{tikzpicture}} 
  \end{center}
  \caption{Confocal checkerboard IC-nets as subdivisions of IC-nets. Two incidence theorems.}
  \label{fig:subdivision}
\end{figure}
\begin{proof}
Let the lines $\ell_0,\ell_1,\ell_2,m_0,m_1,m_2$ be tangent to a conic $\alpha$. Consider the three points of intersection $\ell_0\cap m_0$, $\ell_1\cap m_1$, $\ell_2\cap m_2$. If, for instance, the two circles in (i) and (ii) exist then the Graves--Chasles theorem implies that the pairs of points
$(\ell_0\cap m_0,\ell_1\cap m_1)$ and $(\ell_1\cap m_1,\ell_2\cap m_2)$ lie on conics confocal to $\alpha$ and, hence, on a common conic confocal to $\alpha$. Application of the Graves--Chasles theorem to the third pair of points of intersection $(\ell_0\cap m_0,\ell_2\cap m_2)$ now implies that the circle in (iii) exists.   
\end{proof}

It is now evident that replacing pairs of diagonally neighbouring circles of a checkerboard IC-net by ``large'' circles inscribed in combinatorial $2\times 2$ quadrilaterals as in Lemma~\ref{lem:subdivision} leads to an associated IC-net. The converse is also true.

\begin{theorem}
\label{prop:subdivision}
For every IC-net there exists a one-parameter family of subdivisions into confocal checkerboard IC-nets with the same tangent conic $\alpha$.
\end{theorem}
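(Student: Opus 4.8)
The plan is to realise the given IC-net as the odd-indexed sublattice of the sought confocal checkerboard IC-net and to construct the even-indexed lines by reflection in a confocal conic. Write the IC-net lines as $L_k,M_n$; by Remark \ref{rem.IC} they are all tangent to a fixed conic $\alpha$, and every elementary quadrilateral $(L_k,L_{k+1},M_n,M_{n+1})$ is circumscribed. I set $\ell_{2k-1}:=L_k$ and $m_{2n-1}:=M_n$ and seek tangent lines $\ell_{2k}$ (between $L_k$ and $L_{k+1}$) and $m_{2n}$ (between $M_n$ and $M_{n+1}$) so that the two ``black'' quadrilaterals $(\ell_{2k-1},\ell_{2k},m_{2n-1},m_{2n})$ and $(\ell_{2k},\ell_{2k+1},m_{2n},m_{2n+1})$ of every combinatorial $2\times 2$ block carry an incircle. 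The incircle of $(L_k,L_{k+1},M_n,M_{n+1})$ is already present as the ``large'' circle, so in the notation of Lemma \ref{lem:subdivision} this is incidence (iii); hence, by that lemma, it suffices to secure incidence (i), namely an incircle in $(\ell_{2k-1},\ell_{2k},m_{2n-1},m_{2n})$, and incidence (ii) will then follow automatically.

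Next I would reformulate incidence (i) through the Graves--Chasles theorem. For the complete quadrilateral with sides $L_k,\ell_{2k},M_n,m_{2n}$ (all tangent to $\alpha$) the pair of opposite vertices $\{\be,\bef\}$ of Theorem \ref{lem:circumscribed quadrilateral} is precisely $\{L_k\cap\ell_{2k},\,M_n\cap m_{2n}\}$, so by equivalence \ref{lem:circumscribed quadrilateral:ef} the quadrilateral is circumscribed if and only if the points $e_k:=L_k\cap\ell_{2k}$ and $f_n:=M_n\cap m_{2n}$ lie on a common conic confocal with $\alpha$. The decisive observation is that $e_k$ depends only on $k$ and $f_n$ only on $n$. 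Therefore it is enough to arrange that all the $e_k$ and all the $f_n$ lie on one and the same confocal conic $\gamma$; then every pair $(e_k,f_n)$ lies on $\gamma$ and every incidence (i) holds at once.

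This dictates the construction and exhibits the free parameter. Fix a conic $\gamma$ confocal with $\alpha$ and enclosing it, so that every point of $\gamma$ lies outside $\alpha$ and admits two real tangents to $\alpha$. For each $k$ let $e_k$ be an intersection point of $L_k$ with $\gamma$ and let $\ell_{2k}$ be the \emph{second} tangent from $e_k$ to $\alpha$; since $L_k$ is itself a tangent to $\alpha$ through $e_k$, the confocal reflection property identifies $\ell_{2k}$ as the image of $L_k$ under reflection in $\gamma$ at $e_k$, and in particular $L_k\cap\ell_{2k}=e_k\in\gamma$. Defining $m_{2n}$ from $M_n$ in the same way yields $f_n\in\gamma$. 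By the previous paragraph all black quadrilaterals are then circumscribed, all lines are tangent to $\alpha$, and (the lines and inscribed circles being coherently orientable relative to $\alpha$) we obtain a confocal checkerboard IC-net that subdivides the given IC-net. Letting $\gamma$ run through the confocal pencil yields the asserted one-parameter family.

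The main obstacle is not the incidence bookkeeping but the \emph{combinatorial} soundness of the output. One must choose, consistently in $k$ (resp.\ $n$) and continuously in $\gamma$, the branch of $L_k\cap\gamma$ and the correct one of the two tangents from $e_k$, so that each $\ell_{2k}$ actually falls between $\ell_{2k-1}$ and $\ell_{2k+1}$, so that the combined families $\{\ell_i\}$, $\{m_j\}$ realise a genuine quadrilateral lattice $\mathbb Z^2\to\mathbb R^2$ with the checkerboard colouring, and so that all lines and their inscribed circles can be placed in oriented contact. I expect these requirements to hold for $\gamma$ ranging over a suitable sub-interval of the confocal pencil: for $\gamma$ close to $\alpha$ the inserted line $\ell_{2k}$ is a small perturbation of $L_k$, which preserves the ordering and the orientations. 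The remaining verification is then a routine continuity and genericity argument rather than a new incidence phenomenon.
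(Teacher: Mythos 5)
Your proposal is correct in substance but takes a genuinely different route from the paper's. The paper argues locally and inductively: it keeps the IC-net lines as the even-indexed lines, inserts the odd-indexed ones one at a time (each determined by tangency to $\alpha$ and to a previously constructed circle, starting from a freely chosen line $\ell_1$ tangent to $\alpha$), and closes each combinatorial $2\times 2$ block by two applications of Lemma~\ref{lem:subdivision} sandwiching a Graves--Chasles step that places four intersection points on a common confocal conic. You instead argue globally: you fix a confocal conic $\gamma$ at the outset, define \emph{all} inserted lines simultaneously as second tangents to $\alpha$ from points of $L_k\cap\gamma$ and $M_n\cap\gamma$, and exploit the fact that the diagonal vertex pair of each small black quadrilateral splits as $\{e_k,f_n\}$ with $e_k$ depending only on $k$ and $f_n$ only on $n$ --- so placing them all on the single conic $\gamma$ settles every incidence of that type at once by part (iv) of Theorem~\ref{lem:circumscribed quadrilateral}, after which a single application of Lemma~\ref{lem:subdivision} per block (with the original incircle supplying incidence (iii)) yields the remaining black quadrilaterals. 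Your version buys an explicit identification of the free parameter as the member $\gamma$ of the confocal pencil (equivalent to the paper's free initial line, which determines $\gamma$ as the confocal conic through its intersection with the neighbouring IC-net line, and conversely) and makes visible from the start the confocal conic carrying the vertices $\ell_{2k-1}\cap\ell_{2k}$ and $m_{2n-1}\cap m_{2n}$; the paper's induction needs no such global choice and matches the propagation style of its other incidence arguments. The obligations you defer --- consistent branch choices for $L_k\cap\gamma$, betweenness of the inserted lines, real intersections of every tangent line with $\gamma$ (automatic for an enclosing confocal ellipse when $\alpha$ is an ellipse, requiring more care in the hyperbolic case), and coherent orientations --- are genuine but of the same order as what the paper itself leaves implicit, so I do not regard them as a gap.
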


\begin{proof}
In order to describe a subdivision of an IC-net into a checkerboard IC-net, one should consider a larger part of the net as shown in Figure~\ref{fig:subdivision} (right). Thus, let $\ell_{2n}, m_{2n}$ be lines of an IC-net with the tangent conic $\alpha$ and orient them as in Figure~\ref{fig:subdivision}. Choose an arbitrary line $\ell_1$ touching the conic $\alpha$ or, equivalently, a circle $C_1$ in oriented contact with $\ell_0, \ell_1, m_0$. Define $m_1$ as the fourth line touching $\alpha$ and $C_1$. The circle $C_2$ is the unique circle in oriented contact with $\ell_2, m_0, m_1$, and $C_3$ is in oriented contact with $\ell_0, \ell_1, m_2$. Define the lines $\ell_3$ and $m_3$ by the requirement that they touch $\alpha$ and $C_2$ and $C_3$ respectively. 

By applying the Graves--Chasles theorem and the above lemma, one can now show that the lines $\ell_2, \ell_3, m_2, m_3$ have a common circle $C_4$ in oriented contact. Indeed, according to Lemma \ref{lem:subdivision}, the existence of the circles $C_1$ and the (original) circle $C_1^*$ which touches the lines $\ell_0, \ell_2, m_0, m_2$ gives rise to a circle $C_5$ which is circumscribed by the lines $\ell_1,\ell_2,m_1,m_2$.
The existence of the circles $C_2, C_3,C_5$ implies, in turn, that the points of intersection $\ell_0\cap m_3, \ell_1\cap m_2, \ell_2\cap m_1, \ell_3\cap m_0$ lie on a conic confocal to $\alpha$. Accordingly, the lines $\ell_0, m_0, \ell_3, m_3$ circumscribe a circle $C_6$.  A second application of Lemma \ref{lem:subdivision} to the circles $C_1^*,C_6$ leads to the existence of the circle $C_4$.

Iterative application of the above procedure generates all lines of a checkerboard IC-net subdivision with odd indices, that is, $\ell_{2n+1}, m_{2n+1}$. The only free parameter of this subdivision is encoded in the line $\ell_1$ touching $\alpha$. 
\end{proof}


\section{Laguerre geometric description of checkerboard IC-nets}
\label{s.laguerre}

It is natural to study checkerboard IC-nets in terms of Laguerre geometry. Such a description will allow us to prove fundamental properties of these nets and will finally lead to an explicit description of them. Here, we present a brief description of checkerboard IC-nets using the Blaschke cylinder model, consigning more details of Laguerre geometry to the Appendix. 

Laguerre geometry in the plane deals with oriented circles and oriented straight lines. 
Lines 
$$
\{{\bx}\in \R^2: ({\bv},\bx)_{\R^2}=d\},
$$
with unit normals $\bv\in \mathbb{S}^1$ and $d\in \R$, can be put into correspondence with 3-tuples $(\bv,d)$. Opposite 3-tuples $(\bv,d)$ and  $(-\bv,-d)$ correspond to two different orientations of the same line. Thus, oriented lines are points of the {\em Blaschke cylinder}
$$
{\cal Z}=\{\ell=(\bv,d)\in\R^3: |\bv|=1 \}=\mathbb{S}^1\times \R\subset \R^3. 
$$
Oriented circles 
$$
\{\bx\in \R^2: |\bx-\bc|^2=r^2\}
$$
with centres $\bc\in\R^2$ and signed radii $r\in\R$ are in one-to-one correspondence with planes in $\R^3$ non-parallel to the axis of the Blaschke cylinder $\cal Z$:
$$
S= \{(\bv,d)\in\R^3: (\bc,\bv)_{\R^2}-d-r=0 \}.
$$
Pairs of signed radii $r$ and $-r$ correspond to two different orientations of the same circle. The intersection of such a plane $S$ with $\cal Z$ consists of points of $\cal Z$ which represent lines in oriented contact with the corresponding circle, i.e., oriented lines which are tangent to the circle and exhibit corresponding orientation.
Accordingly, the set of planes in $\R^3$ passing through a given point $\ell=(\bv,d)\in\cal Z$ may be identified with the set of oriented circles in oriented contact with the oriented line $\ell$. Finally, the set of planes in $\R^3$ passing through two points $\ell_1,\ell_2\in\cal Z$, i.e., the set of planes containing the line $L=(\ell_1,\ell_2)\subset \R^3$, is identified with the set of oriented circles in oriented contact with the lines $\ell_1$ and $\ell_2$ (see Figure~\ref{f.Blaschke-cylinder}). The latter identification, together with the fact that four oriented lines are in contact with a common oriented circle if and only if the four corresponding points of the Blaschke cylinder are coplanar, will be crucial for the description of checkerboard IC-nets.
\begin{figure}
  \definecolor{darkgreen}{rgb}{0.,0.7,0.}
  \centering
    \raisebox{-0.5\height}{
    \begin{tikzpicture}[line cap=line join=round,>=stealth,x=1.0cm,y=1.0cm, scale=1]
      \node[anchor=south west,inner sep=0] (image) at (0,0) {\includegraphics[width=0.337\textwidth]{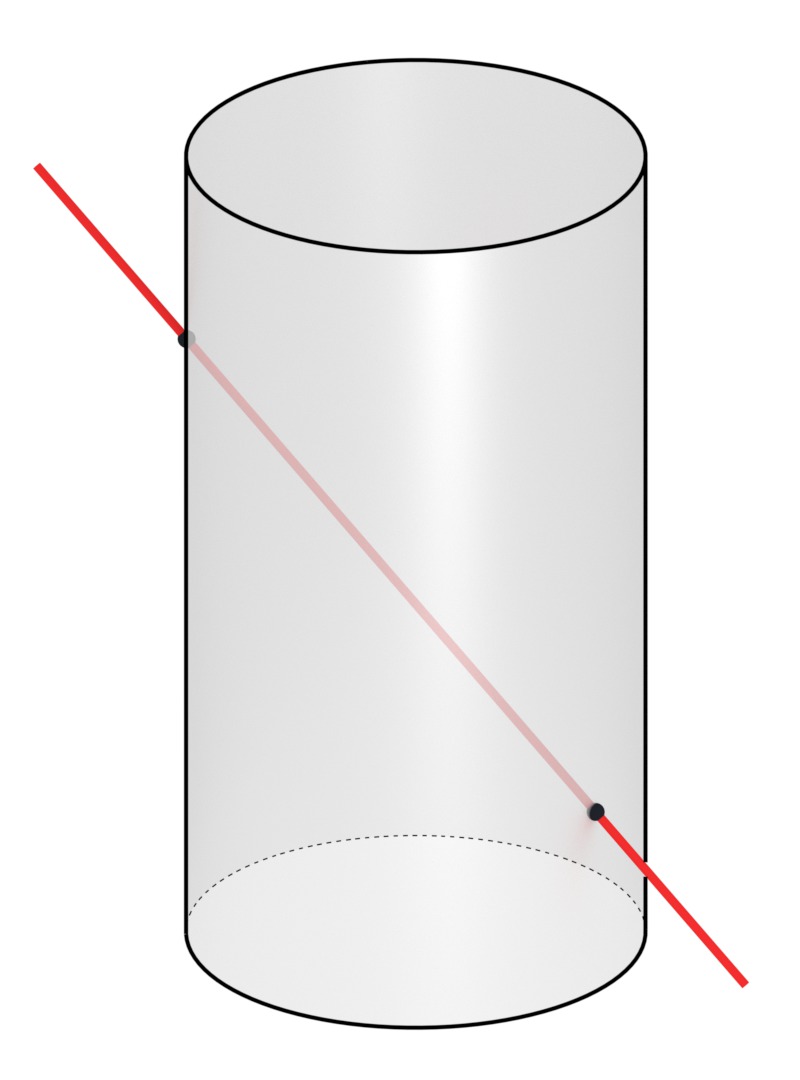}};
      \begin{scriptsize}
        \draw              (2.8, 7.1) node {$\mathcal{Z}$};
        \draw [color=red]  (0.1, 5.9) node {$L$};
        \draw              (3.85, 1.65) node {$\ell_1$};
        \draw              (1.1, 4.8) node {$\ell_2$};
      \end{scriptsize}
    \end{tikzpicture}}
    \hspace{1cm}
    \raisebox{-0.5\height}{
    \begin{tikzpicture}[line cap=line join=round,>=stealth,x=1.0cm,y=1.0cm, scale=1]
      \node[anchor=south west,inner sep=0] (image) at (0,0) {\includegraphics[width=0.4233\textwidth]{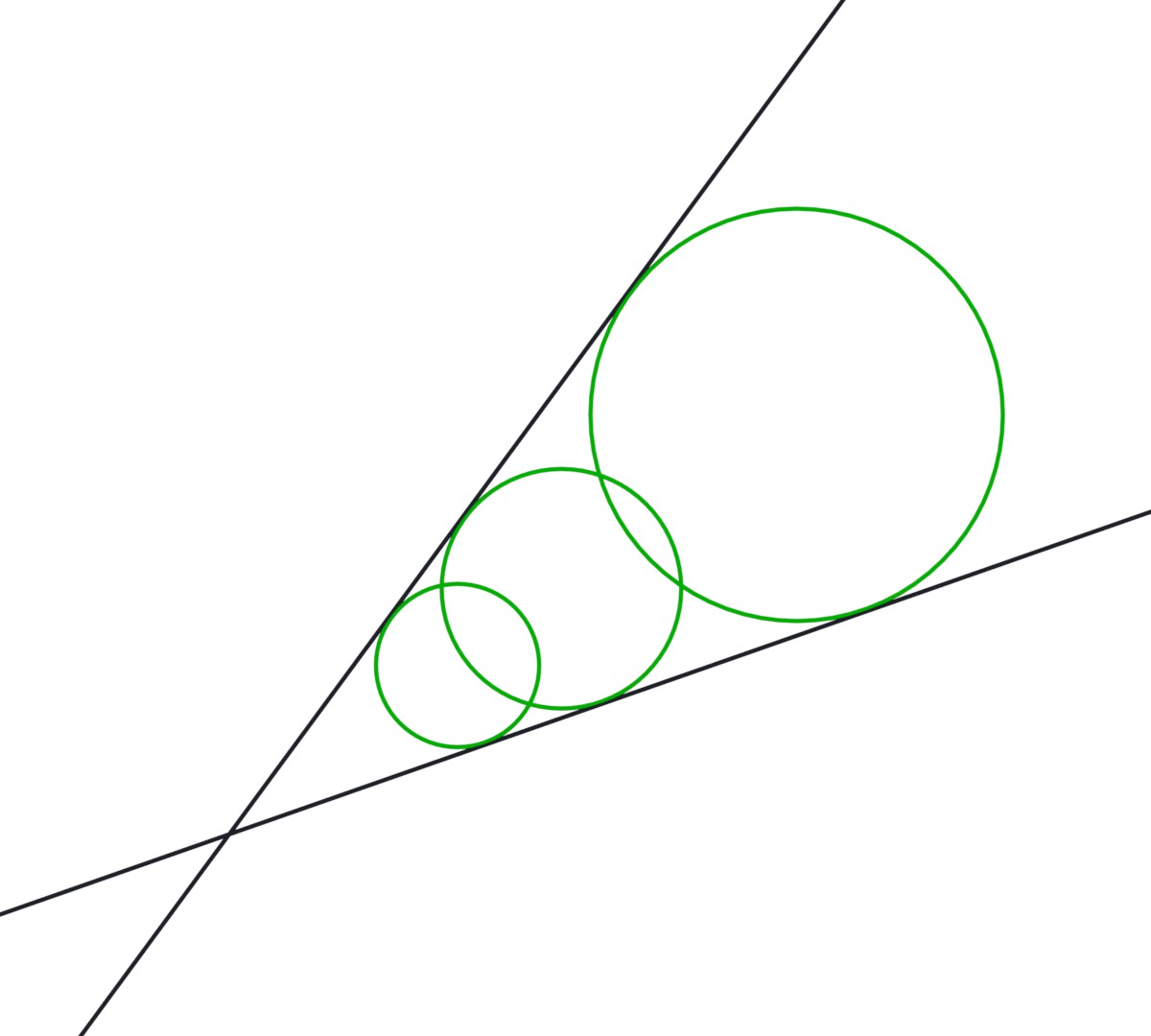}};
      \draw [<-, thick, draw opacity=0] (4.92, 6.03) -- (5.1,6.26);
      \draw [->, thick, draw opacity=0] (6.76,3.092) -- (6.8,3.11);
      \draw [->, thick, draw opacity=0, color=darkgreen] (6.54, 3.35) -- (5.65, 4.4);
      \draw [->, thick, draw opacity=0, color=darkgreen] (4.74, 2.0) -- (3.84, 3.1);
      \draw [->, thick, draw opacity=0, color=darkgreen] (3.89, 1.4) -- (3.05, 2.52);
      \begin{scriptsize}
        \draw (5.12, 6.28) node {$\ell_1$};
        \draw (6.9, 3.25) node {$\ell_2$};
      \end{scriptsize}
    \end{tikzpicture}}
\caption{Blaschke cylinder model: Oriented lines are points $\ell\in \cal Z$ and oriented circles are planes $S$ which do not intersect $\cal Z$ along its generators. Circles in oriented contact with two lines $\ell_1$ and $\ell_2$ are planes containing the line $L=(\ell_1,\ell_2)$.}
\label{f.Blaschke-cylinder}
\end{figure}

\subsection{The Laguerre geometry of checkerboard IC-nets}

As we have seen in Section~\ref{s.elementary}, the following incidence theorem is of crucial importance for the elementary construction of checkerboard IC-nets.

\begin{theorem} (Checkerboard incircles incidence theorem)
\label{t.IC-incidence}
Let $\ell_1,\ldots \ell_{6}$, $m_1,\ldots,m_6$ be 12 oriented lines which are in oriented contact with 12 oriented circles $S_1,\ldots,S_{12}$ as shown in Figure~\ref{f.IC-incidence} (top), corresponding to ``black'' quadrilaterals of a $5\times5$ checkerboard IC-net. In particular, the lines $\ell_1$, $\ell_2$, $m_1$, $m_2$ are in oriented contact with the circle $S_1$, the lines $\ell_3$, $\ell_4$, $m_1$, $m_2$ are in oriented contact with the circle $S_2$ etc. Then, the 13th ``black'' checkerboard quadrilateral also has an inscribed circle, i.e., the lines $\ell_5$, $\ell_6$, $m_5$, $m_6$ have a common circle $S_{13}$ in oriented contact.
\end{theorem}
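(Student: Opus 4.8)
The plan is to translate the whole statement into the Blaschke cylinder model and turn it into a purely projective incidence about coplanar points on a quadric. By the dictionary recalled above, each oriented line becomes a point of $\mathcal{Z}\subset\mathbb{P}^3$ --- I keep the names $\ell_1,\dots,\ell_6,m_1,\dots,m_6$ for these twelve points --- and the phrase ``four oriented lines are in oriented contact with a common oriented circle'' becomes ``the four corresponding points are coplanar''. Writing a line as $(\bv,d)$ with $|\bv|=1$, the latter is the vanishing of the determinant
\[
\det\begin{pmatrix} v_{1,1} & v_{1,2} & d_1 & 1\\ v_{2,1} & v_{2,2} & d_2 & 1\\ v_{3,1} & v_{3,2} & d_3 & 1\\ v_{4,1} & v_{4,2} & d_4 & 1\end{pmatrix}=0,
\]
which is exactly the solvability condition for the linear system $(\bc,\bv_k)_{\R^2}-d_k=r$, $k=1,\dots,4$, determining a common tangent circle $(\bc,r)$. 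Thus the hypothesis is that twelve prescribed quadruples among the twelve points of $\mathcal{Z}$ are coplanar, and the claim is that a thirteenth quadruple is. Concretely, the ``black'' quadrilaterals organise into a $3\times3$ grid pairing the blocks $\{\ell_{2i-1},\ell_{2i}\}$ with $\{m_{2j-1},m_{2j}\}$, $i,j\in\{1,2,3\}$, of which all but the corner $(3,3)$ are given, together with a $2\times2$ grid pairing the shifted blocks $\{\ell_{2i},\ell_{2i+1}\}$ with $\{m_{2j},m_{2j+1}\}$, $i,j\in\{1,2\}$. The decisive structural fact, which makes such a closure possible at all, is that all twelve points lie on the single quadric $\mathcal{Z}$.

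For the incidence itself I would exploit the secant lines of $\mathcal{Z}$. Coplanarity of $\ell_a,\ell_b,m_c,m_d$ is equivalent to the two chords $\overline{\ell_a\ell_b}$ and $\overline{m_cm_d}$ of $\mathcal{Z}$ meeting in $\mathbb{P}^3$. Setting $a_i=\overline{\ell_{2i-1}\ell_{2i}}$ and $b_j=\overline{m_{2j-1}m_{2j}}$, the three (generically skew) chords $a_1,a_2,a_3$ lie on a unique quadric $Q$ and form one of its rulings, and the two complete columns of the grid say that $b_1$ and $b_2$ meet $a_1,a_2,a_3$, hence lie in the opposite ruling of $Q$. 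What remains is to show that the last chord $b_3$, which is only known to meet $a_1$ and $a_2$, in fact also lies in this ruling, i.e.\@ meets $a_3$. I would derive this by a pencil-of-quadrics / Cayley--Bacharach argument: form the degenerate quadrics given by products of pairs of the twelve incidence planes, together with $\mathcal{Z}$ itself, all of which vanish at the twelve points, and apply residuation within the resulting pencil to force the missing coplanarity. Equivalently --- and this is the most robust route --- I would choose a rational parametrisation of $\mathcal{Z}$, so that each coplanarity becomes the vanishing of an explicit $4\times4$ determinant (a multi-ratio-type expression in the parameters), and verify that the thirteenth determinant is expressible through the other twelve modulo the quadric relation defining $\mathcal{Z}$.

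The main obstacle is precisely this last bookkeeping: ensuring that the count is exactly right, so that the hypotheses force the thirteenth relation and nothing stronger. For twelve points in general position in $\mathbb{P}^3$ carrying twelve coplanarities no thirteenth is implied, so the hypothesis that all points lie on the one quadric $\mathcal{Z}$ must enter essentially; and it is exactly the four ``shifted'' $2\times2$ coplanarities (the Type-A quadrilaterals) that supply the extra relations pinning $b_3$ into the regulus of $Q$. Verifying that the chosen auxiliary quadrics impose the correctly dependent conditions --- equivalently, that the determinantal identity holds identically in the parameters --- is the delicate computational heart; the reduction to coplanarity on $\mathcal{Z}$ and the regulus/pencil setup around it are the routine parts.
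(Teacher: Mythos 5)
Your setup coincides exactly with the paper's: pass to the Blaschke cylinder, replace ``four lines tangent to an oriented circle'' by ``four coplanar points of $\mathcal{Z}$'', replace each coplanarity by the intersection of two chords, let the three chords $a_1,a_2,a_3$ (the paper's $L_1,L_3,L_5$) span a quadric $Q=\mathcal{H}$, and observe that $b_1,b_2$ (the paper's $M_1,M_3$) lie in the opposite ruling, so that everything reduces to showing $b_3=M_5$ meets $a_3=L_5$, i.e.\ $M_5\subset\mathcal{H}$. Up to that point the proposal is correct and is the paper's argument verbatim. But the decisive step --- how the four shifted coplanarities ($S_4,S_5,S_9,S_{10}$) force $b_3$ into the regulus --- is exactly where you stop: you gesture at ``a pencil-of-quadrics / Cayley--Bacharach residuation'' and at a brute-force determinantal identity, and you yourself flag this as the unverified ``computational heart''. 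Neither route is carried out, and the Cayley--Bacharach framing is not obviously applicable as stated (the twelve points are not a complete intersection, and products of pairs of incidence planes do not in general lie in a single pencil with $\mathcal{Z}$), so this is a genuine gap rather than a routine omission.

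The missing idea is the paper's Lemma~\ref{l.quadric_in_pencil_through_line}: for the pencil spanned by $\mathcal{Z}$ and $\mathcal{H}$, any chord joining two base points (points of $\mathcal{Z}\cap\mathcal{H}$) lies on a unique quadric of the pencil, and any further chord of base points meeting it lies on the \emph{same} quadric. With this, one introduces the auxiliary quadric $\tilde{\mathcal{H}}$ of the pencil containing the \emph{odd} chord $L_2=(\ell_2,\ell_3)$ and propagates through the shifted incidences: $m_2,m_3\in\mathcal{Z}\cap\mathcal{H}\subset\tilde{\mathcal{H}}$ together with $L_2\cap M_2\in\tilde{\mathcal{H}}$ (circle $S_4$) gives $M_2\subset\tilde{\mathcal{H}}$; then $S_5$ gives $L_4\subset\tilde{\mathcal{H}}$; then $S_9,S_{10}$ give $M_4\subset\tilde{\mathcal{H}}$, whence $m_5\in\tilde{\mathcal{H}}\cap\mathcal{Z}=\mathcal{H}\cap\mathcal{Z}$, so $m_5\in\mathcal{H}$. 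Now $M_5$ contains the three points $m_5$, $L_1\cap M_5$ (circle $S_{11}$) and $L_3\cap M_5$ (circle $S_{12}$) of $\mathcal{H}$, hence $M_5\subset\mathcal{H}$ and $M_5$ meets $L_5$. This chain is precisely the accounting you identified as delicate; without it (or an actually executed identity in a parametrisation of $\mathcal{Z}$), the proof is incomplete.
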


\begin{figure}
  \begin{center}
      \begin{tikzpicture}[line cap=line join=round,>=stealth,x=1.0cm,y=1.0cm, scale=1.2]
        \begin{scriptsize}
          \draw (0, -0.8) node {$\ell_1$};
          \draw (1, -0.8) node {$\ell_2$};
          \draw (2, -0.8) node {$\ell_3$};
          \draw (3, -0.8) node {$\ell_4$};
          \draw (4, -0.8) node {$\ell_5$};
          \draw (5, -0.8) node {$\ell_6$};
          \draw (-0.8, 0) node {$m_1$};
          \draw (-0.8, 1) node {$m_2$};
          \draw (-0.8, 2) node {$m_3$};
          \draw (-0.8, 3) node {$m_4$};
          \draw (-0.8, 4) node {$m_5$};
          \draw (-0.8, 5) node {$m_6$};
          \draw [color=blue] (0.5, -0.6) node {$L_1$};
          \draw [color=red]  (1.5, -0.6) node {$L_2$};
          \draw [color=blue] (2.5, -0.6) node {$L_3$};
          \draw [color=red]  (3.5, -0.6) node {$L_4$};
          \draw [color=blue] (4.5, -0.6) node {$L_5$};
          \draw [color=blue] (-0.6, 0.5) node {$M_1$};
          \draw [color=red]  (-0.6, 1.5) node {$M_2$};
          \draw [color=blue] (-0.6, 2.5) node {$M_3$};
          \draw [color=red]  (-0.6, 3.5) node {$M_4$};
          \draw [color=blue] (-0.6, 4.5) node {$M_5$};
        \end{scriptsize}
        \clip(-0.5,-0.5) rectangle (6,6);
        \draw [line width=0.8pt, color=red,  domain=-0.3:5.3] plot(1.5,\x);
        \draw [line width=0.8pt, color=red,  domain=-0.3:5.3] plot(3.5,\x);
        \draw [line width=0.8pt, color=red,  domain=-0.3:5.3] plot(\x,1.5);
        \draw [line width=0.8pt, color=red,  domain=-0.3:5.3] plot(\x,3.5);
        \draw [line width=0.8pt, color=blue, domain=-0.3:5.3] plot(0.5,\x);
        \draw [line width=0.8pt, color=blue, domain=-0.3:5.3] plot(2.5,\x);
        \draw [line width=0.8pt, color=blue, domain=-0.3:5.3] plot(4.5,\x);
        \draw [line width=0.8pt, color=blue, domain=-0.3:5.3] plot(\x,0.5);
        \draw [line width=0.8pt, color=blue, domain=-0.3:5.3] plot(\x,2.5);
        \draw [line width=0.8pt, color=blue, domain=-0.3:5.3] plot(\x,4.5);
        \draw [line width=1pt, domain=-0.5:5.5] plot(0,\x);
        \draw [>-, thick] (0,-0.4) -- (0,-0.3);
        \draw [line width=1pt, domain=-0.5:5.5] plot(1,\x);
        \draw [<-, thick] (1,-0.4) -- (1,-0.3);
        \draw [line width=1pt, domain=-0.5:5.5] plot(2,\x);
        \draw [>-, thick] (2,-0.4) -- (2,-0.3);
        \draw [line width=1pt, domain=-0.5:5.5] plot(3,\x);
        \draw [<-, thick] (3,-0.4) -- (3,-0.3);
        \draw [line width=1pt, domain=-0.5:5.5] plot(4,\x);
        \draw [>-, thick] (4,-0.4) -- (4,-0.3);
        \draw [line width=1pt, domain=-0.5:5.5] plot(5,\x);
        \draw [<-, thick] (5,-0.4) -- (5,-0.3);
        \draw [line width=1pt, domain=-0.5:5.5] plot(\x,0);
        \draw [<-, thick] (-0.4,0) -- (-0.3,0);
        \draw [line width=1pt, domain=-0.5:5.5] plot(\x,1);
        \draw [>-, thick] (-0.4,1) -- (-0.3,1);
        \draw [line width=1pt, domain=-0.5:5.5] plot(\x,2);
        \draw [<-, thick] (-0.4,2) -- (-0.3,2);
        \draw [line width=1pt, domain=-0.5:5.5] plot(\x,3);
        \draw [>-, thick] (-0.4,3) -- (-0.3,3);
        \draw [line width=1pt, domain=-0.5:5.5] plot(\x,4);
        \draw [<-, thick] (-0.4,4) -- (-0.3,4);
        \draw [line width=1pt, domain=-0.5:5.5] plot(\x,5);
        \draw [>-, thick] (-0.4,5) -- (-0.3,5);
        \draw [->, thick] (1.0,0.5) arc (0:-685:0.5);
        \draw [->, thick] (3.0,0.5) arc (0:-685:0.5);
        \draw [->, thick] (5.0,0.5) arc (0:-685:0.5);
        \draw [->, thick] (1.0,2.5) arc (0:-685:0.5);
        \draw [->, thick] (3.0,2.5) arc (0:-685:0.5);
        \draw [->, thick] (5.0,2.5) arc (0:-685:0.5);
        \draw [->, thick] (1.0,4.5) arc (0:-685:0.5);
        \draw [->, thick] (3.0,4.5) arc (0:-685:0.5);
        \draw [->, thick, dotted] (5.0,4.5) arc (0:-685:0.5);
        \draw [->, thick] (2.0,1.5) arc (0:415:0.5);
        \draw [->, thick] (4.0,1.5) arc (0:415:0.5);
        \draw [->, thick] (2.0,3.5) arc (0:415:0.5);
        \draw [->, thick] (4.0,3.5) arc (0:415:0.5);
        \begin{tiny}
          \def\co{0.2}
          \draw ({0.5+\co},{0.5+\co}) node {$S_1$};
          \draw ({2.5+\co},{0.5+\co}) node {$S_2$};
          \draw ({4.5+\co},{0.5+\co}) node {$S_3$};
          \draw ({1.5+\co},{1.5+\co}) node {$S_4$};
          \draw ({3.5+\co},{1.5+\co}) node {$S_5$};
          \draw ({0.5+\co},{2.5+\co}) node {$S_6$};
          \draw ({2.5+\co},{2.5+\co}) node {$S_7$};
          \draw ({4.5+\co},{2.5+\co}) node {$S_8$};
          \draw ({1.5+\co},{3.5+\co}) node {$S_9$};
          \draw ({3.5+\co},{3.5+\co}) node {$S_{10}$};
          \draw ({0.5+\co},{4.5+\co}) node {$S_{11}$};
          \draw ({2.5+\co},{4.5+\co}) node {$S_{12}$};
          \draw ({4.5+\co},{4.5+\co}) node {$S_{13}$};
        \end{tiny}
      \end{tikzpicture}
      
      \vspace{0.3cm}
      
      \raisebox{-0.5\height}{
      \begin{tikzpicture}[line cap=line join=round,>=stealth,x=1.0cm,y=1.0cm, scale=0.9]
        \begin{scriptsize}
          \draw [color=blue] (0.5, -1.1) node {$L_1$};
          \draw [color=red]  (1.5, -1.1) node {$M_2$};
          \draw [color=blue] (2.5, -1.1) node {$L_3$};
          \draw [color=red]  (3.5, -1.1) node {$M_4$};
          \draw [color=blue] (4.5, -1.1) node {$L_5$};
          \draw [color=blue] (-1.1, 0.5) node {$M_1$};
          \draw [color=red]  (-1.1, 1.5) node {$L_2$};
          \draw [color=blue] (-1.1, 2.5) node {$M_3$};
          \draw [color=red]  (-1.1, 3.5) node {$L_4$};
          \draw [color=blue] (-1.1, 4.5) node {$M_5$};
          \draw [line width=1pt, color=red,  domain=-0.8:5.8] plot(1.5,\x);
          \draw [line width=1pt, color=red,  domain=-0.8:5.8] plot(3.5,\x);
          \draw [line width=1pt, color=red,  domain=-0.8:5.8] plot(\x,1.5);
          \draw [line width=1pt, color=red,  domain=-0.8:5.8] plot(\x,3.5);
          \draw [line width=1pt, color=blue, domain=-0.8:5.8] plot(0.5,\x);
          \draw [line width=1pt, color=blue, domain=-0.8:5.8] plot(2.5,\x);
          \draw [line width=1pt, color=blue, domain=-0.8:5.8] plot(4.5,\x);
          \draw [line width=1pt, color=blue, domain=-0.8:5.8] plot(\x,0.5);
          \draw [line width=1pt, color=blue, domain=-0.8:5.8] plot(\x,2.5);
          \draw [line width=1pt, color=blue, domain=-0.8:5.8] plot(\x,4.5);
          \coordinate[label=-45:$\ell_1$] (l1) at (0.5,-0.5);
          \fill (l1) circle (2pt);
          \coordinate[label=-45:$\ell_2$] (l2) at (0.5,1.5);
          \fill (l2) circle (2pt);
          \coordinate[label=-45:$\ell_3$] (l3) at (2.5,1.5);
          \fill (l3) circle (2pt);
          \coordinate[label=-45:$\ell_4$] (l4) at (2.5,3.5);
          \fill (l4) circle (2pt);
          \coordinate[label=-45:$\ell_5$] (l5) at (4.5,3.5);
          \fill (l5) circle (2pt);
          \coordinate[label=-45:$\ell_6$] (l6) at (4.5,5.5);
          \fill (l6) circle (2pt);
          \coordinate[label=-45:$m_1$] (m1) at (-0.5,0.5);
          \fill (m1) circle (2pt);
          \coordinate[label=-45:$m_2$] (m2) at (1.5,0.5);
          \fill (m2) circle (2pt);
          \coordinate[label=-45:$m_3$] (m3) at (1.5,2.5);
          \fill (m3) circle (2pt);
          \coordinate[label=-45:$m_4$] (m4) at (3.5,2.5);
          \fill (m4) circle (2pt);
          \coordinate[label=-45:$m_5$] (m5) at (3.5,4.5);
          \fill (m5) circle (2pt);
          \coordinate[label=-45:$m_6$] (m6) at (5.5,4.5);
          \fill (m6) circle (2pt);
          \draw (0.5,0.5) circle (3pt);
          \draw (2.5,0.5) circle (3pt);
          \draw (4.5,0.5) circle (3pt);
          \draw (1.5,1.5) circle (3pt);
          \draw (3.5,1.5) circle (3pt);
          \draw (0.5,2.5) circle (3pt);
          \draw (2.5,2.5) circle (3pt);
          \draw (4.5,2.5) circle (3pt);
          \draw (1.5,3.5) circle (3pt);
          \draw (3.5,3.5) circle (3pt);
          \draw (0.5,4.5) circle (3pt);
          \draw (2.5,4.5) circle (3pt);
          \draw (4.5,4.5) circle (3pt);
        \end{scriptsize}
      \end{tikzpicture}}
    \raisebox{-0.5\height}{
      \begin{tikzpicture}[line cap=line join=round,>=stealth,x=1.0cm,y=1.0cm, scale=1]
        \node[anchor=south west,inner sep=0] (image) at (0,0) {\includegraphics[width=0.5\textwidth]{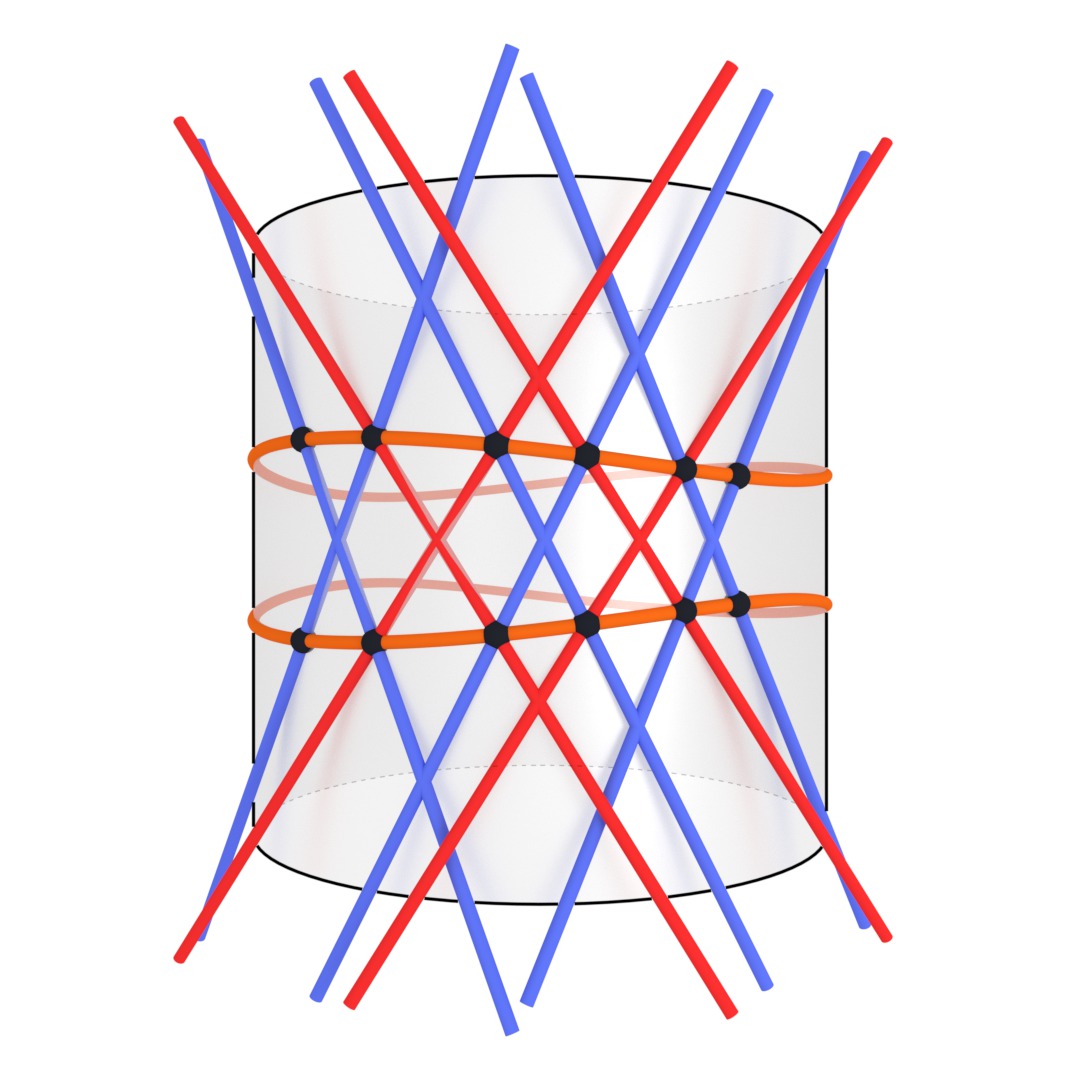}};
        \begin{scriptsize}
          \coordinate[label={[label distance=0.1cm]-90:\contour{white}{$\ell_1$}}] (l1) at (2.25,3.25);
          \coordinate[label={[label distance=0.1cm]90:\contour{white}{$\ell_2$}}] (l2) at (2.75,4.75);
          \coordinate[label={[label distance=0.1cm]-90:\contour{white}{$\ell_3$}}] (l3) at (3.6875,3.3125);
          \coordinate[label={[label distance=0.1cm]90:\contour{white}{$\ell_4$}}] (l4) at (4.375,4.625);
          \coordinate[label={[label distance=0.1cm]-90:\contour{white}{$\ell_5$}}] (l5) at (5.1025,3.4375);
          \coordinate[label={[label distance=0.1cm]90:\contour{white}{$\ell_6$}}] (l6) at (5.5,4.5);
          \coordinate[label={[label distance=0.1cm]90:\contour{white}{$m_1$}}] (m1) at (2.25,4.75);
          \coordinate[label={[label distance=0.1cm]-90:\contour{white}{$m_2$}}] (m2) at (2.75,3.25);
          \coordinate[label={[label distance=0.1cm]90:\contour{white}{$m_3$}}] (m3) at (3.6875,4.75);
          \coordinate[label={[label distance=0.1cm]-90:\contour{white}{$m_4$}}] (m4) at (4.375,3.375);
          \coordinate[label={[label distance=0.1cm]90:\contour{white}{$m_5$}}] (m5) at (5.0625,4.5);
          \coordinate[label={[label distance=0.1cm]-90:\contour{white}{$m_6$}}] (m6) at (5.55,3.5);
      \end{scriptsize}
        \end{tikzpicture}}
  \end{center}
  \caption{
    Checkerboard incircles incidence theorem.
    \emph{Top:}  Existence of the last circle $S_{13}$.
    \emph{Bottom:} The Blaschke cylinder description.
    Pairs of intersecting lines $L_i,M_k$ correspond to circles,
    whereas the points of intersection of the lines $L_i,L_{i+1}$ and $M_k,M_{k+1}$ encapsulate
    the common oriented lines $\ell_{i+1}$ and $m_{k+1}$ respectively.
  }
\label{f.IC-incidence}
\label{f.Blaschke_incidence}
\end{figure}

This theorem was originally proven in \cite{AB}. Here, we give a slightly simpler proof which is also instrumental in the explicit integration of the nets. We assume that, modulo the existence of the 12 oriented circles, the 12 oriented lines are in general position. We begin  with a simple lemma used in the proof.
\begin{lemma}
\label{l.quadric_in_pencil_through_line}
Let $p_1,p_2$ be two points which belong to all members of a pencil of quadrics $Q_t$. Then, there exists a unique quadric $Q_{t_{12}}$ from the pencil which contains the whole line \mbox{$L_{12}=(p_1,p_2)$.} If the line $L_{34}=(p_3,p_4)$ associated with another pair of common points $p_3,p_4$ intersects the line $L_{12}$ then the two quadrics  $Q_{t_{12}}$ and $Q_{t_{34}}$ coincide.
\end{lemma}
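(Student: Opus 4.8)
The plan is to represent the pencil by a pair of symmetric matrices and reduce everything to the elementary fact that a binary quadratic form with three distinct zeros vanishes identically. Concretely, I would write $Q_t$ as the quadric $\{x : x^\top (A+tB)\,x = 0\}$ for fixed symmetric matrices $A,B$, and set $\langle x,y\rangle_t := x^\top (A+tB)\,y$ for the associated bilinear form (working in homogeneous coordinates, so that $t$ is really a projective parameter $[\lambda:\mu]$ and $Q_t = \lambda A + \mu B$). The hypothesis that $p_1,p_2$ lie on \emph{every} member then reads $\langle p_i,p_i\rangle_t = 0$ for all $t$ and $i=1,2$, i.e. $p_i^\top A\,p_i = p_i^\top B\,p_i = 0$.

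The first assertion follows by restricting the pencil to $L_{12}$. Writing a general point of that line as $x = \alpha p_1 + \beta p_2$ and expanding, the two diagonal contributions vanish by the hypothesis, leaving $\langle x,x\rangle_t = 2\alpha\beta\,\langle p_1,p_2\rangle_t$. Hence $Q_t \supset L_{12}$ if and only if $\langle p_1,p_2\rangle_t = p_1^\top A\,p_2 + t\,p_1^\top B\,p_2 = 0$, a single (projectively homogeneous) linear equation in $t$. As long as the coefficient function $t\mapsto\langle p_1,p_2\rangle_t$ does not vanish identically, this equation has exactly one solution $t_{12}$, which gives both existence and uniqueness. For the second assertion I would set $q = L_{12}\cap L_{34}$. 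Since $Q_{t_{12}}\supset L_{12}$ we get $q\in Q_{t_{12}}$, and since $p_3,p_4$ are common points of the pencil they lie on $Q_{t_{12}}$ as well. Thus the restriction of the quadratic form $x\mapsto x^\top(A+t_{12}B)\,x$ to the line $L_{34}$ is a binary quadratic form vanishing at the three points $p_3,p_4,q$; being of degree $\le 2$ with three distinct zeros, it vanishes identically, so $L_{34}\subset Q_{t_{12}}$. By the uniqueness from the first part, applied now to the pair $p_3,p_4$, the member $Q_{t_{12}}$ must coincide with the unique quadric $Q_{t_{34}}$ containing $L_{34}$.

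The computations above are immediate; the real care lies in the non-degeneracy, and I expect the main obstacle to be the degenerate configurations rather than the core argument. Two points must be excluded: first, that $t\mapsto\langle p_1,p_2\rangle_t$ vanishes identically, which would mean the whole line $L_{12}$ sits in the base locus of the pencil and would destroy uniqueness; and second, that the intersection point $q$ coincides with one of $p_3,p_4$, which would collapse the ``three distinct zeros'' step. Both are ruled out by the general-position assumption stated before the lemma (for a generic pencil of quadric surfaces the base curve contains no line, and the intersection $q$ of two chords is distinct from the base points), so the binary-form argument applies as intended.
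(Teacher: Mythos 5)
Your proposal is correct and follows essentially the same route as the paper: both reduce the first claim to the vanishing of the polar form $q_t(v_1,v_2)$ of the pencil evaluated at the two base points, which is linear in the (projective) pencil parameter, and both obtain the second claim from the fact that a line meeting a quadric in three points lies on it. Your treatment is marginally more explicit about the degenerate configurations (identically vanishing polar form, intersection point coinciding with a base point), which the paper subsumes under its general-position assumption, but the substance of the argument is identical.
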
 
\begin{proof}
Even though we will apply this lemma to quadrics in $\mathbb{R}^3$, we will prove it in its natural projective setting. Thus,
let $q_1,q_2$ be two quadratic forms generating the pencil with the quadratic form $q_t=q_1+tq_2$. The points $p_1=[v_1]$, $p_2=[v_2]$ with $v_1$ and $v_2$ being homogeneous coordinates belong to all quadrics of the pencil iff $q_1(v_1)=q_1(v_2)=q_2(v_1)=q_2(v_2)=0$. The line $L_{12}=(p_1,p_2)$ belongs to the quadric determined by $q_{t_{12}}$ iff $q_{t_{12}}(v_1,v_2)=0$ so that $t_{12}=-\frac{q_1(v_1,v_2)}{q_2(v_1,v_2)}$. Vanishing of the denominator is the case when the line lies on the quadric determined by $q_2$. Moreover, if the line $L_{34}=(p_3,p_4)$ passing through another pair of common points $p_3,p_4$ intersects the line $L_{12}$ then the point of intersection and $p_3,p_4$ belong to the quadric $Q_{t_{12}}$. Accordingly, the line $L_{34}$ is contained in $Q_{t_{12}}$ so that $Q_{t_{12}}=Q_{t_{34}}$. 
\end{proof}

\begin{proof}[Proof of Theorem \ref{t.IC-incidence}]
Here, it is convenient to interpret the statement of the theorem in terms of the Blaschke cylinder model. Thus, as explained above the lines $L_i$ intersecting the Blaschke cylinder $\cal Z$ in the points $\ell_i$ and $\ell_{i+1}$ describe one-parameter sets of circles in oriented contact with the lines $\ell_i$ and $\ell_{i+1}$. Some of the lines $L_i$ and $M_k$ intersect. For instance, the one-parameter families of circles corresponding to $L_1$ and $M_1$ contain the common circle $S_1$, corresponding to the plane determined by $L_1$ and $M_1$. We obtain the incidence picture shown in Figure~\ref{f.Blaschke_incidence} (bottom), wherein the points of intersection of the relevant pairs $L_i, M_k$ are indicated by small circles. Moreover the lines $L_i$ and $L_{i+1}$ intersect since they pass through the same points in $\cal Z$ and so do the lines $M_k$ and $M_{k+1}$. The points of intersection correspond to the lines $\ell_{i+1}$ and $m_{k+1}$ respectively. For example $\ell_2=L_1\cap L_2$.

In order to prove the existence of the circle $S_{13}$, we have to show that the lines $L_5$ and $M_5$ intersect. We first note that the three lines $L_1, L_3, L_5$ determine a hyperboloid ${\cal H}\subset \R^3$ and recall that a line which intersects a quadric in three points is contained in the quadric.  Accordingly, since $M_1$ and $M_3$ intersect the lines $L_1, L_3, L_5$, these are contained in $\mathcal{H}$. The intersection of the lines $L_5$ and $M_5$ is equivalent to the inclusion $M_5\subset {\cal H}$. The latter property may be proven as follows.

Since $\ell_2, \ell_3$ belong to both quadrics ${\cal Z},{\cal H}$, Lemma~\ref{l.quadric_in_pencil_through_line} implies that there exists a unique quadric $\tilde{\cal H}$ in the pencil of quadrics generated by ${\cal H}$ and ${\cal Z}$ which contains the whole line $L_2$. We now consider the three points $m_2, m_3, L_2\cap M_2$ of $M_2$. Since $m_2$ and $m_3$ are common to $\mathcal{H}$ and $\mathcal{Z}$, these are contained in $\tilde{\mathcal{H}}$. Hence, $m_2, m_3, L_2\cap M_2\in\tilde{\mathcal{H}}$ so that $M_2\subset\tilde{\mathcal{H}}$. This implies, in turn, that $\ell_4,\ell_5,L_4\cap M_2\in \tilde{\mathcal{H}}$ and, hence, $L_4\subset \tilde{\mathcal{H}}$. Consequently, $m_4, L_2\cap M_4, L_4\cap M_4\in \tilde{\mathcal{H}}$ so that $M_4\subset \tilde{\mathcal{H}}$. In particular, $m_5$ lies in $\tilde{\mathcal{H}}$ (and $\mathcal{Z}$) and, therefore, in $\mathcal{H}$. Moreover, $L_1\cap M_5$ and $L_3\cap M_5$ lie in $\mathcal{H}$ which finally implies that $M_5\subset \mathcal{H}$.
\end{proof}

It is observed that iterative application of Theorem \ref{t.IC-incidence} leads to the unique construction of an arbitrarily large checkerboard IC-net with lines $L_n$ and $M_n$, $n\in\Z$. The hyperboloids $\cal H$ and $\tilde{\cal H}$ as constructed above then contain all lines $L_{2k+1},M_{2k+1}$  and $L_{2k},M_{2k}$ respectively. Thus, we have come to the important conclusion that a checkerboard IC-net encodes two quadrics which belong to a pencil containing the Blaschke cylinder $\mathcal{Z}$.
 
\begin{corollary}
In the Blaschke cylinder model, the lines $L_{2k+1}=(\ell_{2k+1},\ell_{2k+2})$, $M_{2k+1}=(m_{2k+1},m_{2k+2})$ and $L_{2k}=(\ell_{2k},\ell_{2k+1})$, $M_{2k}=(m_{2k},m_{2k+1})$ associated with a checkerboard IC-net ``in general position'' are generators of hyperboloids $\cal H$ and $\tilde{\cal H}$ respectively which belong to a pencil of quadrics containing the Blaschke cylinder $\cal Z$.
\end{corollary}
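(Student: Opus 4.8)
The plan is to read the corollary directly off the construction carried out in the proof of Theorem~\ref{t.IC-incidence}, and then to argue that the same construction, applied to every window of the net, forces all odd-indexed lines onto a single quadric and all even-indexed lines onto a single quadric of the same pencil. The two ingredients are the classical fact that three pairwise skew lines in $\R^3$ lie on a unique ruled quadric, of which they are generators of one ruling, together with the principle (already used above) that a line meeting a quadric in three points is contained in it.

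First I would fix the quadric $\mathcal{H}$. Since the net is in general position, $L_1,L_3,L_5$ are pairwise skew and thus determine a unique hyperboloid $\mathcal{H}$ having them as generators of one ruling. The lines $M_1$ and $M_3$ each meet all three of $L_1,L_3,L_5$ (at the points representing the circles $S_i$), so $M_1,M_3\subset\mathcal{H}$ as generators of the complementary ruling; conversely every odd line $L_{2k+1}$ meets these $M$-lines and hence also lies on $\mathcal{H}$. Iterating the argument of the theorem to the left and to the right, each newly constructed $L_{2k+1}$ meets three already-placed generators of the $M$-ruling, and each $M_{2k+1}$ meets three generators of the $L$-ruling; by the three-point containment principle every such line lies on $\mathcal{H}$. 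Thus $\{L_{2k+1}\}_{k\in\Z}$ and $\{M_{2k+1}\}_{k\in\Z}$ are precisely the two rulings of the single hyperboloid $\mathcal{H}$.

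Next I would produce $\tilde{\mathcal{H}}$. The points $\ell_2=L_1\cap L_2$ and $\ell_3=L_2\cap L_3$ lie on $\mathcal{H}$ (they sit on the generators $L_1,L_3$) and on $\mathcal{Z}$ (being points of the Blaschke cylinder), hence they are common to the whole pencil generated by $\mathcal{H}$ and $\mathcal{Z}$. Lemma~\ref{l.quadric_in_pencil_through_line} then yields a unique member $\tilde{\mathcal{H}}$ of that pencil containing the even line $L_2=(\ell_2,\ell_3)$. Running the chain of the theorem proof, namely $m_2,m_3,L_2\cap M_2\in\tilde{\mathcal{H}}$ gives $M_2\subset\tilde{\mathcal{H}}$, which forces $L_4\subset\tilde{\mathcal{H}}$, then $M_4\subset\tilde{\mathcal{H}}$, and so on in both directions, places every even line $L_{2k},M_{2k}$ on $\tilde{\mathcal{H}}$. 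By construction $\mathcal{H}$ and $\mathcal{Z}$ span the pencil and $\tilde{\mathcal{H}}$ lies in it, so all three quadrics belong to one pencil, as claimed.

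I expect the only real subtleties to be bookkeeping rather than conceptual. The main point requiring care is the consistency of the iteration: one must check that extending the net never demands a quadric other than the fixed $\mathcal{H}$ (respectively $\tilde{\mathcal{H}}$), which is exactly what uniqueness of the quadric through three skew lines guarantees, and that general position keeps the relevant triples skew, so that $\mathcal{H}$ is a genuine nondegenerate hyperboloid with $\mathcal{H}\neq\mathcal{Z}$ (and $\tilde{\mathcal{H}}\neq\mathcal{Z}$), ensuring the pencil is well defined. No computation beyond that already present in the proof of Theorem~\ref{t.IC-incidence} should be needed.
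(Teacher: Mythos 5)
Your proposal is correct and follows essentially the same route as the paper: the paper derives this corollary directly from the proof of Theorem~\ref{t.IC-incidence}, where $\mathcal{H}$ is the quadric through $L_1,L_3,L_5$ (containing the $M_{2k+1}$ by the three-point containment principle) and $\tilde{\mathcal{H}}$ is the member of the pencil through $\mathcal{H}$ and $\mathcal{Z}$ containing $L_2$ obtained from Lemma~\ref{l.quadric_in_pencil_through_line}, with the chain $M_2,L_4,M_4,\dots$ propagated exactly as you describe. The iterative extension to all $k\in\Z$ is likewise the paper's stated mechanism, so no substantive difference remains.
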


For future reference we denote the common curve of intersection of the above-mentioned quadrics by
$$
{\cal C}={\cal H}\cap {\cal Z}={\tilde{\cal H}}\cap {\cal Z}={\cal H}\cap{\tilde{\cal H}}\cap {\cal Z}.
$$

\begin{definition}
A (non-empty) curve of intersection of the Blaschke cylinder with a quadric is called a {\em hypercycle base curve}.
\end{definition}

The straight lines corresponding to the points of a hypercycle base curve are tangent to a curve in the plane which is generically of degree 8.
This planar curve is called a {\em hypercycle} \cite{Blaschke}. It is noted that a hypercycle base curve and the corresponding hypercycle are merely two different incarnations of the same object $\mathcal{C}$. In terms of this terminology, we have proven the following theorem (see Figure \ref{f.hypercycle}).

\begin{theorem}
The lines of a checkerboard IC-net are tangent to a hypercycle.
\end{theorem}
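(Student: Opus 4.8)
The plan is to reduce the statement to the correspondence between hypercycle base curves and hypercycles established just above: by definition, the oriented lines associated with the points of a hypercycle base curve $\mathcal{C}$ are exactly the lines tangent to the corresponding hypercycle. It therefore suffices to prove that every point $\ell_i$ and $m_j$ of the Blaschke cylinder $\mathcal{Z}$ attached to the net lies on one and the same base curve $\mathcal{C}$. For this I would invoke the preceding Corollary, which supplies the two hyperboloids $\mathcal{H}$ and $\tilde{\mathcal{H}}$ belonging to a pencil with $\mathcal{Z}$, together with the identification $\mathcal{C}=\mathcal{H}\cap\tilde{\mathcal{H}}\cap\mathcal{Z}$.

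The key step is the observation that each point of the net is an intersection of generators drawn from the two distinct hyperboloids. Recalling from the Corollary that $L_{2k}=(\ell_{2k},\ell_{2k+1})$ and $L_{2k+1}=(\ell_{2k+1},\ell_{2k+2})$, every point $\ell_n\in\mathcal{Z}$ is the intersection $\ell_n=L_{n-1}\cap L_n$ of two consecutive lines of opposite parity (as already noted in the proof of Theorem~\ref{t.IC-incidence}, e.g.\ $\ell_2=L_1\cap L_2$). By the Corollary one of these lines is a generator of $\mathcal{H}$ and the other a generator of $\tilde{\mathcal{H}}$, so $\ell_n$ lies on both hyperboloids; being a point of $\mathcal{Z}$ as well, it lies on $\mathcal{H}\cap\tilde{\mathcal{H}}\cap\mathcal{Z}=\mathcal{C}$. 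The identical argument applied to $m_n=M_{n-1}\cap M_n$ places every $m_n$ on $\mathcal{C}$. Hence all points of the net lie on the single hypercycle base curve $\mathcal{C}$, and the theorem follows from the correspondence recalled above.

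Since the construction of $\mathcal{H}$ and $\tilde{\mathcal{H}}$ in the Corollary presupposes the net to be ``in general position,'' I would first state the result in that generic regime and then remark that tangency to the (possibly degenerate) hypercycle persists in the limiting configurations by continuity. In truth there is no serious obstacle: essentially all the content is carried by the Corollary, and the only thing one must see clearly is that the net points $\ell_n,m_n$ --- being intersections of generators from the two different ruling families --- are automatically forced onto the base locus shared by $\mathcal{H}$, $\tilde{\mathcal{H}}$ and $\mathcal{Z}$, which is precisely the hypercycle base curve $\mathcal{C}$.
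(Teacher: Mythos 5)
Your argument is correct and is essentially the paper's own: the paper states this theorem immediately after the Corollary and the definition of hypercycle with the remark that it has thereby already been proven, the content being exactly that every $\ell_n$ and $m_n$ lies on $\mathcal{Z}$ and on a generator of $\mathcal{H}$ or $\tilde{\mathcal{H}}$, hence on the common base curve $\mathcal{C}$, whose points correspond by definition to the tangent lines of the hypercycle. Your write-up merely makes this reduction explicit, so no further comparison is needed.
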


\begin{figure}
  \centerline{
    \includegraphics[width=0.49\textwidth]{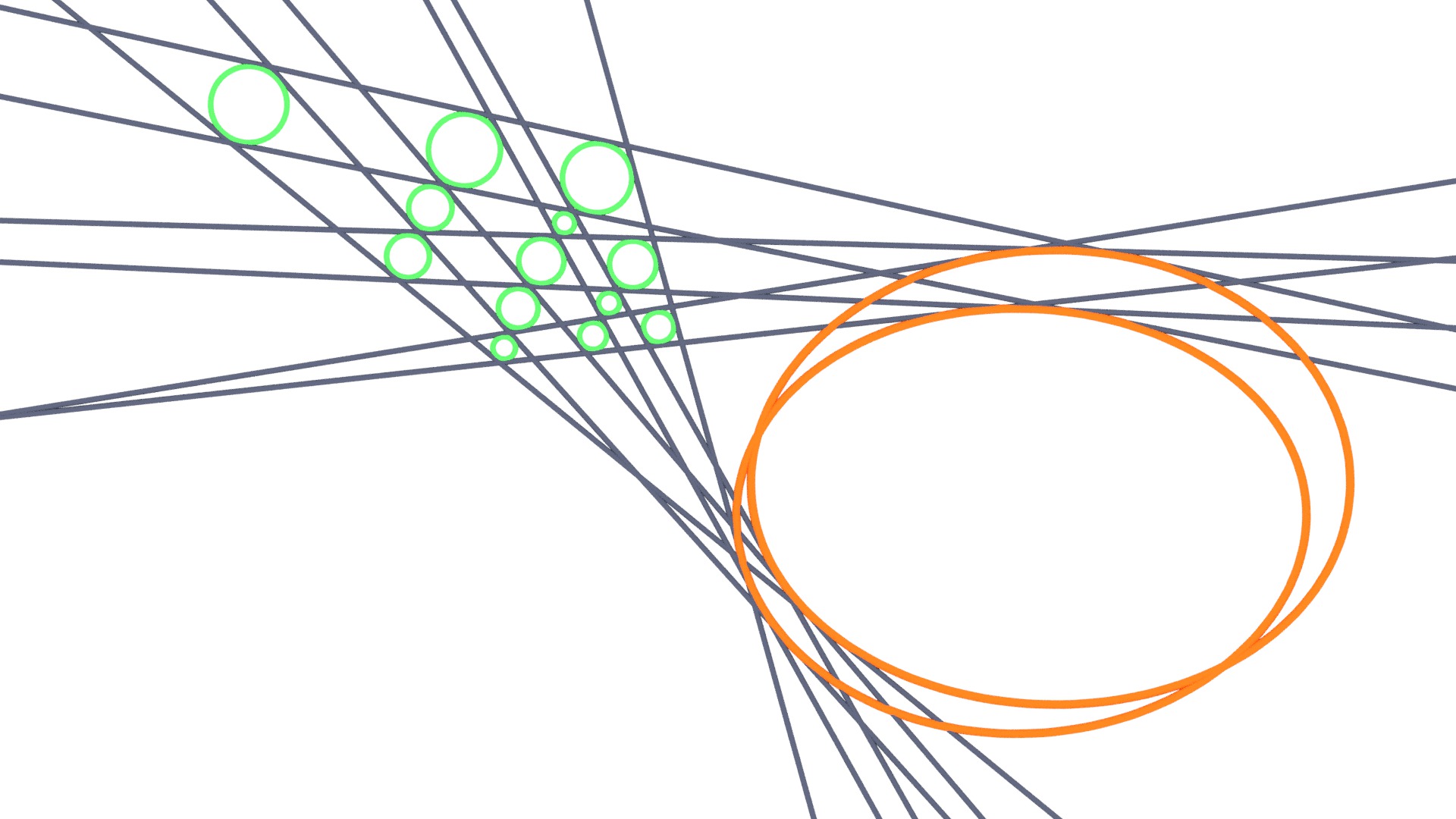}
    \includegraphics[width=0.49\textwidth]{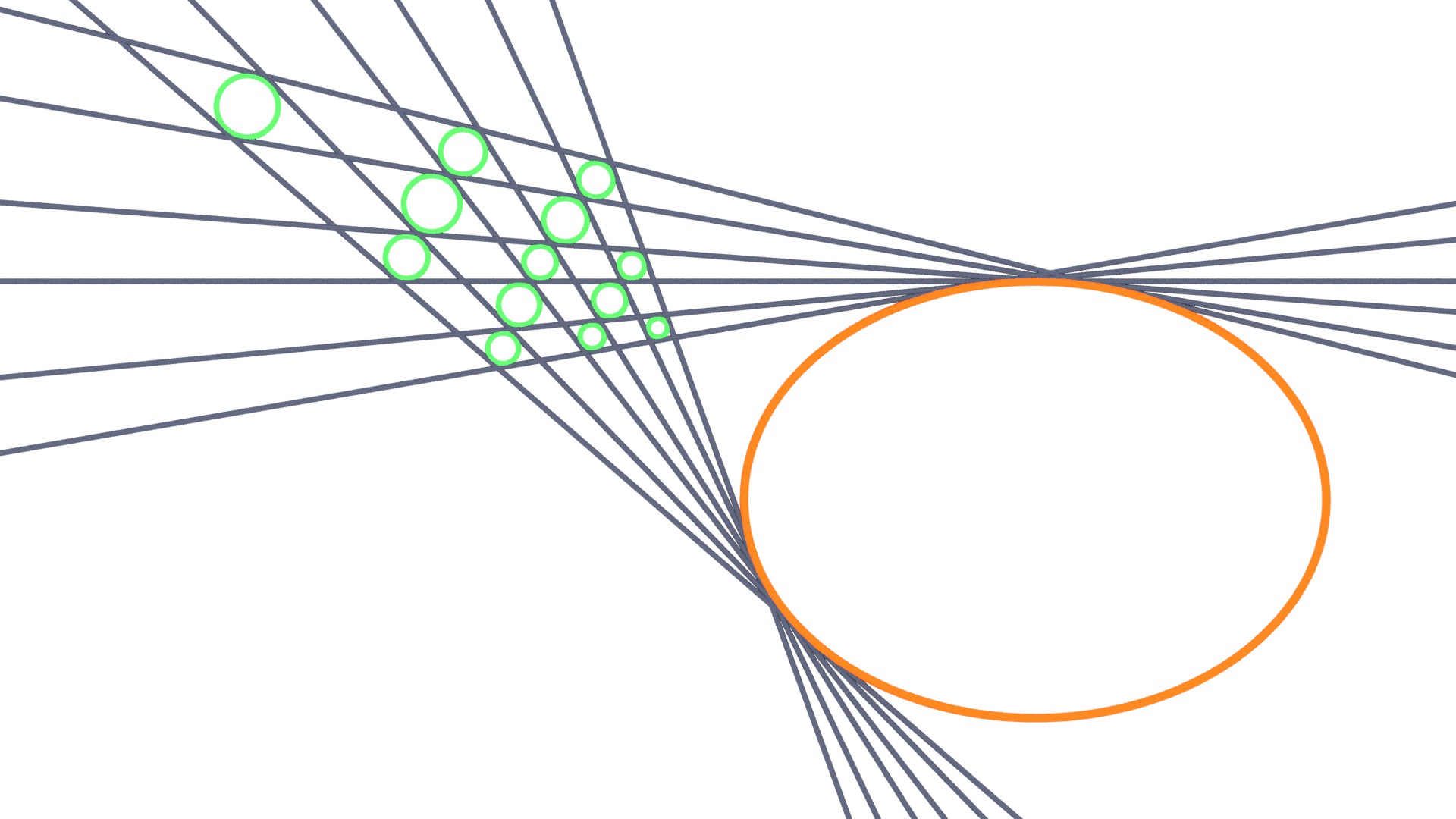}
  }
\caption{The lines of a checkerboard IC-net are tangent to a hypercycle. The hypercycle on the right consists of two coinciding ellipses of different orientations, encapsulating the tangent conic of a confocal checkerboard IC-net.}
\label{f.hypercycle}
\end{figure}

In the following, it is convenient to adopt a notion of genericity.

\begin{definition}
A quadric in $\R^3$ is termed {\em generic} if it does not contain the ``point at infinity'' on the axis of the Blaschke cylinder. A generic hypercycle base curve is the intersection of a generic quadric with the Blaschke cylinder. A pencil of quadrics containing the Blaschke cylinder is generic if one and, therefore, all quadrics of the pencil other than the Blaschke cylinder are generic. A checkerboard IC-net is generic if it is associated with a generic hypercycle base curve or, equivalently, a generic pencil of quadrics. 
\end{definition}

We note that the standard square grid (appropriately oriented) does not constitute a generic checkerboard IC-net. Moreover, the above definition implies that the hypercycle base curve ${\cal C}\subset {\cal Z}$ associated with a generic quadric has bounded $d$-coordinate. 

\begin{corollary}
The lines of a generic checkerboard IC-net lie in bounded distance to the origin.
\end{corollary}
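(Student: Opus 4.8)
The plan is to reduce the claim to the boundedness of the $d$-coordinate along the hypercycle base curve $\mathcal{C}$. In the Blaschke cylinder model an oriented line is a point $(\bv,d)\in\mathcal{Z}$ with $|\bv|=1$, and the distance from the origin to this line is exactly $|d|$: the point $d\bv$ satisfies $(\bv,d\bv)_{\R^2}=d$, so it lies on the line, and $|d\bv|=|d|$ since $\bv$ is a unit vector; being the foot of the perpendicular, it realises the distance. Hence the corollary is equivalent to the assertion that $d$ is bounded over all points of $\mathcal{C}$ representing lines of the net.

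First I would confirm that every line of the net does correspond to a point of $\mathcal{C}$. By the preceding corollary, each $L_{2k+1}=(\ell_{2k+1},\ell_{2k+2})$ is a generator of $\mathcal{H}$ and each $L_{2k}=(\ell_{2k},\ell_{2k+1})$ a generator of $\tilde{\mathcal{H}}$; as the two endpoints of each such generator already lie on $\mathcal{Z}$, every $\ell_i$ lies on $\mathcal{H}\cap\mathcal{Z}=\mathcal{C}$, and similarly every $m_j\in\mathcal{C}$. Thus all lines of the net are encoded by points of the single curve $\mathcal{C}$, and it remains only to bound $d$ on $\mathcal{C}$.

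For the boundedness I would pass to the projective closure. Writing homogeneous coordinates $[v_1:v_2:d:w]$, the Blaschke cylinder is $v_1^2+v_2^2=w^2$, and the axis direction is the point at infinity $P_\infty=[0:0:1:0]$, which lies on $\mathcal{Z}$ but, by the genericity hypothesis, not on the projective closure $\overline{\mathcal{H}}$. Along $\mathcal{Z}$ the vector $(v_1,v_2)$ stays on the unit circle, hence is bounded; so if $d$ were unbounded on $\mathcal{C}$, a sequence $p_n\in\mathcal{C}$ with $d(p_n)\to\infty$ would give $[v_1(p_n):v_2(p_n):d(p_n):1]\to P_\infty$ in $\RP^3$. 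Since each $p_n$ lies on the closed set $\overline{\mathcal{H}}$, the limit $P_\infty$ would also lie on $\overline{\mathcal{H}}$, contradicting genericity. Therefore $d$ is bounded on $\mathcal{C}$, and the corollary follows.

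The statement is thus essentially immediate once the preceding corollary and the elementary identity ``distance $=|d|$'' are in hand; the only genuine content is the limiting argument of the third paragraph, which makes precise the remark that genericity ``implies bounded $d$-coordinate''. I expect no real obstacle, the single point requiring care being to take the limit in the compact space $\RP^3$ rather than in $\R^3$, so that the closedness of the quadric can be invoked to force $P_\infty$ into $\overline{\mathcal{H}}$.
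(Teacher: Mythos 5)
Your proof is correct and follows exactly the route the paper intends: the paper states the corollary without a formal proof, relying on the observation immediately preceding it that genericity (the quadric avoiding the point at infinity on the cylinder's axis) forces the hypercycle base curve to have bounded $d$-coordinate, which is precisely the distance of the corresponding line to the origin. Your projective limiting argument simply makes that observation rigorous, so there is nothing to add.
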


\subsection{Construction of checkerboard IC-nets}

We are now in a position to formulate the construction of checkerboard IC-nets in the Blaschke cylinder model. One starts with two one-sheeted hyperboloids ${\cal H},\tilde{\cal H}$ of a pencil of quadrics containing $\cal Z$ and two points $\ell_1,m_1$ of the hypercycle base curve ${\cal C}={\cal H}\cap{\cal Z}$. Let us make a choice and refer to one of the families of straight lines (generators) of the hyperboloid $\cal H$ as the $L$-family and the other one as the $M$-family. Make the choice of the $L$- and $M$-families on $\tilde{\cal H}$ as well. Now, the checkerboard IC-net is uniquely determined in the following sense. Label by $L_1$ the line from the $L$-family of $\cal H$ passing through $\ell_1$ and denote by $\ell_2$ its second point of intersection with $\cal C$. Similarly, the point $m_2\in {\cal C}$ is the second point of intersection with $\cal Z$ of the $M$-line of $\cal H$ labelled by $M_1$ passing through $m_1$. Proceed further with the generators of the hyperboloid $\tilde{\cal H}$, where $L_2$ and $M_2$ are the $L$-line and $M$-line of $\tilde{\cal{H}}$ passing through $\ell_2$ and $m_2$ respectively. The additional points of intersection with $\mathcal{C}$ are denoted by $\ell_3$ and $m_3$ respectively. By alternating in this manner between the hyperboloids $\mathcal{H}$ and $\tilde{\mathcal{H}}$, the lines of a checkerboard IC-net $\ell_n$ and $m_n$ represented as points of the hypercycle base curve which are connected by generators $L_n = (\ell_n,\ell_{n+1})$ and $M_n = (m_n,m_{n+1})$ may be constructed (see Figure \ref{f.Blaschke_construction}). Thus, we conclude with the following theorem.
\begin{figure}
  \centering
  \raisebox{-0.5\height}{
    \begin{tikzpicture}[line cap=line join=round,>=stealth,x=1.0cm,y=1.0cm, scale=1]
      \node[anchor=south west,inner sep=0] (image) at (0,0) {\includegraphics[trim={3.2cm 0 3.2cm 0},clip,width=0.32\textwidth]{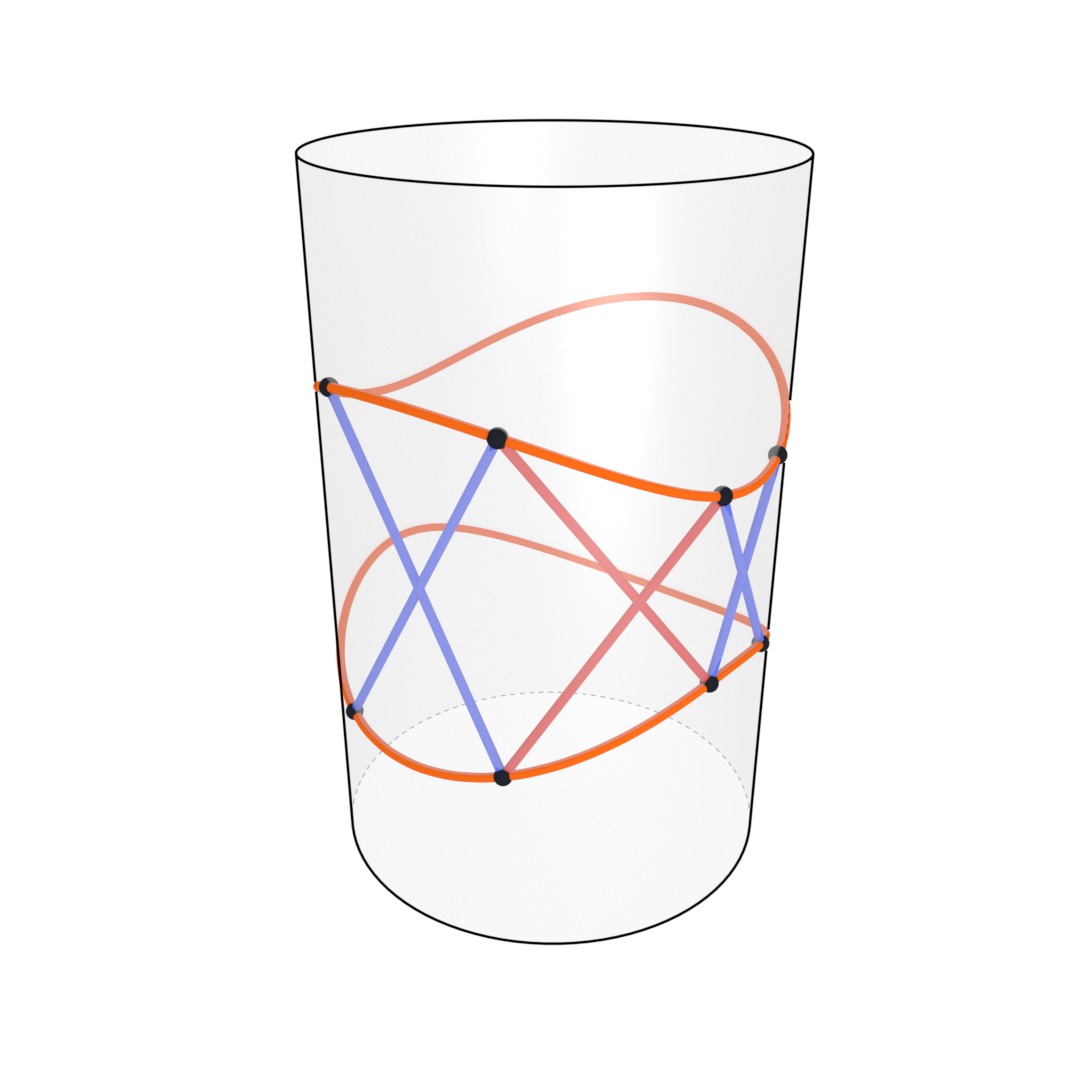}};
      \begin{scriptsize}
        \coordinate[label={[label distance=0.06cm]90:\contour{white}{$\mathcal{Z}$}}] (l1) at (2.7,5.4);
        \coordinate[label={[label distance=0.06cm]90:\textcolor{orange}{\contour{white}{$\mathcal{C}$}}}] (l1) at (3.3,4.4);
        \coordinate[label={[label distance=0.06cm]225:\contour{white}{$\ell_1$}}] (l1) at (1.5,2.1);
        \coordinate[label={[label distance=0.06cm]90:\contour{white}{$\ell_2$}}] (l2) at (2.32,3.65);
        \coordinate[label={[label distance=0.06cm]-90:\contour{white}{$\ell_3$}}] (l3) at (3.55,2.25);
        \coordinate[label={[label distance=0.06cm]45:\contour{white}{$\ell_4$}}] (l4) at (3.95,3.55);
        \coordinate[label={[label distance=0.06cm]135:\contour{white}{$m_1$}}] (m1) at (1.35,3.95);
        \coordinate[label={[label distance=0.06cm]-90:\contour{white}{$m_2$}}] (m2) at (2.35,1.75);
        \coordinate[label={[label distance=0.06cm]90:\contour{white}{$m_3$}}] (m3) at (3.6,3.35);
        \coordinate[label={[label distance=0.06cm]-45:\contour{white}{$m_4$}}] (m4) at (3.85,2.5);
      \end{scriptsize}
    \end{tikzpicture}}
  \raisebox{-0.5\height}{
    \begin{tikzpicture}[line cap=line join=round,>=stealth,x=1.0cm,y=1.0cm, scale=1]
      \node[anchor=south west,inner sep=0] (image) at (0,0) {\includegraphics[trim={3.2cm 0 3.2cm 0},clip,width=0.32\textwidth]{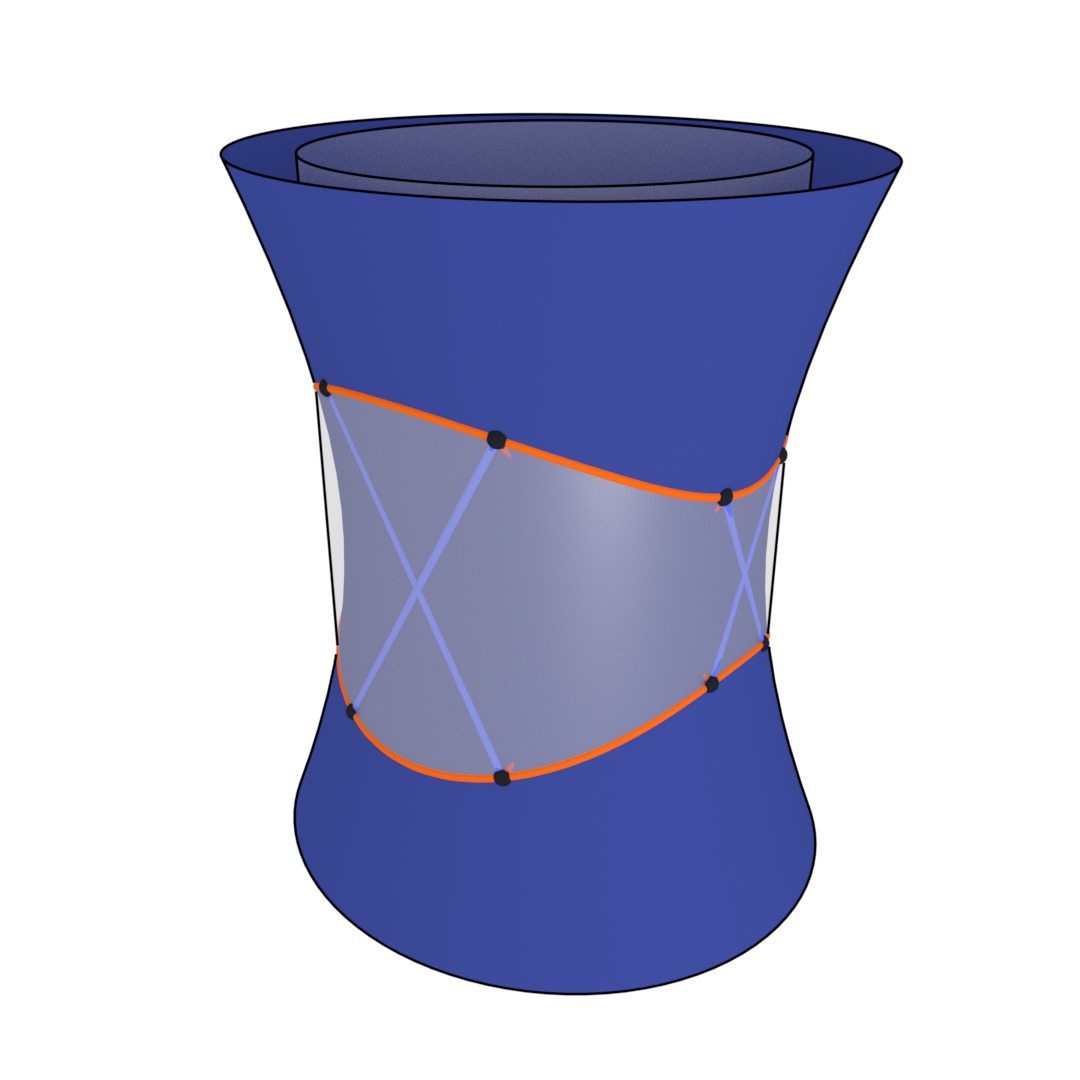}};
      \begin{scriptsize}
        \coordinate[label={[label distance=0.06cm]90:\textcolor{blue}{\contour{white}{$\mathcal{H}$}}}] (l1) at (4.5,4.3);
        \coordinate[label={[label distance=0.06cm]180:\contour{white}{$\ell_1$}}] (l1) at (1.5,2.1);
        \coordinate[label={[label distance=0.06cm]90:\textcolor{white}{$\ell_2$}}] (l2) at (2.32,3.65);
        \coordinate[label={[label distance=0.06cm]-90:\textcolor{white}{$\ell_3$}}] (l3) at (3.55,2.25);
        \coordinate[label={[label distance=0.06cm]45:\contour{white}{$\ell_4$}}] (l4) at (3.95,3.55);
        \coordinate[label={[label distance=0.06cm]135:\contour{white}{$m_1$}}] (m1) at (1.35,3.95);
        \coordinate[label={[label distance=0.06cm]-90:\textcolor{white}{$m_2$}}] (m2) at (2.35,1.75);
        \coordinate[label={[label distance=0.06cm]90:\textcolor{white}{$m_3$}}] (m3) at (3.6,3.35);
        \coordinate[label={[label distance=0.06cm]-45:\contour{white}{$m_4$}}] (m4) at (3.85,2.5);
      \end{scriptsize}
    \end{tikzpicture}}
  \raisebox{-0.5\height}{
    \begin{tikzpicture}[line cap=line join=round,>=stealth,x=1.0cm,y=1.0cm, scale=1]
      \node[anchor=south west,inner sep=0] (image) at (0,0) {\includegraphics[trim={3.2cm 0 3.2cm 0},clip,width=0.32\textwidth]{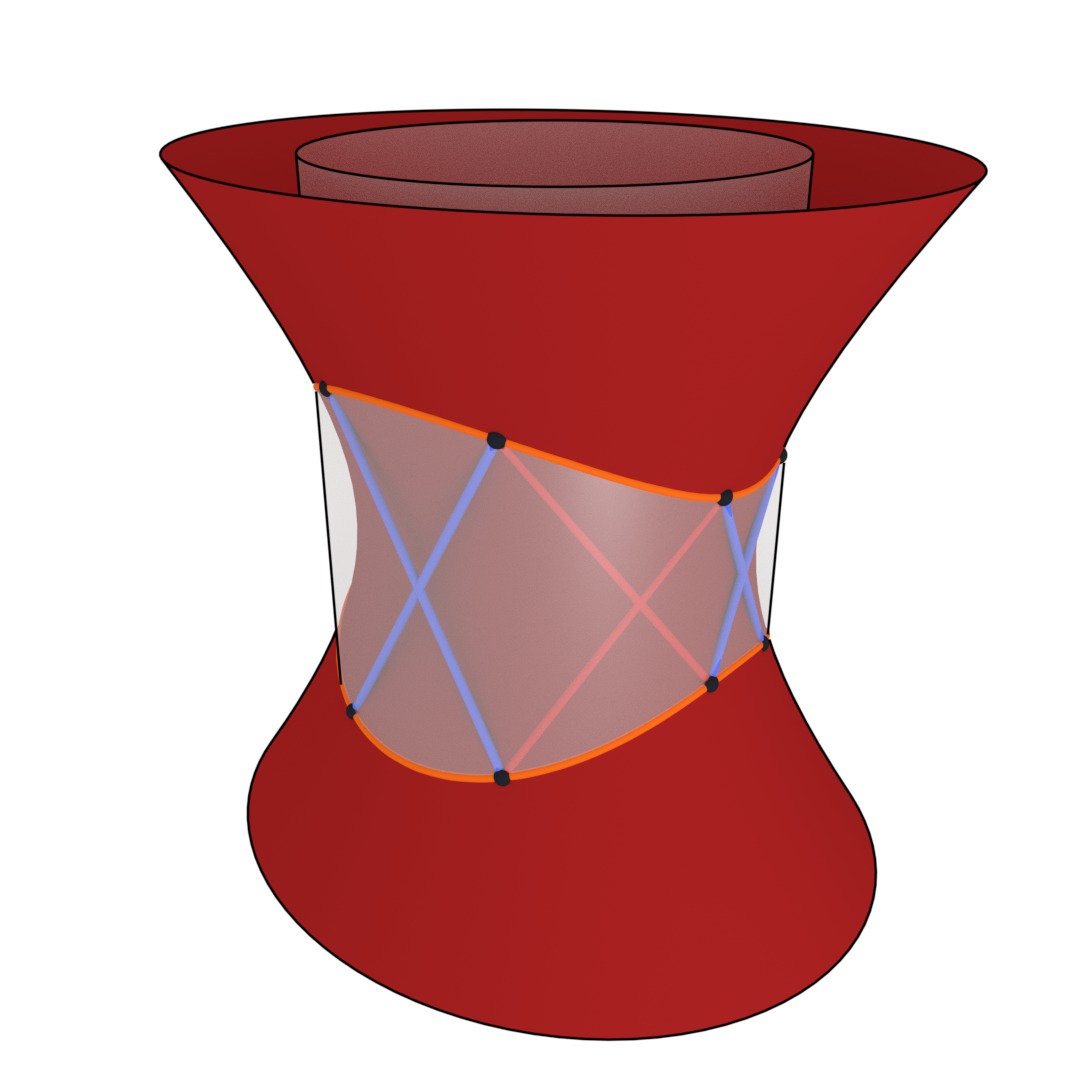}};
      \begin{scriptsize}
        \coordinate[label={[label distance=0.06cm]90:\textcolor{red}{\contour{white}{$\mathcal{\tilde{H}}$}}}] (l1) at (4.9,4.3);
        \coordinate[label={[label distance=0.06cm]225:\textcolor{white}{$\ell_1$}}] (l1) at (1.5,2.1);
        \coordinate[label={[label distance=0.06cm]90:\textcolor{white}{$\ell_2$}}] (l2) at (2.32,3.65);
        \coordinate[label={[label distance=0.06cm]-90:\textcolor{white}{$\ell_3$}}] (l3) at (3.55,2.25);
        \coordinate[label={[label distance=0.06cm]45:\contour{white}{$\ell_4$}}] (l4) at (3.95,3.55);
        \coordinate[label={[label distance=0.06cm]135:\contour{white}{$m_1$}}] (m1) at (1.35,3.95);
        \coordinate[label={[label distance=0.06cm]-90:\textcolor{white}{$m_2$}}] (m2) at (2.35,1.75);
        \coordinate[label={[label distance=0.06cm]90:\textcolor{white}{$m_3$}}] (m3) at (3.6,3.35);
        \coordinate[label={[label distance=0.06cm]-45:\contour{white}{$m_4$}}] (m4) at (3.85,2.5);
      \end{scriptsize}
    \end{tikzpicture}}
  \caption{
    Construction of checkerboard IC-nets in the Blaschke cylinder model.
    The lines  $L_{2k+1},M_{2k+1}$ (red) and $L_{2k},M_{2k}$ (blue) are generators of the quadrics $\mathcal{H}$
    and $\tilde{\mathcal{H}}$ respectively.
  }
\label{f.Blaschke_construction}
\end{figure}

\begin{theorem} {\bf (Construction of checkerboard IC-nets in the Blaschke model)}
\label{th.Blaschke_construction}
A checkerboard IC-net is uniquely determined by hyperboloids ${\cal H},\tilde{\cal H}$ (with marked $L$- and $M$-families of generators on each hyperboloid) of a pencil of quadrics containing $\cal Z$ and two points $\ell_1, m_1$ of the hypercycle base curve ${\cal C}={\cal H}\cap{\cal Z}$. 
\end{theorem}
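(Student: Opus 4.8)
The plan is to show that the construction described just above the statement is well-defined and that the resulting configuration satisfies the checkerboard IC-net axioms, and then to observe that the data leave no freedom. Everything rests on the incidence dictionary already established for the Blaschke cylinder: four oriented lines are in oriented contact with a common oriented circle if and only if the four corresponding points of $\mathcal{Z}$ are coplanar. Since $L_n=(\ell_n,\ell_{n+1})$ and $M_n=(m_n,m_{n+1})$ join consecutive representatives, I would restate the incircle condition as follows: the four lines $\ell_i,\ell_{i+1},m_j,m_{j+1}$ possess a common inscribed circle in oriented contact exactly when the generators $L_i$ and $M_j$ meet in $\mathbb{R}^3$, because two intersecting lines span the plane through the four points.

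First I would check that the construction is well-defined. As $\mathcal{C}$ is the base curve of the pencil, every point of $\mathcal{C}$ lies on both $\mathcal{H}$ and $\tilde{\mathcal{H}}$, so through each $\ell_n\in\mathcal{C}$ there passes a unique generator of the prescribed family on the relevant hyperboloid; being a line, it meets the quadric $\mathcal{Z}$ in at most two points, one of which is $\ell_n$, leaving a well-defined second point on $\mathcal{C}$. Alternating between $\mathcal{H}$ (odd steps) and $\tilde{\mathcal{H}}$ (even steps) then generates the sequences $(\ell_n),(m_n)\subset\mathcal{C}$ and the generators $L_n,M_n$, with $L_n,M_n$ lying on $\mathcal{H}$ for $n$ odd and on $\tilde{\mathcal{H}}$ for $n$ even.

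The heart of the proof would be a parity count. Reading off the definition, the black quadrilaterals are precisely those $\ell_i,\ell_{i+1},m_j,m_{j+1}$ for which the indices $i$ and $j$ have the same parity (even--even for the first family in the definition, odd--odd for the second). For such a pair the generators $L_i$ and $M_j$ lie on the \emph{same} hyperboloid and, by the marking of the $L$- and $M$-families, in its two \emph{opposite} rulings; since two generators from opposite rulings of a one-sheeted hyperboloid always intersect, $L_i$ and $M_j$ meet, and the coplanarity criterion then supplies the required circle. As this accounts for every black quadrilateral, the net is a checkerboard IC-net. Uniqueness is then immediate, since the generator of a given family through a given point, and its second intersection with $\mathcal{C}$, are determined once $\mathcal{H},\tilde{\mathcal{H}}$ (with their marked families) and $\ell_1,m_1$ are fixed.

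The hard part, I expect, will be only the non-degeneracy ensuring that the iteration does not stall: one must verify that at each step the generator is neither tangent to $\mathcal{Z}$ nor contained in it, so that a genuine \emph{second} point of $\mathcal{C}$ exists. I would dispose of this by invoking the genericity of the pencil (equivalently, of the base curve $\mathcal{C}$) introduced earlier, under which the construction proceeds without interruption for all $n\in\mathbb{Z}$. Notably, this route makes no separate appeal to Theorem~\ref{t.IC-incidence}: the closing-up of every black quadrilateral is built into the ruling structure of the two hyperboloids from the outset.
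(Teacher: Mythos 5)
Your proposal is correct and follows essentially the same route as the paper: the theorem there is stated as the summary of precisely this construction, with the verification that all ``black'' quadrilaterals close up left implicit. Your explicit parity count --- generators $L_i$, $M_j$ of equal parity lie in opposite rulings of the same hyperboloid, hence are coplanar, so the four corresponding points of $\mathcal{Z}$ span a plane encoding the incircle --- is exactly the mechanism the paper relies on (the same fact closes the proof of Theorem~\ref{t.IC-incidence}), and your appeal to genericity for the non-degeneracy issues matches the paper's general-position assumption.
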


We now briefly discuss some illustrative classes of checkerboard IC-nets.

\subsubsection{(Confocal checkerboard) IC-nets}

In \cite{AB}, confocal checkerboard IC-nets are characterised by the property that their lines are tangent to a conic. In this paper, we make the assumption that the conic is either an ellipse or a hyperbola so that, by means of a rotation and a translation (which constitute special Laguerre transformations, this conic may be brought into the form
\begin{equation}
\label{eq.conic_normalized}
\frac{x^2}{a}+\frac{y^2}{b}=1.
\end{equation}
Tangent lines to the conic are given by
\begin{equation}
\label{eq.tangent_line}
\frac{xx_0}{a}+\frac{yy_0}{b}=1
\end{equation}
with
\begin{equation}
\label{eq.tangent_line_point}
  \frac{x_0^2}{a}+\frac{y_0^2}{b}=1.
\end{equation}
If we set $\bv = (v,w)$ in the $(\bv,d)$ description employed at the beginning of Section \ref{s.laguerre}, this leads to $v=x_0d/a$ and $w=y_0d/b$ so that \eqref{eq.tangent_line_point} may be expressed in terms of the cone \begin{equation}
\label{eq.cone_confocal}
av^2+bw^2=d^2.
\end{equation}
We refer to the latter as ``elliptic'' if $a>0,b>0$ and ``hyperbolic'' if $ab<0$. The hypercycle base curve $\cal C$ is the intersection of the cone \eqref{eq.cone_confocal} with the Blaschke cylinder $v^2+w^2=1$. It has two connected components and is symmetric with respect to the change of orientation $(v,w,d)\to (-v,-w,-d)$. In the plane, the two components of the hypercycle are the conic~\eqref{eq.conic_normalized} equipped with two different orientations.
Confocal checkerboard IC-nets are parametrised explicitly in Section \ref{s.confocal}.
\begin{figure}
  \centering
  \includegraphics[width=0.45\textwidth]{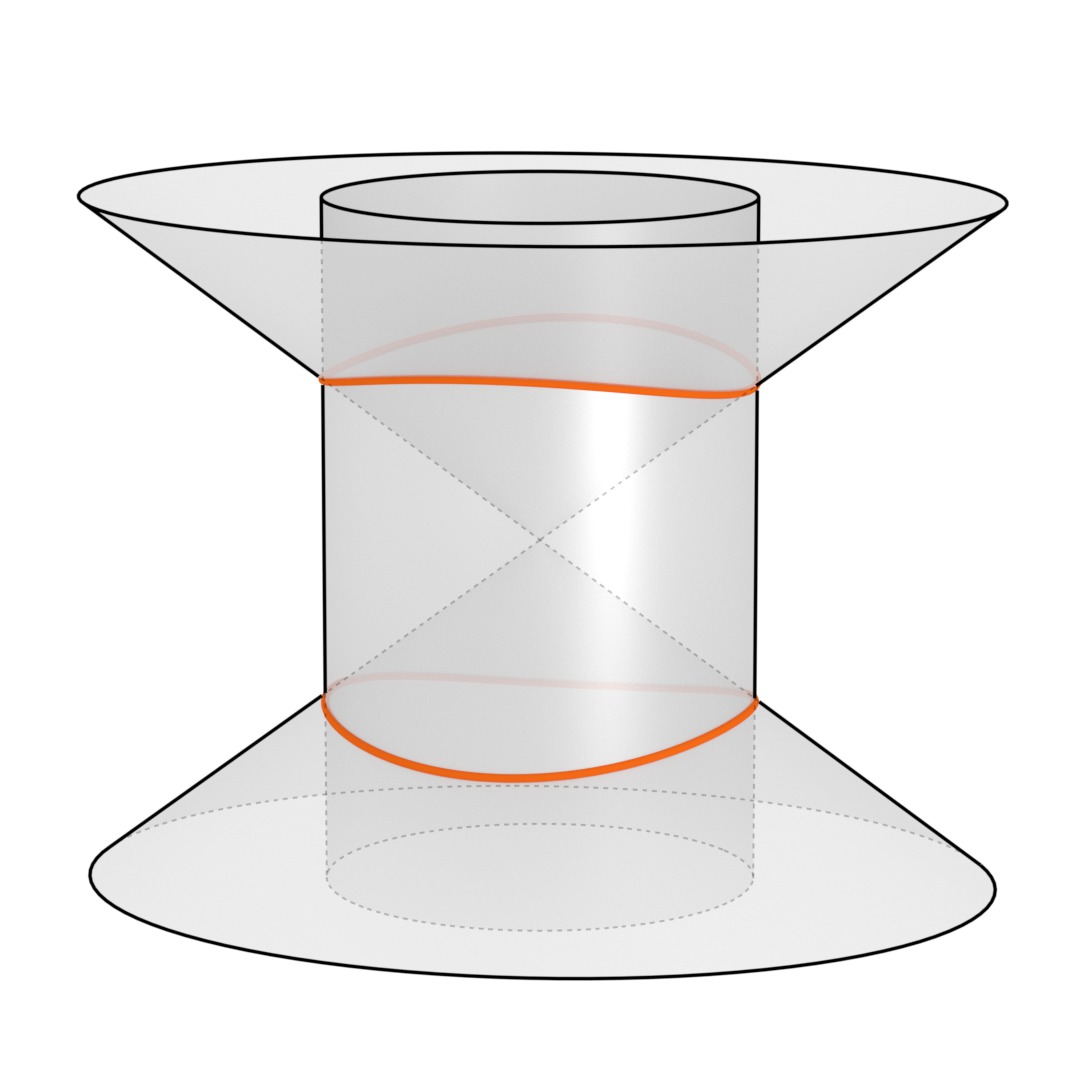}
  \includegraphics[width=0.45\textwidth]{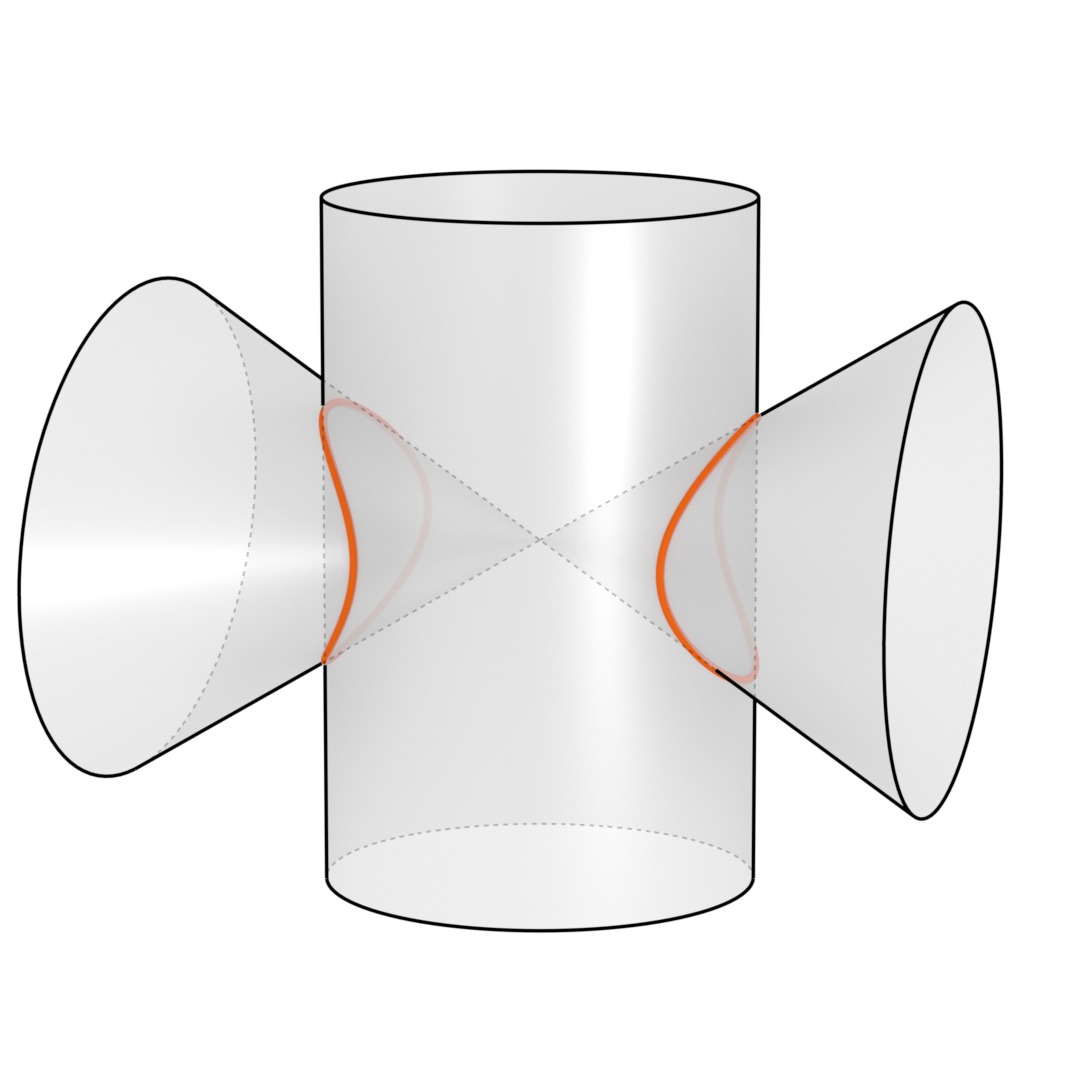}
  \caption{The elliptic and hyperbolic cones \protect\eqref{eq.cone_confocal} and the hypercycle base curves of confocal checkerboard IC-nets in the Blaschke cylinder model.}
\label{f.hyper_ellipse}
\end{figure}

As observed in Remark \ref{rem.IC}, any IC-net may be regarded as a (confocal) checkerboard IC-net by interpreting each line of the IC-net as a ``double line'', that is, two identical lines of opposite orientation represented by $\pm(v,w,d)$. Accordingly, one of the hyperboloids of the corresponding checkerboard IC-net constitutes a cone of the form \eqref{eq.cone_confocal}. Indeed, the latter may be regarded as a characterisation of IC-nets in the context of checkerboard IC-nets.

\subsubsection{Degeneration to rhombic checkerboard IC-nets}

Hyperbolic confocal checkerboard IC-nets may be regarded as deformations of ``rhombic'' checkerboard IC-nets, that is, checkerboard IC-nets composed of identical rhombi. In order to show this, let $\mathcal{H}$ and $\tilde{\mathcal{H}}$ be the two hyperboloids underlying a hyperbolic checkerboard IC-net $\mathcal{N}$. Since the two hyperboloids belong to a pencil of quadrics, $\tilde{{\cal H}}$ may be regarded as a deformation of $\cal H$ with the parameter of the pencil playing the role of the deformation parameter. This implies, in turn, that we may interpret $\mathcal{N}$ as a deformation of a confocal checkerboard IC-net $\mathcal{N}_c$ for which $\tilde{\mathcal{H}}=\mathcal{H} = \mathcal{H}_c$ coincide. According to the construction of checkerboard IC-nets summarised in Theorem \ref{th.Blaschke_construction}, $\mathcal{N}_c$ can only be non-trivial if the $L$-family on $\mathcal{H}$ coincides with the $M$-family on $\tilde{\mathcal{H}}$ (and vice versa), thereby representing the same family of generators on $\mathcal{H}_c$. Hence, we here assume that the choice of the $L$-and $M$-families on the hyperboloids associated with $\mathcal{N}$ has been made in such a manner that, in the limit $\tilde{\cal H}\rightarrow {\cal H}$, this non-triviality requirement is met.

\begin{figure}
  \centering
  \raisebox{-0.5\height}{\includegraphics[scale=0.06]{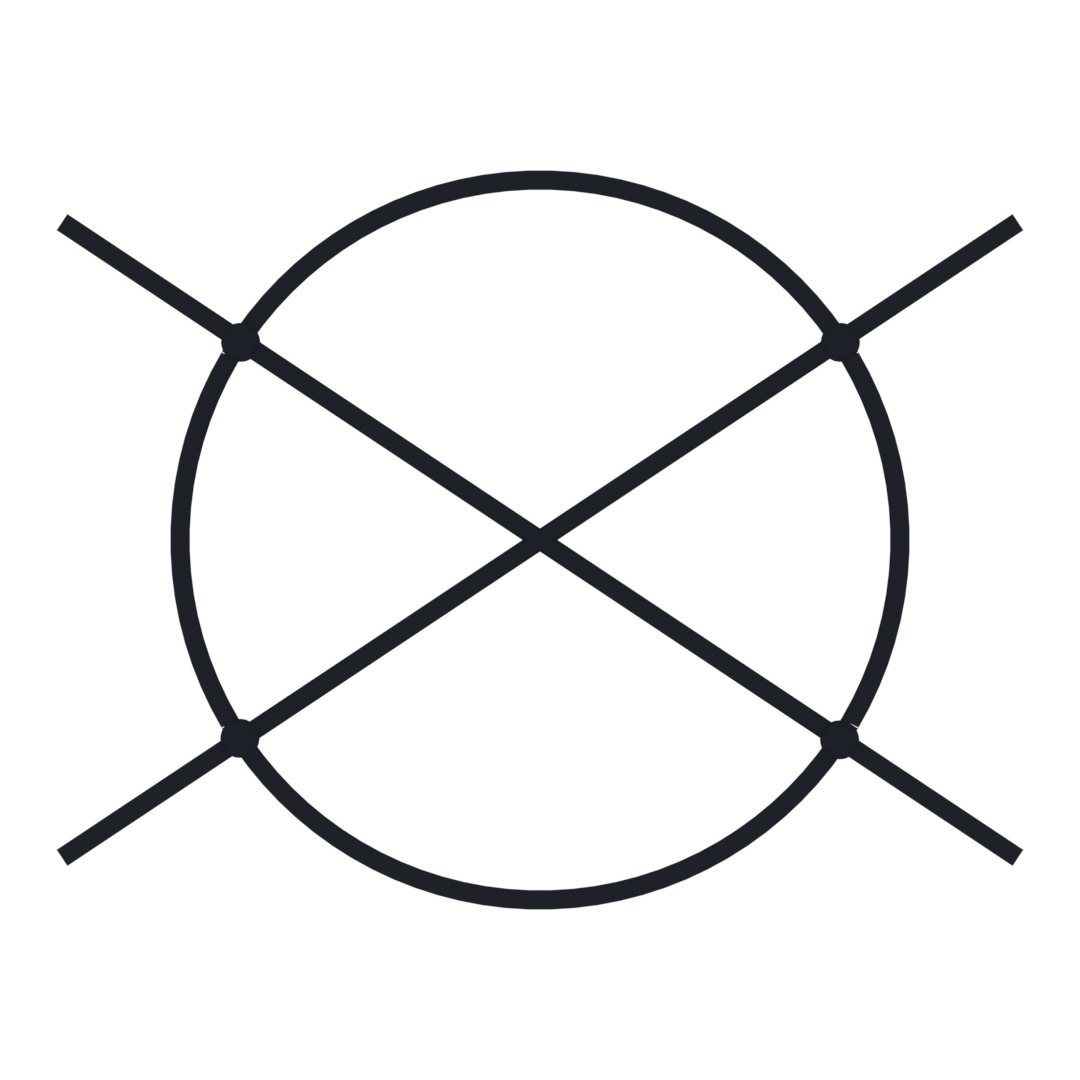}}
  \raisebox{-0.5\height}{\includegraphics[scale=0.17]{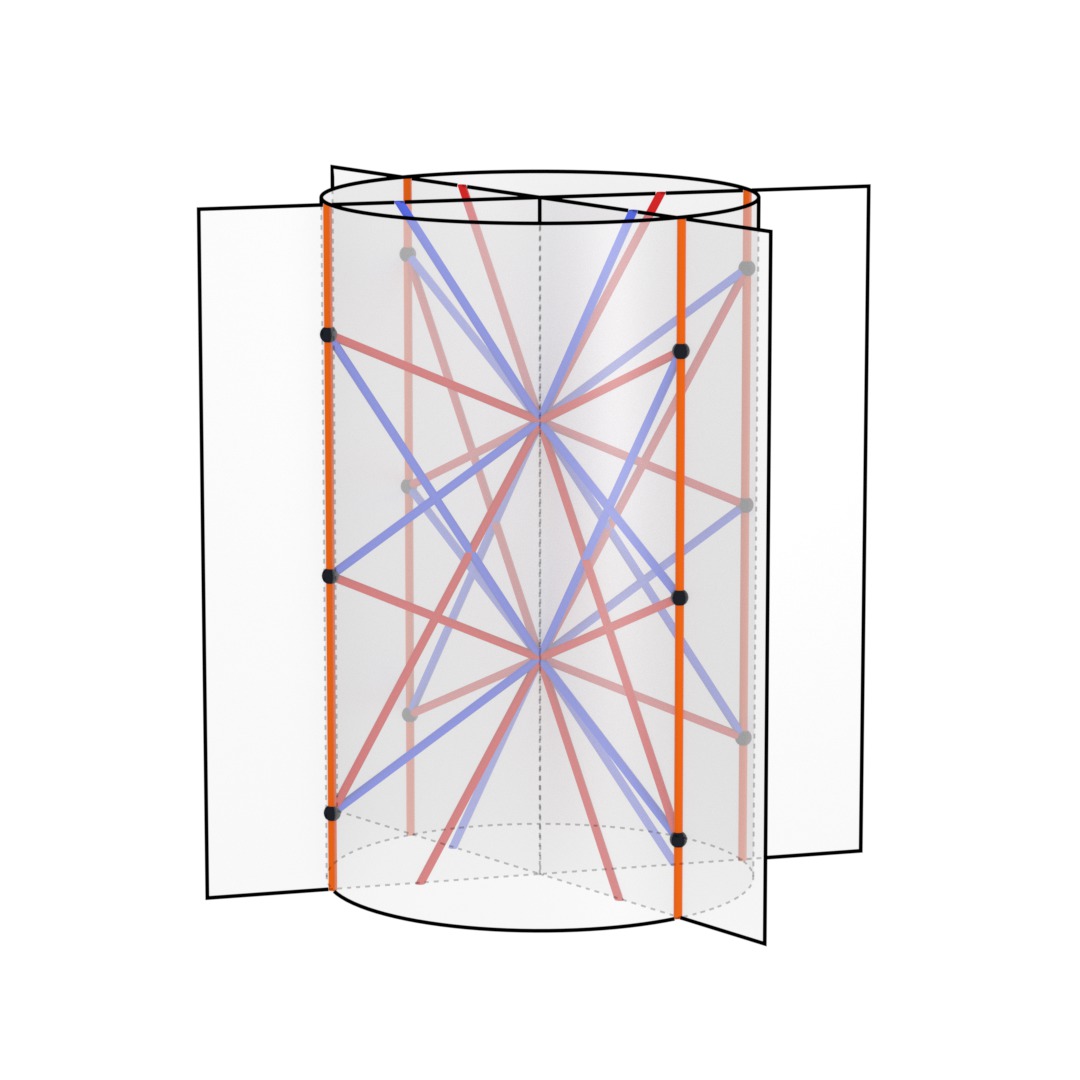}}
  \raisebox{-0.5\height}{\includegraphics[scale=0.13]{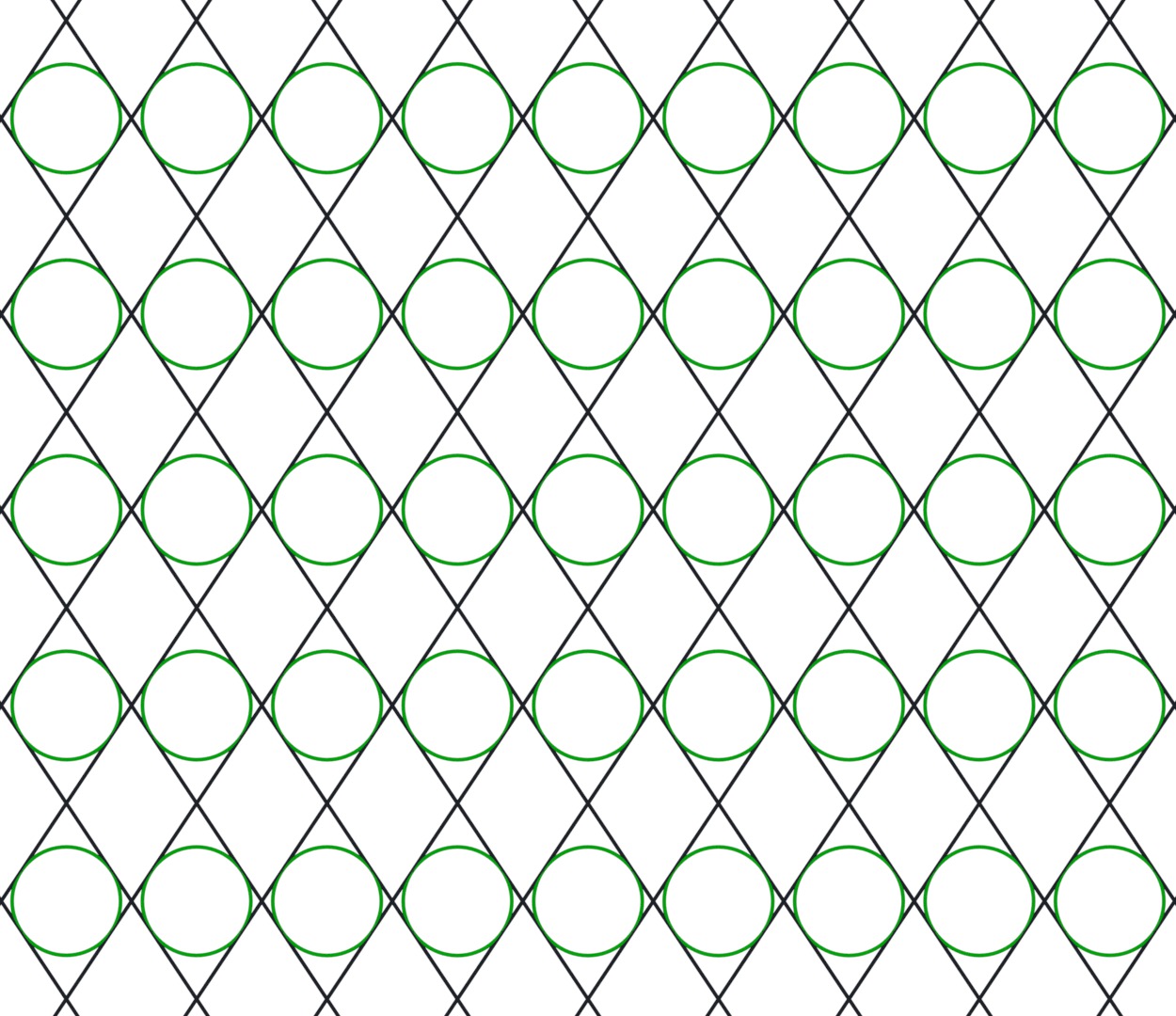}}
  \raisebox{-0.5\height}{\includegraphics[scale=0.06]{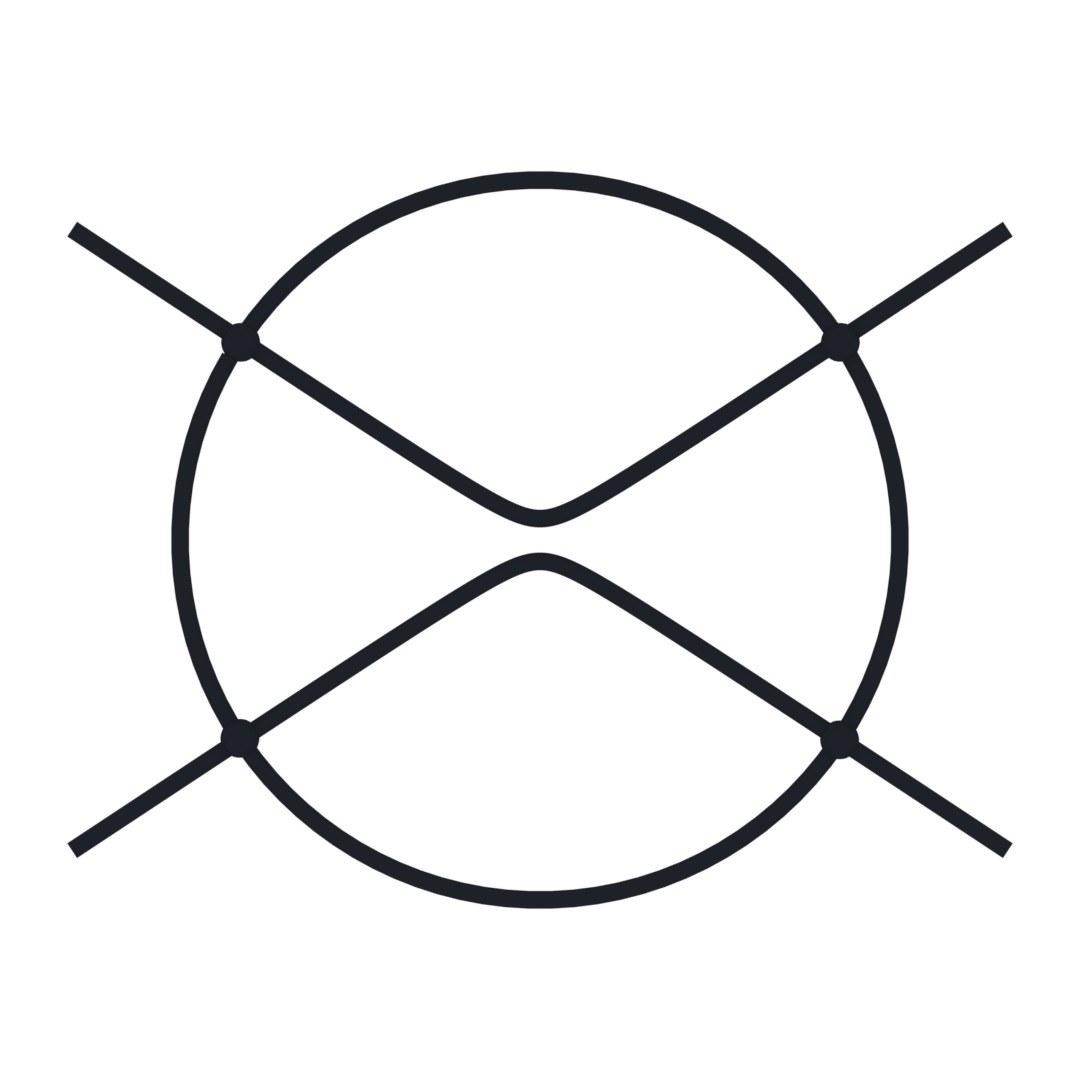}}
  \raisebox{-0.5\height}{\includegraphics[scale=0.17]{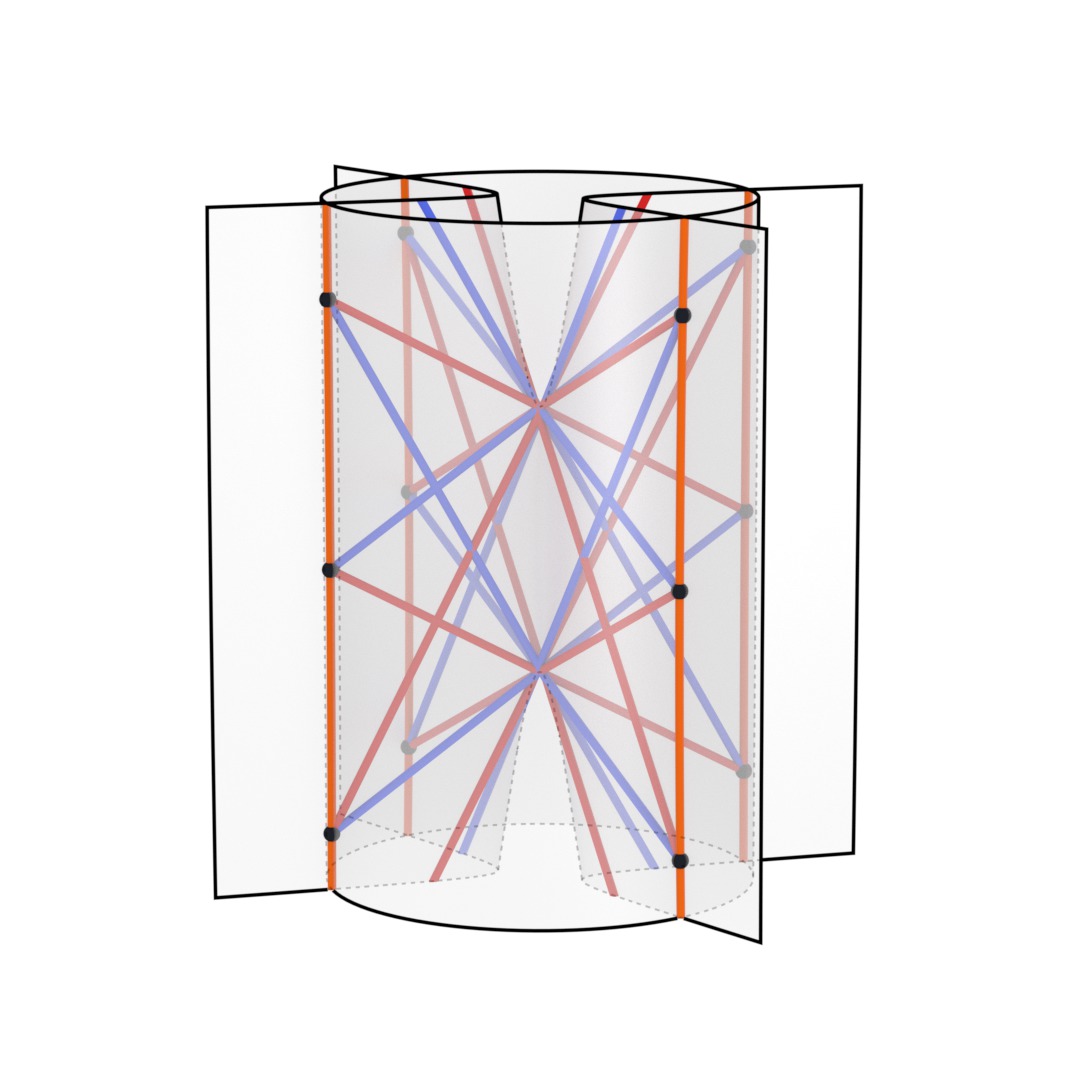}}
  \raisebox{-0.5\height}{\includegraphics[scale=0.13]{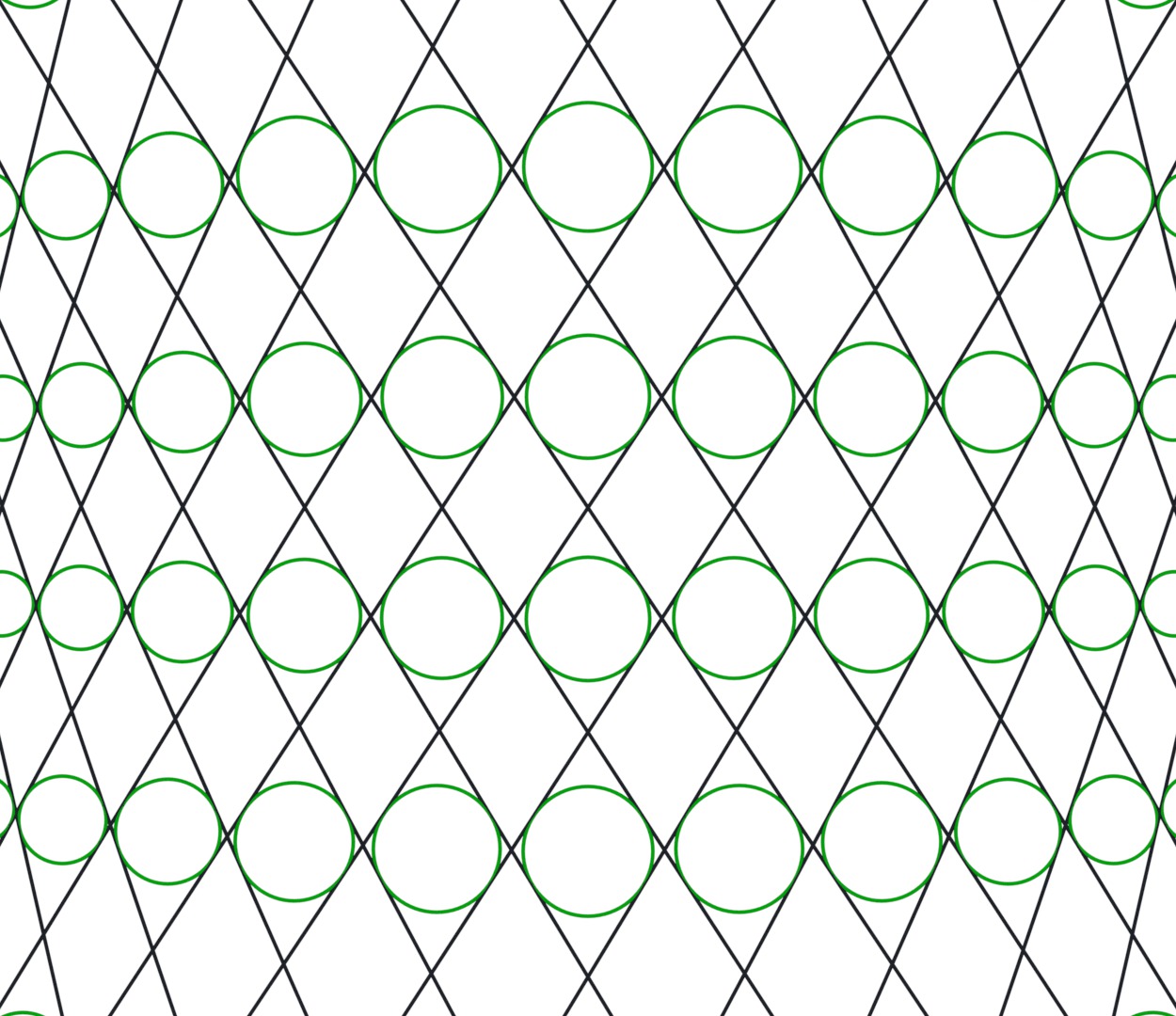}}
  \raisebox{-0.5\height}{\includegraphics[scale=0.06]{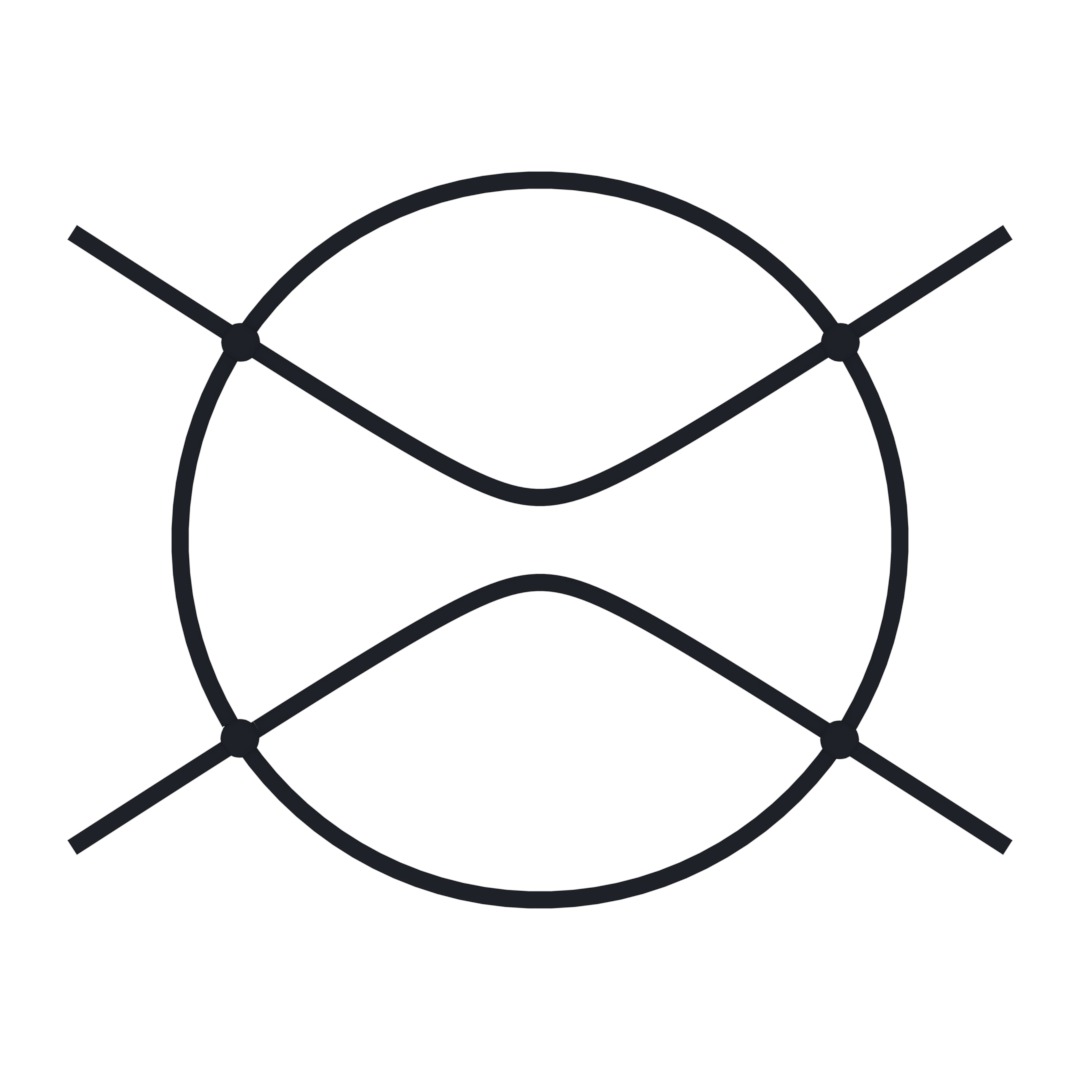}}
  \raisebox{-0.5\height}{\includegraphics[scale=0.17]{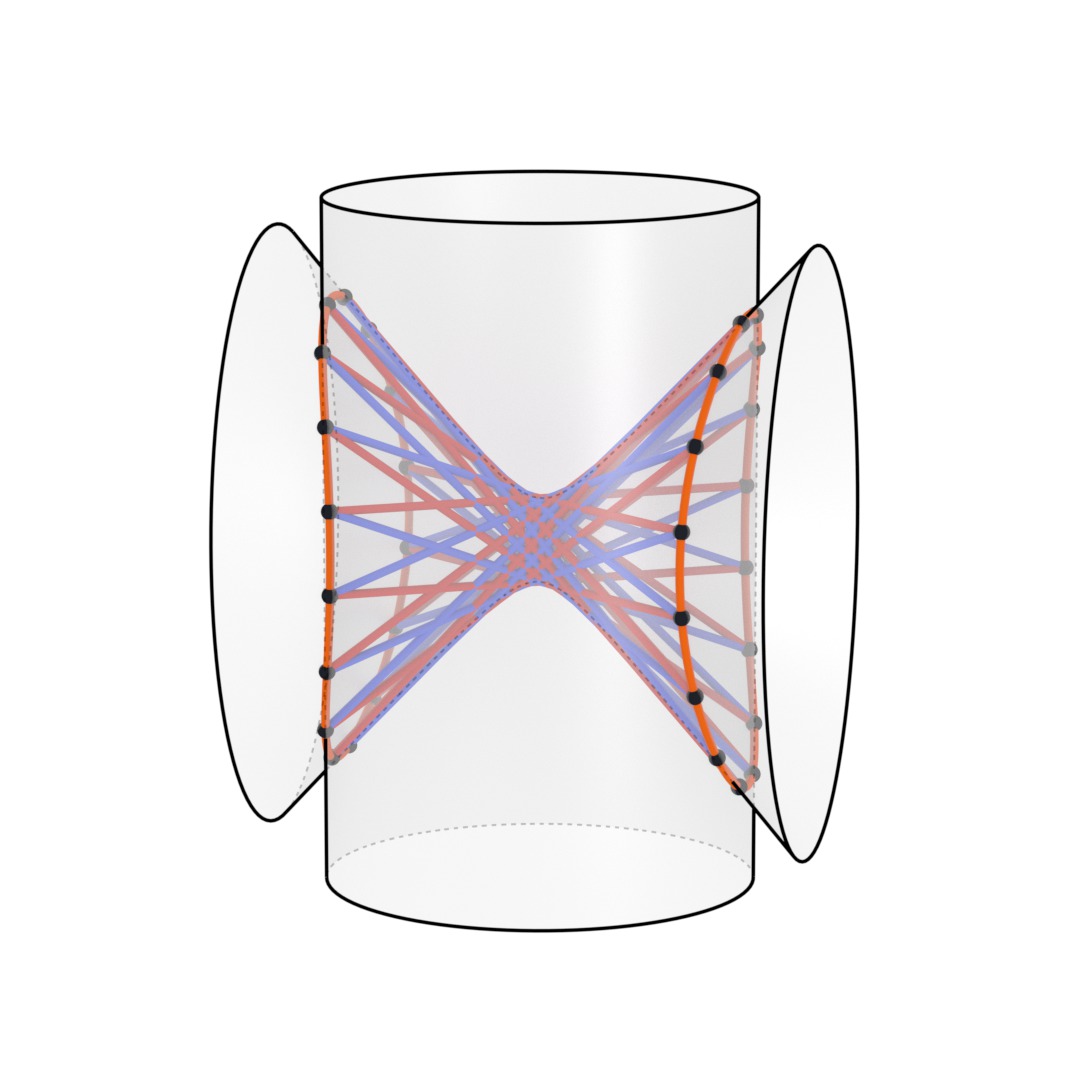}}
  \raisebox{-0.5\height}{\includegraphics[scale=0.13]{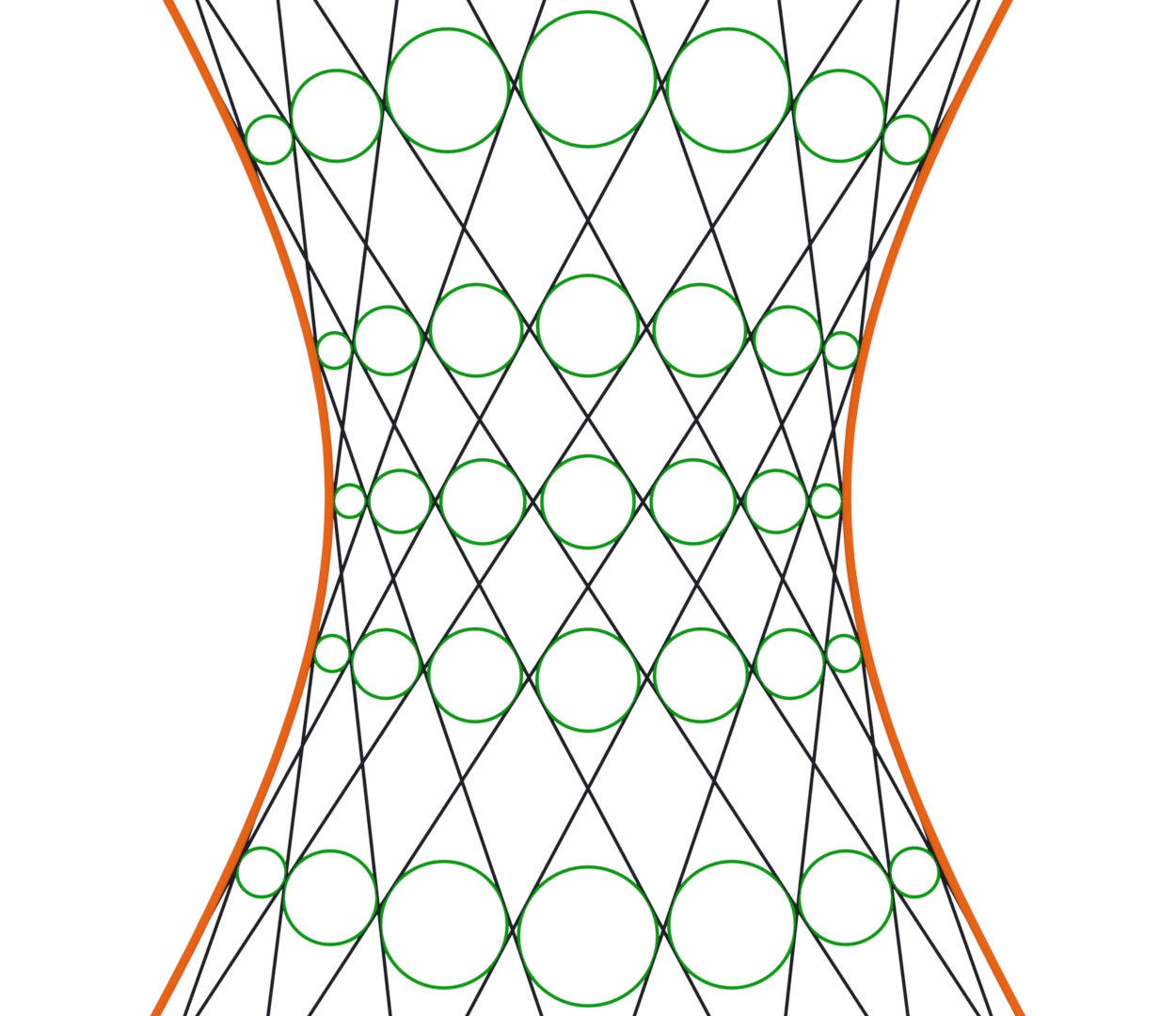}}
  \caption{Deformation of a rhombic checkerboard IC-net.
    \emph{Top to bottom:} A rhombic checkerboard IC-net.
    A slightly deformed ``almost rhombic'' checkerboard IC-net with equal hyperboloids $\mathcal{H} = \tilde{\mathcal{H}}$.
    A hyperbolic checkerboard IC-net  from a larger deformation with equal hyperboloids $\mathcal{H} = \tilde{\mathcal{H}}$.
    \emph{Left to right:} Conics in the $d=0$ plane (The corresponding pencil has 4 base points).
    The checkerboard IC-net in the Blaschke cylinder model.
    The checkerboard IC-net in the plane.
  }
  \label{f.equal_parallelogram}
\end{figure}

The next step (see Figure~\ref{f.equal_parallelogram}) is to consider a one-parameter family of confocal checkerboard IC-nets $\mathcal{N}_c(\epsilon)$ with underlying hyperboloids ${\cal H}_c(\epsilon)$ given by
\bela{C1}
  w_0^2v^2 - v_0^2w^2 = \epsilon^2(\Delta^2 - d^2),\qquad \epsilon\geq0,\quad\Delta>0
\ela
with $v_0^2 + w_0^2=1$, $v_0w_0\neq0$ and
\bela{C1a}
  w_0^2 - \epsilon^2\Delta^2 > 0
\ela
for hyperbolicity of $\mathcal{N}(\epsilon)$. In the limit $\epsilon\rightarrow0$, the hyperboloid $\mathcal{H}_c(0)$ degenerates to the union of the two planes
\bela{C2}
  \mathcal{P}_{\pm}:\quad w_0v \pm v_0w = 0
\ela
passing through the axis of the Blaschke cylinder $\mathcal{Z}$,
equipped with two special points on the axis given by
\bela{C3}
  P_{\pm} = (0,0,\pm\Delta).
\ela
In fact, it is easy to show that the generators of the hyperboloid $\mathcal{H}_c(\epsilon)$ become straight lines passing through either $P_+$ or $P_-$ as $\epsilon\rightarrow 0$. Specifically, all lines
$L_n$ associated with the degenerate checkerboard IC-net $\mathcal{N}_c(0)$ in the Blaschke cylinder model lie in the plane $\mathcal{P}_-$, whereby all $L_{2k}$ pass through $P_-$, while all $L_{2k+1}$ pass through $P_+$. Similarly, all lines $M_n$ lie in the plane ${\cal P}_+$ and all $M_{2k}$ pass through $P_-$, while all $M_{2k+1}$ pass through $P_+$. Moreover, without loss of generality, the lines of $\mathcal{N}_c(0)$ are given by $\ell_{2n}=(v_0,w_0,4n\Delta)$, $\ell_{2n+1}=(-v_0,-w_0,-2(2n+1)\Delta)$, $m_{2n}=(v_0,-w_0,4n\Delta)$, $m_{2n+1}=(-v_0,w_0,-2(2n+1)\Delta)$ so that $\mathcal{N}_c(0)$ is indeed of rhombic type.

It is observed that rhombic checkerboard IC-nets are non-generic since the corresponding hypercycle base curve $\cal C$ consists of four straight lines parallel to the axis of $\cal Z$. The simplest generic checkerboard IC-nets are obtained by ``switching on'' the parameter $\epsilon$. The conics obtained by intersecting the quadric $\mathcal{H}_c(\epsilon)$ with the $d=0$ plane for some ``small'' $\epsilon$ are displayed in Figure~\ref{f.equal_parallelogram} (left).
It is important to note that the number of real base points of the pencil of conics in the $d=0$ plane associated with the checkerboard IC-nets $\mathcal{N}_c(\epsilon)$, that is, the number of points common to the conics of the pencil in the $d=0$ plane is 4. This distinguishes hyperbolic confocal checkerboard IC-nets from elliptic confocal checkerboard IC-nets as discussed in detail in Section \ref{s.confocal}.


\section[Checkerboard IC-nets as Laguerre transforms of confocal checkerboard IC-nets]{Checkerboard IC-nets as Laguerre transforms of confocal\\ checkerboard IC-nets}

We will now examine under what circumstances checkerboard IC-nets may be regarded as Laguerre transforms of confocal checkerboard IC-nets. The associated analysis may naturally be split into two parts.  

\subsection{Pre-normalisation}
In homogeneous coordinates $\sim(v,w,1,d)$, the quadratic forms of the pencil of quadrics associated with a confocal checkerboard IC-net with normalised conics \eqref{eq.conic_normalized} are diagonal. Indeed, the cone \eqref{eq.cone_confocal} and the Blaschke cylinder $v^2+w^2=1$ generate the whole pencil. It is easy to see that the converse statement is also true if we include in our definition of confocal checkerboard IC-nets the case of the lines being ``tangent'' to a degenerate ellipse or hyperbola corresponding to $a>0$, $b\rightarrow0$ or $b>0$, $a\rightarrow0$. In this case, all lines pass through the two focal points. The cases $a<0$, $b\rightarrow0$ or $b<0$, $a\rightarrow0$ may be excluded since the hypercycle base curve consists of only two points corresponding to two lines which only differ in their orientation and which may not be used to construct a proper checkerboard IC-net.
\begin{theorem}\label{prop:confocal}
A generic checkerboard IC-net is confocal with its lines being tangent to a normalised conic of the form \eqref{eq.conic_normalized} if and only if the quadratic forms of the associated pencil of quadrics are diagonal in homogeneous coordinates $\sim(v,w,1,d)$.
\end{theorem}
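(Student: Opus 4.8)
The plan is to prove both directions of the equivalence, exploiting the characterisation already established in the excerpt: a checkerboard IC-net encodes a pencil of quadrics containing the Blaschke cylinder $\mathcal{Z}$, and conversely such a pencil (together with two marked points and a choice of generator families) determines the net via Theorem~\ref{th.Blaschke_construction}. The forward direction is essentially the computation preceding the statement. Starting from a confocal net whose lines are tangent to the normalised conic \eqref{eq.conic_normalized}, the tangency condition \eqref{eq.tangent_line_point} translates, via the substitution $v = x_0 d/a$, $w = y_0 d/b$, into the cone \eqref{eq.cone_confocal}, namely $av^2 + bw^2 = d^2$. Writing this homogeneously as $av^2 + bw^2 - d^2 = 0$ with coordinates $\sim(v,w,1,d)$, and noting that the Blaschke cylinder is $v^2 + w^2 - 1 = 0$, I would observe that these two quadratic forms are both diagonal and generate the entire pencil, since the pencil is two-dimensional and spanned by any two distinct members. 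Hence every quadratic form in the pencil is a linear combination of two diagonal forms and is therefore diagonal.

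For the converse I would start from a generic checkerboard IC-net whose associated pencil is diagonal, and must reconstruct the normalised conic. The key point is that the Blaschke cylinder $v^2 + w^2 - 1$ is one member of the pencil; since the whole pencil is diagonal, any second member may be taken in the diagonal form $\alpha v^2 + \beta w^2 + \gamma + \delta d^2 = 0$ with no cross terms and, crucially, no term linear in $d$ (the absence of the $vd$, $wd$, $1\cdot d$ monomials is exactly what ``diagonal in $\sim(v,w,1,d)$'' encodes). By subtracting a suitable multiple of the cylinder I can eliminate the constant or the $v^2+w^2$ contribution and bring a representative quadric to the form $av^2 + bw^2 - d^2 = 0$, i.e.\ a cone of the type \eqref{eq.cone_confocal}, after rescaling. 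Genericity guarantees (by the definition given in the excerpt) that the quadric does not contain the point at infinity on the axis, so the $d^2$ coefficient is nonzero and this normalisation is legitimate. Reversing the substitution $x_0 = av/d$, $y_0 = bw/d$ then shows that every line of the net, represented by a point $(v,w,d)$ of the base curve $\mathcal{C} = \mathcal{H}\cap\mathcal{Z}$, is tangent to the conic \eqref{eq.conic_normalized}; hence the net is confocal.

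The main obstacle I anticipate is the converse direction, specifically justifying that a diagonal pencil forces the second quadric to be a \emph{cone} of the confocal type rather than some other diagonal quadric, and that the resulting conic is genuinely an ellipse or hyperbola rather than a degenerate object. I would handle this by a careful case analysis on the signs and vanishing of the coefficients $a,b$ after normalisation, invoking the remarks made just before the statement: the degenerate cases $a>0,\,b\to 0$ (and symmetrically) correspond to all lines passing through the two focal points and are admissible, whereas the cases $a<0,\,b\to 0$ yield a base curve consisting of only two points and so cannot arise from a proper (generic) checkerboard IC-net. The genericity hypothesis is precisely what rules out the pathological members and ensures $\mathcal{C}$ has the required two connected components, each mapping to the conic \eqref{eq.conic_normalized} with one of its two orientations. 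Assembling these observations, together with the uniqueness of the net's reconstruction from its pencil via Theorem~\ref{th.Blaschke_construction}, completes the equivalence.
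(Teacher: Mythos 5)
Your proposal is correct and follows essentially the same route as the paper, which proves the forward direction by observing that the diagonal cone \eqref{eq.cone_confocal} and the Blaschke cylinder generate the whole pencil, and handles the converse by normalising a diagonal member to a cone $av^2+bw^2=d^2$ (genericity giving a nonzero $d^2$ coefficient) and discussing the admissible sign cases of $a,b$, including the degenerate ``conics'' and the excluded cases where the base curve reduces to two points. Your write-up merely makes explicit some steps the paper leaves as remarks in the paragraph preceding the theorem.
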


The above theorem implies that a pencil of quadrics associated with a general (that is, non-normalised) confocal checkerboard IC-net is diagonalisable by Laguerre transformations of the form \eqref{eq.Euclidean_Laguerre} (Appendix) since Euclidean motions constitute particular Laguerre transformations. On the other hand, since the lines of confocal checkerboard IC-nets are tangent to two copies of the same conic which have two different orientations, a Laguerre transformation separates those two copies and one obtains a hypercycle which consists of two possibly intersecting pieces (see Figure~\ref{f.hypercycle}). It is therefore natural to investigate the question of the diagonalisability of generic pencils of quadrics which are not necessarily associated with confocal checkerboard IC-nets. Thus, we now consider a generic checkerboard IC-net with $\cal H$ being one of its corresponding generic hyperboloids so that the diagonalisability of the associated pencil of quadrics is equivalent to the diagonalisability of $\cal H$. Let ${(\tilde{Q}_{i,j})}_{ i,j=1,\ldots,4}$ be the symmetric matrix of its quadratic form in coordinates $(v,w,1,d)$. Our genericity assumption is equivalent to $\tilde{Q}_{4,4}\neq 0$. The normalisation $\tilde{Q}_{4,4}=-1$ therefore leads to
$$
\tilde{Q}=\left(\begin{matrix} \tilde{S} & a  \\[1mm]
                       a^T & -1 
  \end{matrix}\right),
$$ 
where $\tilde{S}$ is a symmetric matrix, and $a\in\R^3$. By means of a Laguerre transformation \eqref{eq.Laguerre_trafo_4x4} (Appendix) with the matrix
$$
A=\left(\begin{matrix} 1 & 0  \\[1mm]
                       a^T & 1 
  \end{matrix}\right),
$$
$\tilde{Q}$ is brought into the block diagonal form
\begin{equation}
\label{eq.Q-matrix}
Q=A^T\tilde{Q}A=\left(\begin{matrix} S & 0  \\[1mm]
                       0 & -1 
  \end{matrix}\right),
\end{equation}
where $S$ is symmetric. Furthermore, the matrix $Q$ may be diagonalised by a Laguerre transformation $A'$ only if it is of the form 
$$
A'=\left(\begin{matrix} B & 0  \\[1mm]
                       0 & 1 
  \end{matrix}\right)
$$ 
with $B\in O(2,1)$. Hence, we conclude that the pencil of quadrics associated with a generic checkerboard IC-net is diagonalisable by a Laguerre transformation if and only if there exists a $B\in O(2,1)$ such that 
$$
B^TSB
$$   
is diagonal, where $S$ is defined by \eqref{eq.Q-matrix}.

\subsection{Diagonalisation}
\label{s.diagonalisation}

In the preceding, it has been demonstrated that the question of whether or not a checkerboard IC-net may be Laguerre-transformed into a confocal checkerboard IC-net may be answered by determining the class of symmetric $3\times3$ matrices $S$ which may be diagonalised according to
\bela{E4.1}
  S \rightarrow B^TSB,\quad B\in O(2,1).
\ela
It is important to note that $S$ represents the conic intersection of the quadric
\bela{E4.1a}
  (v\,\,w\,\,1)S\left(\bear{c}v\\w\\1\ear\right) = d^2
\ela
and the plane $d=0$. Since the matrix $Z=\mbox{diag}(1,1,-1)$ representing the ``Blaschke circle'' $v^2 + w^2 = 1$ is invariant under the action of the group $O(2,1)$, the matrix $S$ is diagonalisable if and only if the one-parameter family of matrices
\bela{S1}
  S^{\lambda} = S + \lambda Z
\ela
encoding the pencil of conics $\mathcal{P}$ spanned by the conic associated with $S$ and the Blaschke circle is diagonalisable. It is noted that the roots of the characteristic cubic polynomial
\bela{S2}
  P(\lambda)= \det S^{\lambda}
\ela
correspond to the degenerate conics of $\mathcal{P}$. Furthermore, it is evident that $S$ is diagonal if and only if one conic in $\mathcal{P}$ different from the Blaschke circle (and therefore any conic) is symmetric with respect to the $v$- and $w$-axes.

We will now demonstrate that diagonalisability may be characterised in terms of the real base points of the pencil $\mathcal{P}$, that is, the points (on the Blaschke circle) common to all conics of $\mathcal{P}$ (or, equivalently, any particular pair of conics of $\mathcal{P}$). The proof of the following theorem is based on the standard classification of pencils of conics \cite{levy1964} in terms of the number and nature of the base points which are determined by the roots of the quartic equation, representing the common solutions of $v^2 + w^2=1$ and \eqref{E4.1a}$_{d=0}$. This classification is in one-to-one correspondence with the number and nature of roots of the characteristic polynomial $P(\lambda)$ associated with any pencil. The present theorem is a variant of a theorem \cite{levy1964} which states that a symmetric matrix $S$ is diagonalisable by means of a {\em projective} transformation if and only if the corresponding pencil is of type Ia, Ic, IIIa or IIIb in the classification tabled in Figure \ref{classification}. Normal forms of degenerate conics associated with this classification are depicted in Figure \ref{classification-degenerate-conics}.

\begin{figure}
\begin{center}
    \begin{tabular}{ | p{0.8cm} | p{1.9cm} | p{1cm} | p{2cm} |  p{1.5cm} |}
    \hline
    Type & Type and multiplicity of base points & \# of real base points & Type and multiplicity of degenerate conics & Type and multiplicity of roots\\ \hline\hline
      Ia &$ 1,1,1,1$ & $4$ &$\mbox{\boldmath$\times$},\mbox{\boldmath$\times$},\mbox{\boldmath$\times$}$ & $1,1,1$\\ 
      Ib &$ 1,1, (1,\bar{1})$ & $2$ &$\mbox{\boldmath$\times$},\circ,\bar{\circ}$ & $1,(1,\bar{1})$\\ 
      Ic &$ (1,\bar{1}),(1,\bar{1})$ & $0$ & $\mbox{\boldmath$\times$},\bullet,\bullet$ & $1,1,1$\\ \hline
     IIa &$ 2,1,1$ & $3$ &$2\mbox{\boldmath$\times$},\mbox{\boldmath$\times$}$ & $2,1$\\ 
     IIb &$ 2,(1,\bar{1})$ & $1$ & $2\bullet,\mbox{\boldmath$\times$}$ & $2,1$\\ \hline
     IIIa &$ 2,2$ & $2$ &$2\,\rotatebox[origin=t]{90}{\mbox{\boldmath$=$}},\mbox{\boldmath$\times$}$ & $2,1$\\ 
     IIIb &$ (2,\bar{2})$ & $0$ & $2\,\rotatebox[origin=t]{90}{\mbox{\boldmath$=$}},\bullet$ & $2,1$\\ \hline
     IV &$ 3,1$ & $2$ &$3\mbox{\boldmath$\times$}$& $3$\\ \hline
     V &$ 4$ & $1$ &$3\,\rotatebox[origin=t]{90}{\mbox{\boldmath$=$}}$ & $3$\\ \hline
    \end{tabular}
\end{center}
\caption{The classification of real pencils of conics. There exist four different types of degenerate conics. (\mbox{\boldmath$\times$}) Two real intersecting lines. ($\circ$) Two non-intersecting complex lines. ($\bullet$) Two complex conjugate lines which intersect in a real point. 
(\mbox{\boldmath$\|$}) A real double line.}
\label{classification}
\end{figure}

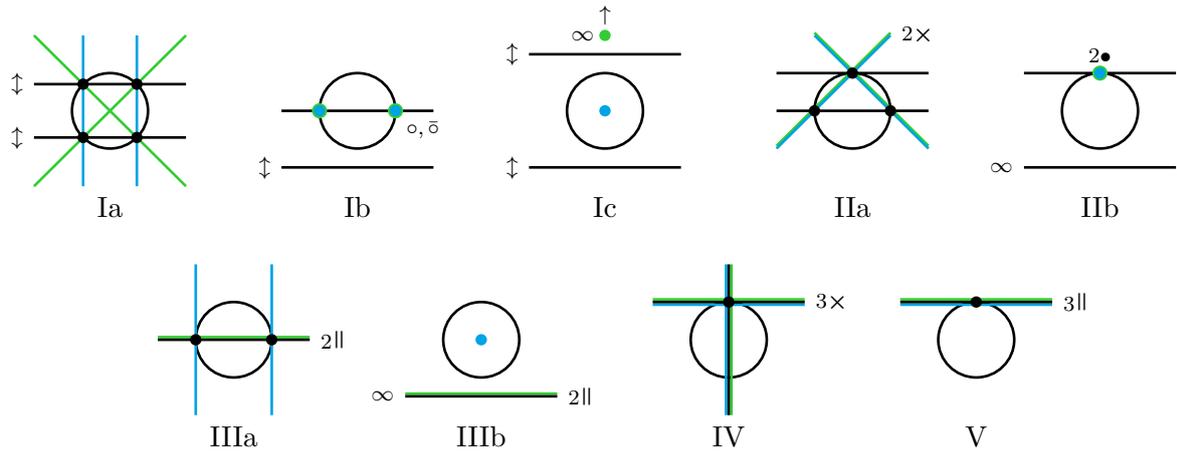
\begin{figure}
  \centering
  \begin{tikzpicture}[scale=0.5, line width=1pt]
    \useasboundingbox (-2.5,-2.5) rectangle (2.5,2.5);
    \draw (0,0) circle (1);
    \draw (-2,{1/sqrt(2)}) -- (2,{1/sqrt(2)});
    \coordinate[label=180:{\scriptsize $\updownarrow$}] (none) at (-2,{1/sqrt(2)});
    \draw (-2,{-1/sqrt(2)}) -- (2,{-1/sqrt(2)});
    \coordinate[label=180:{\scriptsize $\updownarrow$}] (none) at (-2,{-1/sqrt(2)});
    \draw [color=Cerulean] ({-1/sqrt(2)},-2) -- ({-1/sqrt(2)},2);
    \draw [color=Cerulean] ({1/sqrt(2)},-2) -- ({1/sqrt(2)},2);
    \draw [color=LimeGreen] (-2,-2) -- (2,2);
    \draw [color=LimeGreen] (2,-2) -- (-2,2);
    \fill ({1/sqrt(2)},{1/sqrt(2)}) circle (0.15);
    \fill (-{1/sqrt(2)},{1/sqrt(2)}) circle (0.15);
    \fill ({1/sqrt(2)},-{1/sqrt(2)}) circle (0.15);
    \fill (-{1/sqrt(2)},-{1/sqrt(2)}) circle (0.15);
    \coordinate[label=-90:{Ia}] (none) at (0,-2);
  \end{tikzpicture}
  \hspace{0.5cm}
  \begin{tikzpicture}[scale=0.5, line width=1pt]
    \useasboundingbox (-2.5,-2.5) rectangle (2.5,2.5);
    \draw (0,0) circle (1);
    \draw (-2,0) -- (2,0);
    \draw (-2,-1.5) -- (2,-1.5);
    \coordinate[label=180:{\scriptsize $\updownarrow$}] (none) at (-2,-1.5);
    \fill [color=LimeGreen] (-1,0) circle (0.2);
    \fill [color=Cerulean] (-1,0) circle (0.15);
    \fill [color=LimeGreen] (1,0) circle (0.2);
    \fill [color=Cerulean] (1,0) circle (0.15);
    \coordinate[label=-45:{\scriptsize $\circ,\bar{\circ}$}] (none) at (1,0);
    \coordinate[label=-90:{Ib}] (none) at (0,-2);
  \end{tikzpicture}
  \hspace{0.5cm}
  \begin{tikzpicture}[scale=0.5, line width=1pt]
    \useasboundingbox (-2.5,-2.5) rectangle (2.5,2.5);
    \draw (0,0) circle (1);
    \draw (-2,1.5) -- (2,1.5);
    \coordinate[label=180:{\scriptsize $\updownarrow$}] (none) at (-2,1.5);
    \draw (-2,-1.5) -- (2,-1.5);
    \coordinate[label=180:{\scriptsize $\updownarrow$}] (none) at (-2,-1.5);
    \fill [color=Cerulean] (0,0) circle (0.15);
    \fill [color=LimeGreen] (0,2) circle (0.15);
    \coordinate[label=180:{\scriptsize $\infty$}] (none) at (0,2);
    \coordinate[label=90:{\scriptsize $\uparrow$}] (none) at (0,2);
    \coordinate[label=-90:{Ic}] (none) at (0,-2);
  \end{tikzpicture}
  \hspace{0.5cm}
  \begin{tikzpicture}[scale=0.5, line width=1pt]
    \useasboundingbox (-2.5,-2.5) rectangle (2.5,2.5);
    \draw (0,0) circle (1);
    \draw (-2,1) -- (2,1);
    \draw (-2,0) -- (2,0);
    \draw [color=LimeGreen, transform canvas={yshift=1pt}] (-2,-1) -- (1,2);
    \draw [color=LimeGreen, transform canvas={yshift=1pt}] (2,-1) -- (-1,2);
    \draw [color=Cerulean] (-2,-1) -- (1,2);
    \draw [color=Cerulean] (2,-1) -- (-1,2);
    \coordinate[label=0:{\scriptsize $2\mbox{\boldmath$\times$}$}] (none) at (1,2);
    \fill (-1,0) circle (0.15);
    \fill (1,0) circle (0.15);
    \fill (0,1) circle (0.15);
    \coordinate[label=-90:{IIa}] (none) at (0,-2);
  \end{tikzpicture}
  \hspace{0.5cm}
  \begin{tikzpicture}[scale=0.5, line width=1pt]
    \useasboundingbox (-2.5,-2.5) rectangle (2.5,2.5);
    \draw (0,0) circle (1);
    \draw (-2,1) -- (2,1);
    \draw (-2,-1.5) -- (2,-1.5);
    \coordinate[label=180:{\scriptsize $\infty$}] (none) at (-2,-1.5);
    \fill [color=LimeGreen] (0,1) circle (0.2);
    \fill [color=Cerulean] (0,1) circle (0.15);
    \coordinate[label=90:{\scriptsize $2\bullet$}] (none) at (0,1);
    \coordinate[label=-90:{IIb}] (none) at (0,-2);
  \end{tikzpicture}
  
  \vspace{0.5cm}
  
  \begin{tikzpicture}[scale=0.5, line width=1pt]
    \useasboundingbox (-2.5,-2.5) rectangle (2.5,2.5);
    \draw (0,0) circle (1);
    \draw [color=LimeGreen, transform canvas={yshift=1pt}] (-2,0) -- (2,0);
    \draw (-2,0) -- (2,0);
    \coordinate[label=0:{\scriptsize $2\,\rotatebox[origin=t]{90}{\mbox{\boldmath$=$}}$}] (none) at (2,0);
    \draw [color=Cerulean] (-1,-2) -- (-1,2);
    \draw [color=Cerulean] (1,-2) -- (1,2);
    \fill (-1,0) circle (0.15);
    \fill (1,0) circle (0.15);
    \coordinate[label=-90:{IIIa}] (none) at (0,-2);
  \end{tikzpicture}
  \hspace{0.5cm}
  \begin{tikzpicture}[scale=0.5, line width=1pt]
    \useasboundingbox (-2.5,-2.5) rectangle (2.5,2.5);
    \draw (0,0) circle (1);
    \draw [color=LimeGreen, transform canvas={yshift=1pt}] (-2,-1.5) -- (2,-1.5);
    \draw (-2,-1.5) -- (2,-1.5);
    \coordinate[label=180:{\scriptsize $\infty$}] (none) at (-2,-1.5);
    \coordinate[label=0:{\scriptsize $2\,\rotatebox[origin=t]{90}{\mbox{\boldmath$=$}}$}] (none) at (2,-1.5);
    \fill [color=Cerulean] (0,0) circle (0.15);
    \coordinate[label=-90:{IIIb}] (none) at (0,-2);
  \end{tikzpicture}
  \hspace{0.5cm}
  \begin{tikzpicture}[scale=0.5, line width=1pt]
    \useasboundingbox (-2.5,-2.5) rectangle (2.5,2.5);
    \draw (0,0) circle (1);
    \draw [color=LimeGreen, transform canvas={yshift=1pt}] (-2,1) -- (2,1);
    \draw [color=Cerulean, transform canvas={yshift=-1pt}] (-2,1) -- (2,1);
    \draw (-2,1) -- (2,1);
    \coordinate[label=0:{\scriptsize $3\mbox{\boldmath$\times$}$}] (none) at (2,1);
    \draw [color=LimeGreen, transform canvas={xshift=1pt}] (0,-2) -- (0,2);
    \draw [color=Cerulean, transform canvas={xshift=-1pt}] (0,-2) -- (0,2);
    \draw (0,-2) -- (0,2);
    \fill (0,1) circle (0.15);
    \coordinate[label=-90:{IV}] (none) at (0,-2);
  \end{tikzpicture}
  \hspace{0.5cm}
  \begin{tikzpicture}[scale=0.5, line width=1pt]
    \useasboundingbox (-2.5,-2.5) rectangle (2.5,2.5);
    \draw (0,0) circle (1);
    \draw [color=LimeGreen, transform canvas={yshift=1pt}] (-2,1) -- (2,1);
    \draw [color=Cerulean, transform canvas={yshift=-1pt}] (-2,1) -- (2,1);
    \draw (-2,1) -- (2,1);
    \coordinate[label=0:{\scriptsize $3\,\rotatebox[origin=t]{90}{\mbox{\boldmath$=$}}$}] (none) at (2,1);
    \fill (0,1) circle (0.15);
    \coordinate[label=-90:{V}] (none) at (0,-2);
  \end{tikzpicture}
  \caption{
    Degenerate conics of real pencils of conics containing the Blaschke circle.
    For each type, a normal form is shown with respect to projective transformations which preserve the Blaschke circle.
    For types I, the normal form still depends on one parameter, which is indicated by $\updownarrow$.
    The $\infty$ symbol indicates that the line (point) is chosen to be the line at infinity (the point at infinity in the designated direction).
  }
\label{classification-degenerate-conics}
\end{figure}

\begin{theorem}
The matrix $S$ may be diagonalised if and only if the associated pencil of conics $\mathcal{P}$ has four, two double or no real base points, that is, if $\mathcal{P}$ is of type Ia, Ic, IIIa or IIIb.
\end{theorem}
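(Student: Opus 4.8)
The plan is to reduce diagonalisability of $S$ under $O(2,1)$ to simultaneous diagonalisability of the pair $(S,Z)$ under a single real congruence, and then to read off the admissible cases from the classification of Levy \cite{levy1964} together with the dictionary between pencil types and real base points recorded in Figure~\ref{classification}. The guiding observation is that every member $S^{\lambda}=S+\lambda Z$ of $\mathcal{P}$ is a linear combination of $S$ and $Z$, so a congruence diagonalises $S$ while fixing $Z$ exactly when it simultaneously diagonalises the whole pencil $\mathcal{P}$. Since $O(2,1)$ is precisely the stabiliser of $Z$, the only genuinely new point relative to Levy's (projective) theorem is that the diagonalising congruence can be taken to preserve $Z$ literally.

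First I would establish this bridge. One direction is immediate: any $B\in O(2,1)$ satisfies $B^{T}ZB=Z$, so if $B^{T}SB$ is diagonal then $B$ diagonalises both $S$ and $Z$. For the converse, suppose a real invertible $P$ gives $P^{T}SP=\Sigma$ and $P^{T}ZP=\Delta$ with $\Sigma,\Delta$ diagonal. By Sylvester's law of inertia $\Delta$ has the same signature $(2,1)$ as $Z$ and is in particular nonsingular, so I may rescale the columns of $P$ by $D=\mathrm{diag}(|\Delta_{11}|^{-1/2},|\Delta_{22}|^{-1/2},|\Delta_{33}|^{-1/2})$ and apply a permutation $\Pi$ ordering the signs as $(+,+,-)$; then $B=PD\Pi$ obeys $B^{T}ZB=Z$, i.e.\ $B\in O(2,1)$, while $B^{T}SB=\Pi^{T}D\Sigma D\Pi$ stays diagonal. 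Hence $S$ is $O(2,1)$-diagonalisable precisely when $\mathcal{P}$ is simultaneously diagonalisable by a real projective transformation, which is the hypothesis governed by Levy's theorem.

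Next I would characterise the simultaneously diagonalisable pencils by the standard theory of symmetric matrix pencils: $\mathcal{P}$ is diagonalisable over $\R$ if and only if every root of the characteristic cubic $P(\lambda)=\det S^{\lambda}$ is real and semisimple, in the sense that the corank of the degenerate conic $S^{\lambda_{0}}$ equals the multiplicity of $\lambda_{0}$. Thus a double root must carry a conic of corank two, i.e.\ a genuine double line, and a triple root a conic of corank three, which is impossible for a proper pencil (it would force $S^{\lambda_{0}}=0$). Comparing with the degenerate-conic column of Figure~\ref{classification}, the admissible configurations are exactly Ia and Ic (three simple real roots, degeneracies $\times,\times,\times$ and $\times,\bullet,\bullet$) and IIIa and IIIb (the double root giving a double line). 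Type Ib fails because its complex-conjugate root pair cannot be realised over $\R$; types IIa, IIb (double root of corank one, a pair of intersecting lines) and IV, V (triple root) fail because the repeated root is not semisimple, so $S^{\lambda}$ is not even diagonalisable over $\mathbb{C}$. Translating Ia, Ic, IIIa, IIIb into the base-point column then yields four real base points (Ia), two coincident double base points (IIIa), or no real base points (Ic and IIIb), which is exactly the assertion.

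The step I expect to demand the most care is the conceptual heart of paragraph three, namely the separation of types II and III: among pencils with a double base point, diagonalisability is equivalent to the double root being semisimple, and this is precisely the geometric distinction between a pair of intersecting lines and a double line among the degenerate conics. By contrast, the technical-looking rescaling in paragraph two is harmless, since $P^{T}ZP$ is forced to have signature $(2,1)$ by Sylvester's law, so no case analysis is needed to land in $O(2,1)$; the only remaining bookkeeping is to verify the corank-versus-multiplicity table against Figure~\ref{classification}, which is routine.
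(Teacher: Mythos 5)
Your proof is correct, but it takes a genuinely different route from the paper's. The paper argues geometrically: for the forward direction it observes that a diagonal $S$ forces the pencil to be symmetric about the $v$- and $w$-axes, which is incompatible with the excluded types, and the converse is a case-by-case construction of an explicit $O(2,1)$ map for each of Ia, Ic, IIIa, IIIb (a projective transformation placing the base points, respectively the degenerate conic, in symmetric position, composed with an affine map restoring the Blaschke circle, plus a hyperbolic rotation in case Ic). You instead reduce $O(2,1)$-diagonalisability to simultaneous diagonalisability of the pair $(S,Z)$ by an arbitrary real congruence --- your Sylvester rescaling argument for this equivalence is sound --- and then invoke the classical criterion for a pair of real quadratic forms with one form nonsingular: simultaneous diagonalisability holds iff all roots of $\det(S+\lambda Z)$ are real and the corank of $S^{\lambda_0}$ equals the multiplicity of each root. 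Reading the coranks off the degenerate-conic column of Figure~\ref{classification} (the types $\times$ and $\bullet$ have corank one, a double line corank two) then singles out exactly Ia, Ic, IIIa, IIIb, and the base-point counts follow. Your route is uniform, avoids all case-by-case constructions, and explains in one stroke why Ib fails (a complex-conjugate pair of roots) and why IIa, IIb, IV, V fail (a repeated root that is not semisimple); its only external input is the standard simultaneous-diagonalisation theorem for pencils of forms, which you state but do not prove. What the paper's constructive route buys in exchange is the explicit normal forms $w^2=1$, $w^2=a^2$, $\tilde z^2=0$, which are reused immediately afterwards to derive the normal forms of the associated pencils of quadrics and the classification of hypercycle base curves.
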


\begin{proof}
If $S$ is diagonal then the associated pencil $\mathcal{P}$ is symmetric with respect to the $v$- and $w$-axes. This symmetry cannot be present in types other than Ia, Ic, IIIa or IIIb. Conversely, it is necessary to show that if the pencil is of any of those four types then $\mathcal{P}$ can be made symmetric or, equivalently, $S^{\lambda}$ is diagonalisable for one $\lambda\neq\infty$.
\medskip

\noindent
{\sc Ia) Four real base points.} In this case, we may apply a projective transformation which transforms the Blaschke circle into an ellipse and maps the four base points to the four vertices of a rectangle which is symmetric with respect to the $v$- and $w$-axes. An appropriate subsequent affine transformation then maps the ellipse to the Blaschke circle without affecting the symmetry of the rectangle. The composition of these two transformations constitutes an $O(2,1)$ transformations since it leaves the Blaschke circle invariant. This compound transformation results in a symmetric distribution of the base points and, hence, the transformed pencil is symmetric.  
\medskip

\noindent 
{\sc IIIa) Two real double base points.} In this case, there exists a degenerate conic consisting of two real lines which touch the Blaschke circle. A suitable combination of a projective and an affine transformation sends the vertex of this degenerate conic to infinity and maps the intermediate ellipse back to the Blaschke circle. The degenerate conic may therefore be transformed into $w^2=1$ which is symmetric.
\medskip

\noindent
{\sc Ic) Four complex base points.} In this case, there exists a degenerate conic consisting of two intersecting real lines which do not intersect the Blaschke circle. A suitable combination of a projective and an affine transformation sends the vertex of this degenerate conic to infinity and maps the intermediate ellipse back to the Blaschke circle. The degenerate conic may therefore be transformed into $(w-a)(w+b)=0$, where $a,b>1$. It is not difficult to show that a suitable hyperbolic rotation in the $(\tilde{w},\tilde{z})$-plane (with $(\tilde{w},\tilde{z})\sim(w,1)$) leads to $a=b$ so that the degenerate conic simplifies to $w^2=a^2$ which is symmetric.
\medskip

\noindent
{\sc IIIb) Two complex double base points.} In this case, there exists a degenerate conic consisting of two coinciding real lines which do not intersect the Blaschke circle. A suitable combination of a projective and an affine transformation sends this double line to infinity and maps the intermediate ellipse back to the Blaschke circle. The transformed double line is therefore given by $\tilde{z}^2=0$ which is symmetric.
\end{proof}

In summary, it has been established that the matrix $S$ may be diagonalised if and only if the associated pencil of conics $\mathcal{P}$ has four, two double or no real base points. This leads to the following characterisation.

\begin{theorem}
A generic checkerboard IC-net is Laguerre-equivalent to a confocal checkerboard IC-net if and only if the hypercycle base curve of the associated pencil of quadrics consists of two non-degenerate loops on the Blaschke cylinder which are either disjoint or transversal.
\end{theorem}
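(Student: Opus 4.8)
The plan is to combine the reduction already carried out in this section with a direct local-to-global analysis of the hypercycle base curve. By the pre-normalisation and Theorem~\ref{prop:confocal}, a generic checkerboard IC-net is Laguerre-equivalent to a confocal one precisely when the symmetric matrix $S$ of \eqref{eq.Q-matrix} admits a diagonalisation \eqref{E4.1}, and by the preceding theorem this happens exactly when the pencil $\mathcal{P}$ is of type Ia, Ic, IIIa or IIIb. It therefore remains to show that these four types are exactly those for which the hypercycle base curve $\mathcal{C}$ consists of two non-degenerate loops which are disjoint or transversal; the assertion then follows by transitivity of these equivalences.

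First I would realise $\mathcal{C}$ as a branched double cover of the Blaschke circle. Parametrising $v^2+w^2=1$ by $(v,w)=(\cos\theta,\sin\theta)$ and setting $f(\theta)=(v\,\,w\,\,1)\,S\,(v\,\,w\,\,1)^T$ for the restriction of the conic \eqref{E4.1a}$_{d=0}$ to the circle, the quadric \eqref{E4.1a} cuts the Blaschke cylinder in
\[
  \mathcal{C}=\{(\theta,d):d^2=f(\theta)\},
\]
so $\mathcal{C}$ is swept out by the two sheets $d=\pm\sqrt{f(\theta)}$, real where $f\geq0$ and glued along $d=0$ at the zeros of $f$. These zeros are precisely the real base points of $\mathcal{P}$, and the order of vanishing of $f$ equals the multiplicity of the corresponding base point. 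The decisive observation is that the local shape of $\mathcal{C}$ is dictated by this order: a simple zero is a smooth fold (vertical tangent, hence a nonsingular point of $\mathcal{C}$), a double zero a transversal node, a triple zero a cusp, and a quadruple zero a tangential self-contact. This is the dictionary translating the base-point data of Figure~\ref{classification} into the geometry of $\mathcal{C}$.

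Running through the classification with this dictionary, and discarding the degenerate alternatives in which $f$ fails to be positive somewhere (empty or two-point $\mathcal{C}$), one reads off the claimed correspondence. For no real base points (Ic, IIIb) one has $f>0$ everywhere and the sheets $d=\pm\sqrt{f}$ are two disjoint smooth loops; for four simple real base points (Ia) the sign of $f$ alternates, $\{f>0\}$ has two components, and $\mathcal{C}$ is two disjoint smooth loops lying over them; for two double real base points (IIIa) one has $f\geq0$ with two double zeros, and $\mathcal{C}$ is two smooth loops meeting transversally in the two nodes. By contrast, two simple real base points (Ib) give $\{f>0\}$ a single arc and hence a single loop, a triple base point (IV) introduces a cusp, and a quadruple base point (V) yields two loops meeting \emph{tangentially} rather than transversally; finally the configurations carrying exactly one real double base point, whether alone (IIb) or together with simple ones (IIa), produce a single immersed loop with one self-crossing.

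The step requiring the most care---and the main obstacle---is distinguishing type IIIa from types IIa and IIb, since all three create transversal nodes of $\mathcal{C}$, so that a naive reading of ``transversal'' would wrongly admit the latter two. The resolution is to fix the meaning of ``two non-degenerate loops'' as the requirement that the normalisation of $\mathcal{C}$ have two connected components, each a smooth immersed circle, the loops meeting only in transversal double points. Concretely, I would analyse the monodromy of the double cover $\mathcal{C}\to S^1$, $(\theta,d)\mapsto\theta$: the cover splits into two loops exactly when $f$ admits a single-valued smooth square root on the circle, which in the node-only case ($f\geq0$, zeros all double) forces an \emph{even} number of double zeros. Thus the two double zeros of IIIa split $\mathcal{C}$ into two loops crossing transversally, whereas the single double zero of IIb, and likewise the one interior double zero in the positive arc of IIa, leaves the cover connected, i.e.\ a figure-eight that is one loop, not two. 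Together with the observation that the cusp of IV violates non-degeneracy and the tacnode of V violates transversality, this pins down the admissible types as exactly Ia, Ic, IIIa and IIIb, completing the proof.
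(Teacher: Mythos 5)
Your argument is correct and reaches the paper's conclusion, but it executes the decisive case analysis by a genuinely different route. The paper, like you, reduces the claim via the pre-normalisation, Theorem~\ref{prop:confocal} and the diagonalisation theorem to deciding which of the types Ia--V yield the required base curves; it then settles this by computing an explicit normal form of a degenerate quadric for every type (the table of Figure~\ref{normal_forms}) and reading off the shape of each base curve from these formulae and the accompanying pictures. You instead work intrinsically: writing $\mathcal{C}=\{d^2=f(\theta)\}$ for the restriction $f$ of \eqref{E4.1a} to the Blaschke circle, you identify the real base points and their multiplicities with the real zeros of $f$ and their orders, translate each order into a local model of $\mathcal{C}$ (fold, node, cusp, tacnode), and decide the global component count by the parity of the number of nodes traversed. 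That last observation --- a curve $d^2=f$ with $f\ge 0$ and only double zeros splits into two loops precisely when the number of double zeros is even --- is exactly what separates IIIa from IIa and IIb, and your identification of this as the main obstacle is on target; the paper never needs to articulate it because the normal forms make the answer visible. What the paper's route buys is the explicit normal forms themselves, which feed directly into the elliptic-function parametrisation of Section~\ref{s.confocal}; what your route buys is a self-contained argument that does not rest on inspecting figures.

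One caution on the write-up: each conic-pencil type splits into two quadric-pencil types according to the sign of the $d^2$ term (Ic$_\pm$, IIIa$_\pm$, IIIb$_\pm$, \dots), and your sentence ``for no real base points (Ic, IIIb) one has $f>0$ everywhere'' holds only for the sign giving a non-empty curve: for Ic$_+$ and IIIb$_-$ one has $f<0$ and $\mathcal{C}=\emptyset$, for IIIa$_-$ one gets two isolated points, and the $+$/$-$ variants of IIa, IIb and V likewise differ (e.g.\ IIa$_+$ is a smooth loop plus an isolated point rather than a figure-eight). Your blanket clause discarding the cases where $f$ is nowhere positive does the necessary work --- a checkerboard IC-net places infinitely many points on $\mathcal{C}$, so such pencils cannot arise --- but it should read ``$f$ nowhere positive'' rather than ``fails to be positive somewhere'', and it is worth stating that these discarded sign choices are precisely why the admissible list is Ia, Ic$_-$, IIIa$_+$, IIIb$_+$ rather than all sign variants of the four diagonalisable types. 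None of this affects the conclusion, since every such degenerate case is excluded on both sides of the equivalence.
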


\begin{proof}
Since the statement to be proven is invariant under Laguerre transformations, it suffices to examine the nature of the base curves of ``normal forms'' of generic pencils of quadrics. These are determined by the normal forms of the associated pencils of conics. The normal forms of degenerate conics which together with the Blaschke circle span pencils of conics of types Ia, Ic and  IIIa, IIIb
have been derived in the proof of the preceding theorem. In a similar manner, the remaining types of pencils may be treated. Accordingly, one obtains the classification of pencils of quadrics displayed in Figure \ref{normal_forms}. The associated base curves are also depicted in Figure \ref{normal_forms}. It is noted that the base curves of types Ic$_+$ and IIIb$_-$ are empty and, therefore, do not correspond to a hypercycle. The types Ia and Ic$_-$ are associated with a double hyperbola and a double ellipse respectively, while the types IIIa$_+$, IIIa$_-$ and IIIb$_+$ correspond to the special cases of the hypercycle consisting of two points, a double line and a double circle respectively. As pointed out in connection with Theorem \ref{prop:confocal}, the case of a double line does not give rise to a proper checkerboard IC-net. Accordingly, we find that confocal checkerboard IC-nets are captured by the types Ia, Ic$_-$, IIIa$_+$ and IIIb$_+$ which confirms the assertion of the theorem.
\end{proof}

\begin{remark}
A stronger notion of genericity is obtained by considering only those checkerboard IC-nets which are generically generated by means of the iterative geometric construction of checkerboard IC-nets described in Section~4. In this case, generic checkerboard IC-nets are of types Ia, Ib$_{\pm}$ and Ic$_-$ so that the case of two transversal components of the hypercycle base curve cannot occur and must be removed from the above theorem.
\end{remark}

\newpage

\def\picwidth{0.12\textwidth}%
\def\tw{2.1}
\def\hw{1.6cm}
\begin{center}
\begin{longtable}[b]{ | m{0.8cm} | m{5.5cm} | l |}
  \hline
  Type & Normal form of a degenerate quadric & Type and multiplicity of degenerate quadrics\\ \hline\hline
  \vspace{0.5cm}&\vspace{0.5cm} & 
  \multirow{2}{*}{
    \begin{tikzpicture}
      \useasboundingbox ({-0.54*\tw},{-0.55*\tw}) rectangle ({3.5*\tw},{0.45*\tw});
      \node[inner sep=0pt] (none) at (0,  0)     {\includegraphics[width=\picwidth]{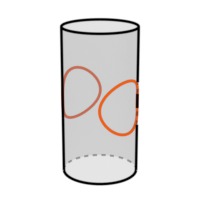}};
      \node[inner sep=0pt] (none) at (\tw,0)     {\includegraphics[width=\picwidth]{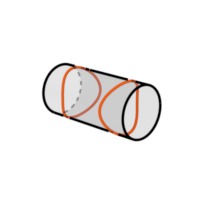}};
      \node[inner sep=0pt] (none) at ({2*\tw},0) {\includegraphics[width=\picwidth]{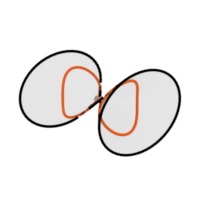}};
      \node[inner sep=0pt] (none) at ({3*\tw},0) {\includegraphics[width=\picwidth]{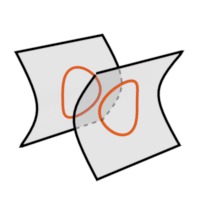}};
      \coordinate[label=-135:{\scriptsize\textcolor{gray}{$\star$}}] (none) at ({3.45*\tw},{0.45*\tw});
    \end{tikzpicture}}\\
  Ia\vspace{0.9cm} & $w^2-a^2= \varpm d^2,\quad 0<a<1$\vspace{0.9cm} & \\
  \hdashline
  \multirow{2}{*}{Ib$_{\pm}$} & \multirow{2}{*}{$w(bw-1)=\pm d^2,\quad |b|<1$} &
  \begin{tikzpicture}
    \useasboundingbox ({-0.6*\tw},{-0.5*\tw}) rectangle ({3.5*\tw},{0.52*\tw});
    \node[inner sep=0pt] (none) at (0,  0)     {\includegraphics[width=\picwidth]{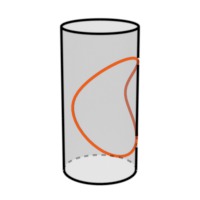}};
    \node[inner sep=0pt] (none) at (\tw,0)     {\includegraphics[width=\picwidth]{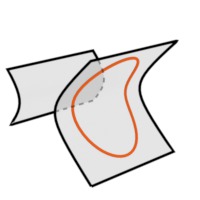}};
    \node[inner sep=0pt] (none) at ({2*\tw},0) {\includegraphics[width=\picwidth]{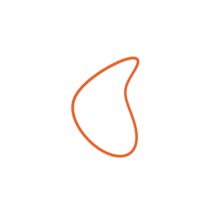}};
    \node[inner sep=0pt] (none) at ({3*\tw},0) {\includegraphics[width=\picwidth]{pics/classification/Ib_plus_2}};
    \draw[thick, <->] ({2.35*\tw},0) -- ({2.65*\tw},0);
    \node[align=center, font=\tiny] at ({2*\tw},{-0.38*\tw}) {complex\\ cone};
    \node[align=center, font=\tiny] at ({3*\tw},{-0.38*\tw}) {complex\\ cone};
    \coordinate[label=90:{\scriptsize c.c.}] (none) at ({2.5*\tw},0);
    \coordinate[label=-135:{\scriptsize\textcolor{gray}{$\star$}}] (none) at ({1.45*\tw},{0.45*\tw});
    \node[font=\scriptsize] at ({-0.4*\tw},0.) {$(+)$};
  \end{tikzpicture}\\
  & &
  \begin{tikzpicture}
    \useasboundingbox ({-0.6*\tw},{-0.51*\tw}) rectangle ({3.5*\tw},{0.5*\tw});
    \node[inner sep=0pt] (none) at (0,0)       {\includegraphics[width=\picwidth]{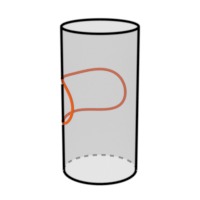}};
    \node[inner sep=0pt] (none) at (\tw,0)     {\includegraphics[width=\picwidth]{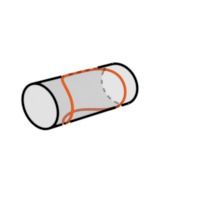}};
    \node[inner sep=0pt] (none) at ({2*\tw},0) {\includegraphics[width=\picwidth]{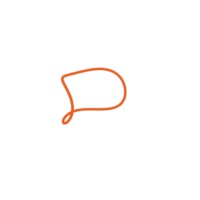}};
    \node[inner sep=0pt] (none) at ({3*\tw},0) {\includegraphics[width=\picwidth]{pics/classification/Ib_minus_2}};
    \draw[thick, <->] ({2.35*\tw},0) -- ({2.65*\tw},0);
    \node[align=center, font=\tiny] at ({2*\tw},{-0.26*\tw}) {complex\\ cone};
    \node[align=center, font=\tiny] at ({3*\tw},{-0.26*\tw}) {complex\\ cone};
    \coordinate[label=90:{\scriptsize c.c.}] (none) at ({2.5*\tw},0);
    \coordinate[label=-135:{\scriptsize\textcolor{gray}{$\star$}}] (none) at ({1.45*\tw},{0.45*\tw});
    \node[font=\scriptsize] at ({-0.4*\tw},0.) {$(-)$};
  \end{tikzpicture}\\
  \hdashline
  \multirow{2}{*}{Ic$_{\pm}$} & \multirow{2}{*}{$w^2-a^2=\pm d^2,\quad a>1$} &
  \begin{tikzpicture}
    \useasboundingbox ({-0.6*\tw},{-0.5*\tw}) rectangle ({3.5*\tw},{0.52*\tw});
    \node[inner sep=0pt] (none) at (0,  0)     {\includegraphics[width=\picwidth]{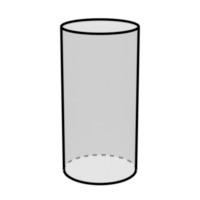}};
    \node[inner sep=0pt] (none) at (\tw,0)     {\includegraphics[width=\picwidth]{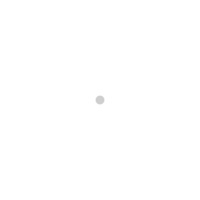}};
    \node[inner sep=0pt] (none) at ({2*\tw},0) {\includegraphics[width=\picwidth]{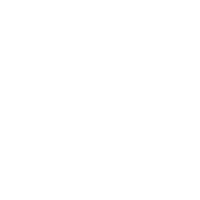}};
    \node[inner sep=0pt] (none) at ({3*\tw},0) {\includegraphics[width=\picwidth]{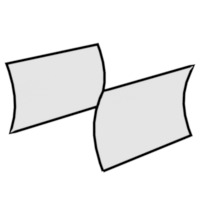}};
    \node[align=center, font=\scriptsize] at (\tw,{-0.2*\tw}) {imag.\\ cone};
    \node[align=center, font=\scriptsize] at ({2*\tw},{-0.2*\tw}) {imag.\\ cylinder};
    \coordinate[label=-135:{\scriptsize\textcolor{gray}{$\star$}}] (none) at ({3.45*\tw},{0.45*\tw});
    \node[font=\scriptsize] at ({-0.4*\tw},0.) {$(+)$};
  \end{tikzpicture}\\
  & &
  \begin{tikzpicture}
    \useasboundingbox ({-0.6*\tw},{-0.51*\tw}) rectangle ({3.5*\tw},{0.5*\tw});
    \node[inner sep=0pt] (none) at (0,0)       {\includegraphics[width=\picwidth]{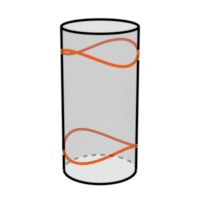}};
    \node[inner sep=0pt] (none) at (\tw,0)     {\includegraphics[width=\picwidth]{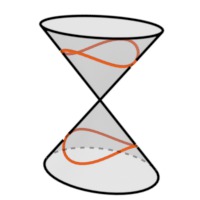}};
    \node[inner sep=0pt] (none) at ({2*\tw},0) {\includegraphics[width=\picwidth]{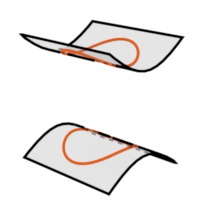}};
    \node[inner sep=0pt] (none) at ({3*\tw},0) {\includegraphics[width=\picwidth]{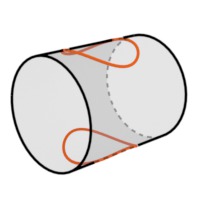}};
    \coordinate[label=-135:{\scriptsize\textcolor{gray}{$\star$}}] (none) at ({3.45*\tw},{0.45*\tw});
    \node[font=\scriptsize] at ({-0.4*\tw},0.) {$(-)$};
  \end{tikzpicture}\\
  \hline
  \multirow{2}{*}{IIa$_{\pm}$} & \multirow{2}{*}{$w(w-1) = \pm d^2$} &
  \begin{tikzpicture}
    \useasboundingbox ({-0.6*\tw},{-0.5*\tw}) rectangle ({3.5*\tw},{0.52*\tw});
    \node[inner sep=0pt] (none) at (0,  0)     {\includegraphics[width=\picwidth]{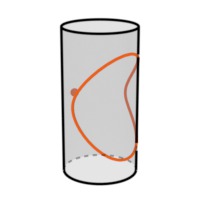}};
    \node[inner sep=0pt] (none) at (\tw,0)     {\includegraphics[width=\picwidth]{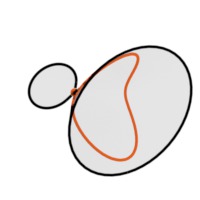}};
    \node[inner sep=0pt] (none) at ({2*\tw},0) {\includegraphics[width=\picwidth]{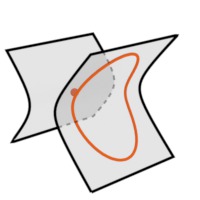}};
    \coordinate[label=135:{\scriptsize (2)}] (none) at ({1.5*\tw},{-0.5*\tw});
    \coordinate[label=-135:{\scriptsize\textcolor{gray}{$\star$}}] (none) at ({2.45*\tw},{0.45*\tw});
    \node[font=\scriptsize] at ({-0.4*\tw},0.) {$(+)$};
  \end{tikzpicture}\\
  & &
  \begin{tikzpicture}
    \useasboundingbox ({-0.6*\tw},{-0.51*\tw}) rectangle ({3.5*\tw},{0.5*\tw});
    \node[inner sep=0pt] (none) at (0,0)       {\includegraphics[width=\picwidth]{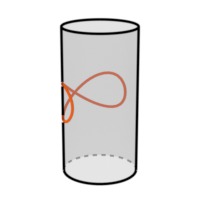}};
    \node[inner sep=0pt] (none) at (\tw,0)     {\includegraphics[width=\picwidth]{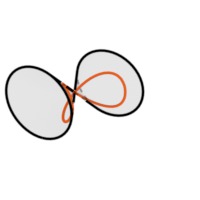}};
    \node[inner sep=0pt] (none) at ({2*\tw},0) {\includegraphics[width=\picwidth]{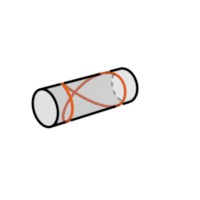}};
    \coordinate[label=135:{\scriptsize (2)}] (none) at ({1.5*\tw},{-0.5*\tw});
    \coordinate[label=-135:{\scriptsize\textcolor{gray}{$\star$}}] (none) at ({2.45*\tw},{0.45*\tw});
    \node[font=\scriptsize] at ({-0.4*\tw},0.) {$(-)$};
  \end{tikzpicture}\\
  \hdashline
  \multirow{2}{*}{IIb$_{\pm}$} & \multirow{2}{*}{$(w-1)=\pm d^2$} &
  \begin{tikzpicture}
    \useasboundingbox ({-0.6*\tw},{-0.5*\tw}) rectangle ({3.5*\tw},{0.52*\tw});
    \node[inner sep=0pt] (none) at (0,  0)     {\includegraphics[width=\picwidth]{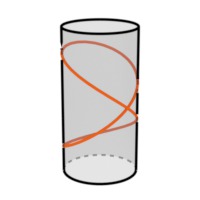}};
    \node[inner sep=0pt] (none) at (\tw,0)     {\includegraphics[width=\picwidth]{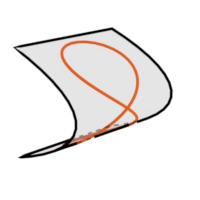}};
    \node[inner sep=0pt] (none) at ({2*\tw},0) {\includegraphics[width=\picwidth]{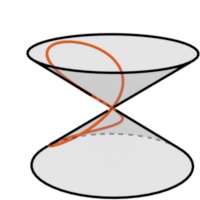}};
    \coordinate[label=135:{\scriptsize (2)}] (none) at ({2.7*\tw},{-0.5*\tw});
    \coordinate[label=-135:{\scriptsize\textcolor{gray}{$\star$}}] (none) at ({1.45*\tw},{0.45*\tw});
    \node[font=\scriptsize] at ({-0.4*\tw},0.) {$(+)$};
  \end{tikzpicture}\\
  & &
  \begin{tikzpicture}
    \useasboundingbox ({-0.6*\tw},{-0.51*\tw}) rectangle ({3.5*\tw},{0.5*\tw});
    \node[inner sep=0pt] (none) at (0,0)       {\includegraphics[width=\picwidth]{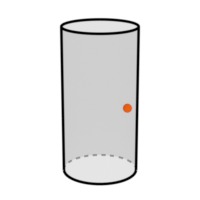}};
    \node[inner sep=0pt] (none) at (\tw,0)     {\includegraphics[width=\picwidth]{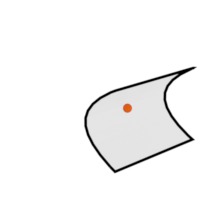}};
    \node[inner sep=0pt] (none) at ({2*\tw},0) {\includegraphics[width=\picwidth]{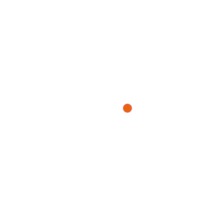}};
    \node[align=center, font=\scriptsize] at ({2*\tw},{-0.2*\tw}) {imag.\\ cone};
    \coordinate[label=135:{\scriptsize (2)}] (none) at ({2.5*\tw},{-0.5*\tw});
    \coordinate[label=-135:{\scriptsize\textcolor{gray}{$\star$}}] (none) at ({1.45*\tw},{0.45*\tw});
    \node[font=\scriptsize] at ({-0.4*\tw},0.) {$(-)$};
  \end{tikzpicture}\\
  \hline
  \pagebreak
  \hline
  \multirow{2}{*}{IIIa$_{\pm}$} & \multirow{2}{*}{$w^2=\pm d^2$} &
  \begin{tikzpicture}
    \useasboundingbox ({-0.6*\tw},{-0.5*\tw}) rectangle ({3.5*\tw},{0.52*\tw});
    \node[inner sep=0pt] (none) at (0,  0)     {\includegraphics[width=\picwidth]{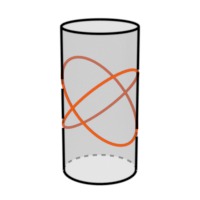}};
    \node[inner sep=0pt] (none) at (\tw,0)     {\includegraphics[width=\picwidth]{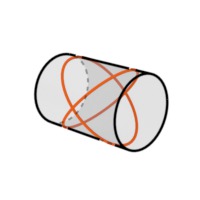}};
    \node[inner sep=0pt] (none) at ({2*\tw},0) {\includegraphics[width=\picwidth]{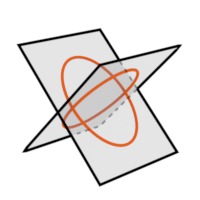}};
    \coordinate[label=135:{\scriptsize (2)}] (none) at ({2.5*\tw},{-0.5*\tw});
    \coordinate[label=-135:{\scriptsize\textcolor{gray}{$\star$}}] (none) at ({2.45*\tw},{0.45*\tw});
    \node[font=\scriptsize] at ({-0.4*\tw},0.) {$(+)$};
  \end{tikzpicture}\\
  & &
  \begin{tikzpicture}
    \useasboundingbox ({-0.6*\tw},{-0.51*\tw}) rectangle ({3.5*\tw},{0.5*\tw});
    \node[inner sep=0pt] (none) at (0,0)       {\includegraphics[width=\picwidth]{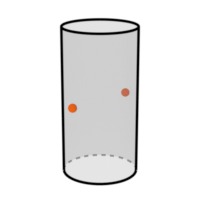}};
    \node[inner sep=0pt] (none) at (\tw,0)     {\includegraphics[width=\picwidth]{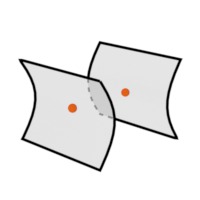}};
    \node[inner sep=0pt] (none) at ({2*\tw},0) {\includegraphics[width=\picwidth]{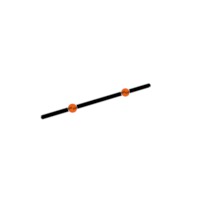}};
    \node[align=center, font=\scriptsize] at ({2*\tw},{-0.2*\tw}) {c.c.\\ planes};
    \coordinate[label=135:{\scriptsize (2)}] (none) at ({2.5*\tw},{-0.5*\tw});
    \coordinate[label=-135:{\scriptsize\textcolor{gray}{$\star$}}] (none) at ({2.45*\tw},{0.45*\tw});
    \node[font=\scriptsize] at ({-0.4*\tw},0.) {$(-)$};
  \end{tikzpicture}\\
  \hdashline
  \multirow{2}{*}{IIIb$_{\pm}$} & \multirow{2}{*}{$1= \pm d^2$} &
  \begin{tikzpicture}
    \useasboundingbox ({-0.6*\tw},{-0.5*\tw}) rectangle ({3.5*\tw},{0.52*\tw});
    \node[inner sep=0pt] (none) at (0,  0)     {\includegraphics[width=\picwidth]{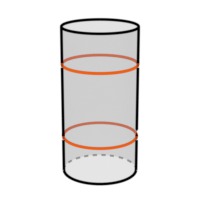}};
    \node[inner sep=0pt] (none) at (\tw,0)     {\includegraphics[width=\picwidth]{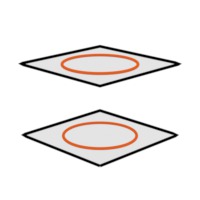}};
    \node[inner sep=0pt] (none) at ({2*\tw},0) {\includegraphics[width=\picwidth]{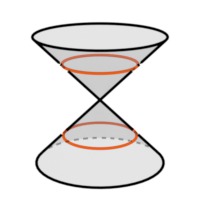}};
    \coordinate[label=135:{\scriptsize (2)}] (none) at ({1.5*\tw},{-0.5*\tw});
    \coordinate[label=-135:{\scriptsize\textcolor{gray}{$\star$}}] (none) at ({1.45*\tw},{0.45*\tw});
    \node[font=\scriptsize] at ({-0.4*\tw},0.) {$(+)$};
  \end{tikzpicture}\\
  & &
  \begin{tikzpicture}
    \useasboundingbox ({-0.6*\tw},{-0.51*\tw}) rectangle ({3.5*\tw},{0.5*\tw});
    \node[inner sep=0pt] (none) at (0,0)       {\includegraphics[width=\picwidth]{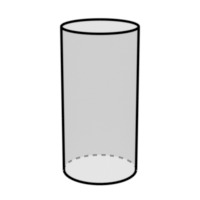}};
    \node[inner sep=0pt] (none) at (\tw,0)     {\includegraphics[width=\picwidth]{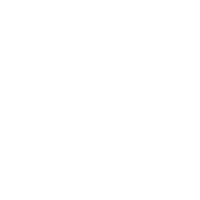}};
    \node[inner sep=0pt] (none) at ({2*\tw},0) {\includegraphics[width=\picwidth]{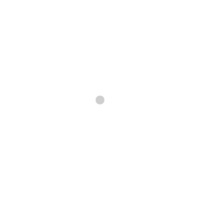}};
    \node[align=center, font=\scriptsize] at ({1*\tw},{-0.2*\tw}) {c.c.\\ parallel planes};
    \node[align=center, font=\scriptsize] at ({2*\tw},{-0.2*\tw}) {imag.\\ cone};
    \coordinate[label=135:{\scriptsize (2)}] (none) at ({1.5*\tw},{-0.52*\tw});
    \coordinate[label=-135:{\scriptsize\textcolor{gray}{$\star$}}] (none) at ({1.45*\tw},{0.45*\tw});
    \node[font=\scriptsize] at ({-0.4*\tw},0.) {$(-)$};
  \end{tikzpicture}\\
  \hline
  \vspace{0.5cm}&\vspace{0.5cm} & 
  \multirow{2}{*}{
    \begin{tikzpicture}
      \useasboundingbox ({-0.54*\tw},{-0.55*\tw}) rectangle ({3.5*\tw},{0.45*\tw});
      \node[inner sep=0pt] (none) at (0,0)       {\includegraphics[width=\picwidth]{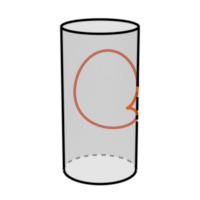}};
      \node[inner sep=0pt] (none) at (\tw,0)     {\includegraphics[width=\picwidth]{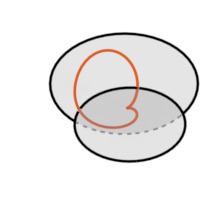}};
      \coordinate[label=135:{\scriptsize (3)}] (none) at ({1.5*\tw},{-0.5*\tw});
      \coordinate[label=-135:{\scriptsize\textcolor{gray}{$\star$}}] (none) at ({1.45*\tw},{0.45*\tw});
    \end{tikzpicture}}\\
  IV\vspace{0.9cm} & $(w-1)v = \varpm d^2$\vspace{0.9cm} & \\
  \hline
  \multirow{2}{*}{V$_{\pm}$} & \multirow{2}{*}{$(w-1)^2 = \pm d^2$} &
  \begin{tikzpicture}
    \useasboundingbox ({-0.6*\tw},{-0.5*\tw}) rectangle ({3.5*\tw},{0.52*\tw});
    \node[inner sep=0pt] (none) at (0,  0)     {\includegraphics[width=\picwidth]{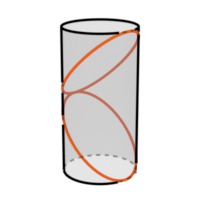}};
    \node[inner sep=0pt] (none) at (\tw,0)     {\includegraphics[width=\picwidth]{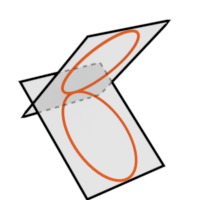}};
    \coordinate[label=135:{\scriptsize (3)}] (none) at ({1.5*\tw},{-0.52*\tw});
    \coordinate[label=-135:{\scriptsize\textcolor{gray}{$\star$}}] (none) at ({1.45*\tw},{0.45*\tw});
    \node[font=\scriptsize] at ({-0.4*\tw},0.) {$(+)$};
  \end{tikzpicture}\\
  & &
  \begin{tikzpicture}
    \useasboundingbox ({-0.6*\tw},{-0.51*\tw}) rectangle ({3.5*\tw},{0.5*\tw});
    \node[inner sep=0pt] (none) at (0,0)       {\includegraphics[width=\picwidth]{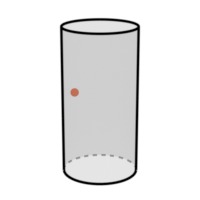}};
    \node[inner sep=0pt] (none) at (\tw,0)     {\includegraphics[width=\picwidth]{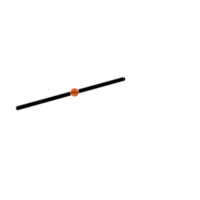}};
    \node[align=center, font=\scriptsize] at ({1*\tw},{-0.2*\tw}) {c.c.\\ planes};
    \coordinate[label=135:{\scriptsize (3)}] (none) at ({1.5*\tw},{-0.5*\tw});
    \coordinate[label=-135:{\scriptsize\textcolor{gray}{$\star$}}] (none) at ({1.45*\tw},{0.45*\tw});
    \node[font=\scriptsize] at ({-0.4*\tw},0.) {$(-)$};
  \end{tikzpicture}\\
  \hline
  \vspace{0.5cm}&\vspace{0.5cm} & 
  \multirow{2}{*}{
    \begin{tikzpicture}
      \useasboundingbox ({-0.54*\tw},{-0.55*\tw}) rectangle ({3.5*\tw},{0.45*\tw});
      \node[inner sep=0pt] (none) at (0,  0)     {\includegraphics[width=\picwidth]{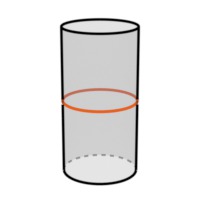}};
      \node[inner sep=0pt] (none) at (\tw,0)     {\includegraphics[width=\picwidth]{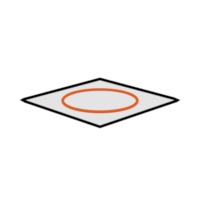}};
      \node[align=center, font=\scriptsize] at ({1*\tw},{-0.2*\tw}) {double plane};
      \coordinate[label=135:{\scriptsize (3)}] (none) at ({1.5*\tw},{-0.5*\tw});
      \coordinate[label=-135:{\scriptsize\textcolor{gray}{$\star$}}] (none) at ({1.45*\tw},{0.45*\tw});
    \end{tikzpicture}}\\
  O\vspace{0.9cm} & $0 = \varpm d^2$\vspace{0.9cm} & \\
  \hline
\end{longtable}
\end{center}
\begingroup
\captionof{figure}{
  Normal forms of degenerate quadrics of generic pencils, corresponding to the classification of different types of planar pencils of conics listed and illustrated in Figures \ref{classification} and \ref{classification-degenerate-conics}. These are obtained by ``adding'' a $\pm d^2$ term to the algebraic representation of the degenerate conics depicted in Figure \ref{classification-degenerate-conics}. However, 
  type O corresponds to the case $S=0$ which does not encode a pencil of conics in the $(v,w)$-plane.
  Note that types Ia, IV, and O each generate only one class, that is, different signs in $\pm d^2$ lead to equivalent pencils.
  Types Ic$_+$ and IIIb$_-$ correspond to empty hypercycles.
  For each type, a normal form of one degenerate quadric is given which spans the generic pencil together with the Blaschke cylinder.
  Furthermore, for each type, all degenerate quadrics are shown together with the hypercycle base curve.
  The multiplicity is given if greater than 1, and a $\star$ indicates that the degenerate quadric corresponds to the normal form recorded in the second column of the table.
  \label{normal_forms}
}
\endgroup

\addtocounter{table}{-1}%


\section{An elliptic function representation of confocal checkerboard IC-nets}
\label{s.confocal}

Explicit parametrisations of confocal checkerboard IC-nets and their Laguerre transforms may now be obtained by parametrising the hypercycle base curves  associated with a pencil of quadrics in terms of elliptic functions. 

\subsection{Elliptic confocal checkerboard IC-nets}

It is recalled that ``elliptic'' confocal checkerboard IC-nets, that is checkerboard IC-nets the lines of which are tangent to an ellipse
\bela{E6.1}
  \frac{x^2}{\alpha^2} + \frac{y^2}{\beta^2} = 1,
\ela
correspond to a pencil of quadrics 
\bela{E6.7}
  (\alpha^2 + \lambda)v^2 + (\beta^2  + \lambda)w^2 = d^2 + \lambda
\ela
generated by an elliptic cone and the Blaschke cylinder, namely
\bela{E6.2}
  \alpha^2v^2 + \beta^2w^2 = d^2,\quad v^2 + w^2 = 1.
\ela
In the following, we assume that $\alpha^2\geq\beta^2$ without loss of generality. The associated base curve is the set of all points $(v,w,d)$ obeying the pair \eqref{E6.2}. 
This corresponds to types Ic$_-$ or IIIb$_+$ of the classification of pencils of quadrics with the two components of the base curve being mapped into each other by $d\rightarrow -d$.
The nature of any quadric $\mathcal{H}$ in the pencil \eqref{E6.7} depends on the value of the associated parameter $\lambda$. Accordingly, there exist two cases.

\subsubsection{The case \boldmath $\lambda\geq 0$}
\label{s.ellipticg0}

In this case, the quadric $\mathcal{H}$ constitutes a one-sheeted hyperboloid (or a cone for $\lambda=0$) which is aligned with the Blaschke cylinder. 
If we now parametrise $v$ and $w$ in terms of Jacobi elliptic functions \cite{NIST} $\jac{cn}$ and $\jac{sn}$ respectively then the general solution of \eqref{E6.2} is given by
\bela{E6.4}
  \bv_{\pm}(\psi) =  \begin{pmatrix} v(\psi)\\ w(\psi)\\ d_\pm(\psi) \end{pmatrix} = \begin{pmatrix} \jac{cn}(\psi,k)\\ \jac{sn}(\psi,k)\\ \pm\alpha\jac{dn}(\psi,k)\end{pmatrix},\quad
  k = \sqrt{1-\frac{\beta^2}{\alpha^2}},
\ela
where $\psi$ constitutes the parameter along the (two components of the) base curve. Any pair of  points on the two components of the base curve may be represented by
\bela{E6.5}
  \bv_\pm(\psi_0),\quad \bv_\mp(\psi_1),\quad \psi_1 = s + \psi_0
\ela
for any fixed choice of the above signs. If we demand that, for fixed $s$, the one-parameter family of lines
\bela{E6.6}
  \bl(\psi_0,t) = \bv_\pm(\psi_0) + t [\bv_\mp(\psi_1) - \bv_\pm(\psi_0)],\quad t\in\R
\ela
consist of generators of the quadric $\mathcal{H}$ then we obtain a relationship between the parameters $s$ and $\lambda$ which is to be independent of $\psi_0$. Indeed, insertion of $\bl$ into \eqref{E6.7} produces
\bela{E6.8}
  v(\psi_0)v(\psi_1)(\alpha^2 + \lambda) + w(\psi_0)w(\psi_1)(\beta^2 + \lambda) = d_\pm(\psi_0)d_\mp(\psi_1) + \lambda.
\ela
It is observed that, geometrically, the latter merely represents the fact that the points $\bv_\pm(\psi_0)$ and $\bv_\mp(\psi_1)$ are required to lie in the tangent planes to the hyperboloid \eqref{E6.7} at those two points. Now, comparison with the general identity \cite{NIST}
 \begin{gather}\label{E6.9}
  c_s\jac{sn}(\psi_0,k)\jac{sn}(\psi_1,k) + c_c\jac{cn}(\psi_0,k)\jac{cn}(\psi_1,k) = c_d\jac{dn}(\psi_0,k)\jac{dn}(\psi_1,k) + 1\\
    c_s = \jac{dc}(s,k) + c_d(1-k^2)\jac{nc}(s,k), \quad c_c = \jac{nc}(s,k) + c_d\jac{dc}(s,k)
 \end{gather}
for elliptic functions with $\psi_1 = \psi_0 + s$ shows that it is required that
\bela{E6.10}
 \begin{split}
  \alpha^2 + \lambda & = \lambda\jac{nc}(s,k) - \alpha^2\jac{dc}(s,k)\\
  \beta^2 + \lambda & = \lambda\jac{dc}(s,k) - \alpha^2(1-k^2)\jac{nc}(s,k).
 \end{split}
\ela
Since the latter two conditions coincide, we conclude that
\bela{E6.11}
  \lambda = \alpha^2\frac{\jac{dc}(s,k) + 1}{\jac{nc}(s,k) - 1} = \alpha^2\jac{cs}^2\left(\frac{s}{2},k\right) \geq 0
\ela
so that any family of generators of the quadric \eqref{E6.7} for $\lambda\geq0$ is encoded via the parametrisation \eqref{E6.4}-\eqref{E6.6} in an appropriately chosen parameter $s$. In other words, a translation of the argument $\psi$ in the parametrisation \eqref{E6.4} by some fixed quantity $s$ together with a change of the component of the base curve gives rise to a family of generators of a unique quadric $\mathcal{H}$ of the pencil. The second family of generators is obtained by letting $s\rightarrow -s$.

\subsubsection{The case \boldmath $-\alpha^2 \leq \lambda \leq -\beta^2$}

This case corresponds to the remaining one-sheeted hyperboloids (and an elliptic and hyperbolic cylinder for $\lambda=-\alpha^2$ and $\lambda=-\beta^2$ respectively) of the pencil \eqref{E6.7} which are aligned with the $v$-axis. 
It is now convenient to introduce an additional parameter $\epsilon$ in the parametrisation \eqref{E6.4} according to
\bela{E6.12}
  \bv^\epsilon_{\pm}(\psi) = \begin{pmatrix} \jac{cn}(\psi,k)\\ \epsilon\jac{sn}(\psi,k)\\ \pm\alpha\jac{dn}(\psi,k)\end{pmatrix},\quad \epsilon^2 = 1
\ela
and consider two points
\bela{E6.13}
  \bv^\epsilon_\pm(\psi_0),\quad \bv^{-\epsilon}_\pm(\psi_1),\quad \psi_1 = s + \psi_0
\ela
on any fixed component of the base curve. Then, the lines
\bela{E6.14}
  \bl(\psi_0,t) = \bv^\epsilon_\pm(\psi_0) + t [\bv^{-\epsilon}_\pm(\psi_1) - \bv^\epsilon_\pm(\psi_0)],\quad t\in\R
\ela
turn out to be generators on any hyperboloid $\mathcal{H}$ given by \eqref{E6.7} for
\bela{E6.15}
  \lambda = -\beta^2\jac{nd}^2\left(\frac{s}{2},k\right).
\ela
Once again, for any fixed $\lambda$ in the current range, there exists an $|s|$ such that the generators of the associated quadric $\mathcal{H}$ which pass through the base curve are parametrised by \eqref{E6.14}. The two signs of $s$ correspond to the two families of generators of $\mathcal{H}$.

\subsubsection{Construction of elliptic confocal checkerboard IC-nets}

In order to illustrate the construction of checkerboard IC-nets from pencils of quadrics presented in Section 3, we consider two quadrics $\calH$ and $\tilde{\calH}$ of the pencil \eqref{E6.7} corresponding to a pair of parameters $s$  and $\tilde{s}$ which are related to $\lambda,\tilde{\lambda}>0$ by \eqref{E6.11}. It is recalled that any point $(v,w,d)$ of the base curve is in one-to-one correspondence with a line
\bela{E6.16}
  vx + wy = d.
\ela
In this sense, we refer to $(v,w,d)$  as a point on the Blaschke cylinder or a line in the $(x,y)$-plane.
The first (``vertical'') family of lines of the elliptic confocal checkerboard is then given by
\bela{E6.17}
 \begin{split}
  \bv_{2n}^v & = \bv_+(\psi_0^v + n(s + \tilde{s})),\\
   \bv_{2n+1}^v & = \bv_-(\psi_0^v + n(s + \tilde{s}) + s),
 \end{split}
 \quad n\in\Z,
\ela
where $\psi_0^v$ is arbitrary and corresponds to one of the ``initial conditions'' of the construction. The second (``horizontal'') family of lines is associated with the two other families of generators of $\calH$ and $\tilde{\calH}$ encoded in the parameters $-s$ and $-\tilde{s}$ respectively.
Accordingly, the construction of confocal checkerboard IC-nets in this case may be summarised as follows.
\begin{theorem}\label{ellipticcheckerboards}
For any pairs of parameters $\alpha\geq\beta>0$ and $\psi_0^v,\psi_0^h$ and $s,\tilde{s}$, the lines 
\bela{EE6.17}
 \begin{split}
  \bv_{2n}^v & = \bv_+(\psi_0^v + n(s + \tilde{s})),\\
   \bv_{2n+1}^v & = \bv_-(\psi_0^v + n(s + \tilde{s}) + s),\\
   \bv_{2n}^h & = \bv_+(\psi_0^h - n(s + \tilde{s})),\\
   \bv_{2n+1}^h & = \bv_-(\psi_0^h - n(s + \tilde{s}) - s),
 \end{split}
 \qquad
  \begin{split}
  \bv_{\pm}(\psi) &= \begin{pmatrix} \jac{cn}(\psi,k)\\ \jac{sn}(\psi,k)\\ \pm\alpha\jac{dn}(\psi,k)\end{pmatrix}\\
  k &= \sqrt{1-\frac{\beta^2}{\alpha^2}}.
  \end{split}
\ela
form a (confocal) checkerboard IC-net and are tangent to the ellipse
\bela{EE6.1}
  \frac{x^2}{\alpha^2} + \frac{y^2}{\beta^2} = 1.
\ela
The parameters $s,\tilde{s}$ determine the associated hyperboloids $\mathcal{H},\tilde{\mathcal{H}}$ of the pencil
\bela{EE6.7}
  (\alpha^2 + \lambda)v^2 + (\beta^2  + \lambda)w^2 = d^2 + \lambda
\ela
according to
\bela{EE6.11}
  \lambda = \alpha^2\jac{cs}^2\left(\frac{s}{2},k\right).
\ela
\end{theorem}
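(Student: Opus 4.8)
The plan is to recognise the parametrisation \eqref{EE6.17} as a concrete instance of the construction of checkerboard IC-nets in the Blaschke cylinder model (Theorem~\ref{th.Blaschke_construction}) and to read off the three assertions from the bookkeeping of the parameters. First I would check that every point $\bv_\pm(\psi)$ lies on the base curve $\mathcal{C}$: the identity $\jac{cn}^2+\jac{sn}^2=1$ together with $\alpha^2\jac{dn}^2=\alpha^2\jac{cn}^2+\beta^2\jac{sn}^2$ (the latter from $\jac{dn}^2=1-k^2\jac{sn}^2$ and $k^2=1-\beta^2/\alpha^2$) shows that $(v,w,d)=\bv_\pm(\psi)$ satisfies both equations~\eqref{E6.2}. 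Since a point of $\mathcal{C}$ corresponds via \eqref{E6.16} to a line $vx+wy=d$ whose coefficients obey the tangency condition $\alpha^2v^2+\beta^2w^2=d^2$, every line of the net is tangent to the ellipse~\eqref{EE6.1}; this already establishes confocality.

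The core of the argument is the parity bookkeeping. Consecutive vertical points $\bv^v_n,\bv^v_{n+1}$ sit on opposite components of $\mathcal{C}$ (the subscript alternates between $+$ and $-$) and their arguments differ by $s$ when $n$ is even and by $\tilde{s}$ when $n$ is odd. By the computation of Section~\ref{s.ellipticg0} leading to \eqref{E6.11}, the segment joining $\bv_\pm(\psi_0)$ to $\bv_\mp(\psi_0+s)$ is a generator of the hyperboloid~\eqref{E6.7} with $\lambda=\alpha^2\jac{cs}^2(s/2,k)$, which is precisely \eqref{EE6.11}; hence the generators $L_n=(\bv^v_n,\bv^v_{n+1})$ alternate between $\mathcal{H}$ (shift $s$) and $\tilde{\mathcal{H}}$ (shift $\tilde{s}$). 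The same computation applied to the horizontal family shows that $M_n=(\bv^h_n,\bv^h_{n+1})$ alternates between the \emph{same} two hyperboloids with shifts $-s$ and $-\tilde{s}$. The decisive observation is that $\jac{cs}^2(s/2,k)$ is even in $s$, so the shift $-s$ yields the same value of $\lambda$ as $+s$: the horizontal generators lie on $\mathcal{H}$ and $\tilde{\mathcal{H}}$ too, but in the \emph{opposite} ruling to the vertical ones. Declaring the positive-shift ruling the $L$-family and the negative-shift ruling the $M$-family on each hyperboloid, the data $\mathcal{H},\tilde{\mathcal{H}}$ with these markings and the two initial points $\bv^v_0,\bv^h_0$ reproduce exactly the alternating construction preceding Theorem~\ref{th.Blaschke_construction}, whence \eqref{EE6.17} is a checkerboard IC-net.

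The step I expect to be most delicate is matching this parameter bookkeeping to the checkerboard combinatorics, which can also be verified directly. A quadrilateral with vertical sides $\bv^v_n,\bv^v_{n+1}$ and horizontal sides $\bv^h_m,\bv^h_{m+1}$ carries a circle in oriented contact iff its four lines are coplanar in the Blaschke model, i.e.\ iff the generators $L_n$ and $M_m$ intersect. Two generators of one hyperboloid drawn from opposite rulings always intersect; by the bookkeeping above, $L_n$ and $M_m$ are of this type exactly when $n\equiv m\pmod 2$, and these are precisely the ``black'' quadrilaterals $(\ell_{2k},\ell_{2k+1},m_{2n},m_{2n+1})$ and $(\ell_{2k-1},\ell_{2k},m_{2n-1},m_{2n})$ required in the definition. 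Thus the incircle property holds for every black quadrilateral. Together with the confocality established in the first paragraph and the identity $\lambda=\alpha^2\jac{cs}^2(s/2,k)$ of \eqref{E6.11}, which fixes $\mathcal{H},\tilde{\mathcal{H}}$ in terms of $s,\tilde{s}$, this completes the proof. The whole difficulty is therefore concentrated in aligning the four sign/shift patterns ($\bv_\pm$ versus $\bv_\mp$ and $\pm s,\pm\tilde{s}$) with the two rulings of the two hyperboloids, the evenness of $\lambda$ in $s$ being exactly what guarantees that the vertical and horizontal families are woven onto a single common pair of hyperboloids.
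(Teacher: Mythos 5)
Your proposal is correct and takes essentially the same route as the paper, which presents Theorem~\ref{ellipticcheckerboards} as a summary of the computation in Section~\ref{s.ellipticg0} (establishing \eqref{E6.11} and that $s\to-s$ switches rulings) combined with the general Blaschke-cylinder construction of Theorem~\ref{th.Blaschke_construction}. You in fact supply more explicit detail than the paper does on the parity bookkeeping and on matching intersecting generator pairs to the black quadrilaterals of the checkerboard combinatorics.
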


``Embedded'' elliptic confocal checkerboard IC-nets are obtained by requiring periodicity, that is,
\bela{E6.19}
  s + \tilde{s} = 4\sfK + \frac{4\sfK}{N}, \quad N\in\N,
 \ela
where the quarter-period $\sfK(k)$ of the Jacobi elliptic functions is given by the complete elliptic integral of the first kind, and demanding that the two families of lines $\bv^v$ and $\bv^h$ coincide up to their orientation. The latter may be achieved by relating the parameters $\psi_0^v$ and $\psi_0^h$ according to
\bela{E6.21}
  \psi_0^v = 2\sfK + \psi_0^h - s
\ela
so that
\bela{E6.20}
  \bv_{2n}^v = -\bv_{-2n+1}^h,\quad \bv_{2n+1}^v = -\bv_{-2n}^h.
\ela
If we now parametrise the constraint \eqref{E6.19} by
\bela{E6.22}
  s = 2\sfK + \frac{4\sfK}{N} - \kappa,\quad \tilde{s} = 2\sfK + \kappa,
\ela
where $\kappa$ is the arbitrary parameter, then
\bela{E6.23}
  \psi_0^h = \psi_0^v + \frac{4\sfK}{N} - \kappa.
\ela
For $\kappa=0$, the lines $\bv_{2n}^v$ and $\bv_{2n-1}^v$ coincide up to their orientation and the quadric $\tilde{\mathcal{H}}$ becomes the cone \eqref{E6.2}$_1$ since $\tilde{s}=2\sfK$ so that $\tilde{\lambda}=0$. Hence, as discussed in Section 3, an elliptic IC-net is obtained as depicted in Figure~\ref{global1} (left) for $N=32$ and $\psi_0^v=0.2$. Here, $\alpha=2$ and $\beta=1$.
As $\kappa$ increases, the coinciding lines separate and the circles of zero radius between the coinciding lines enlarge so that a non-degenerate confocal checkerboard emerges with $\tilde{\mathcal{H}}$ being a proper hyperboloid. 
An elliptic confocal checkerboard IC-net for $\kappa=0.1$ is displayed in Figure \ref{global1} (right).
\begin{figure}
  \centering
  \raisebox{-0.5\height}{\includegraphics[width=0.49\textwidth]{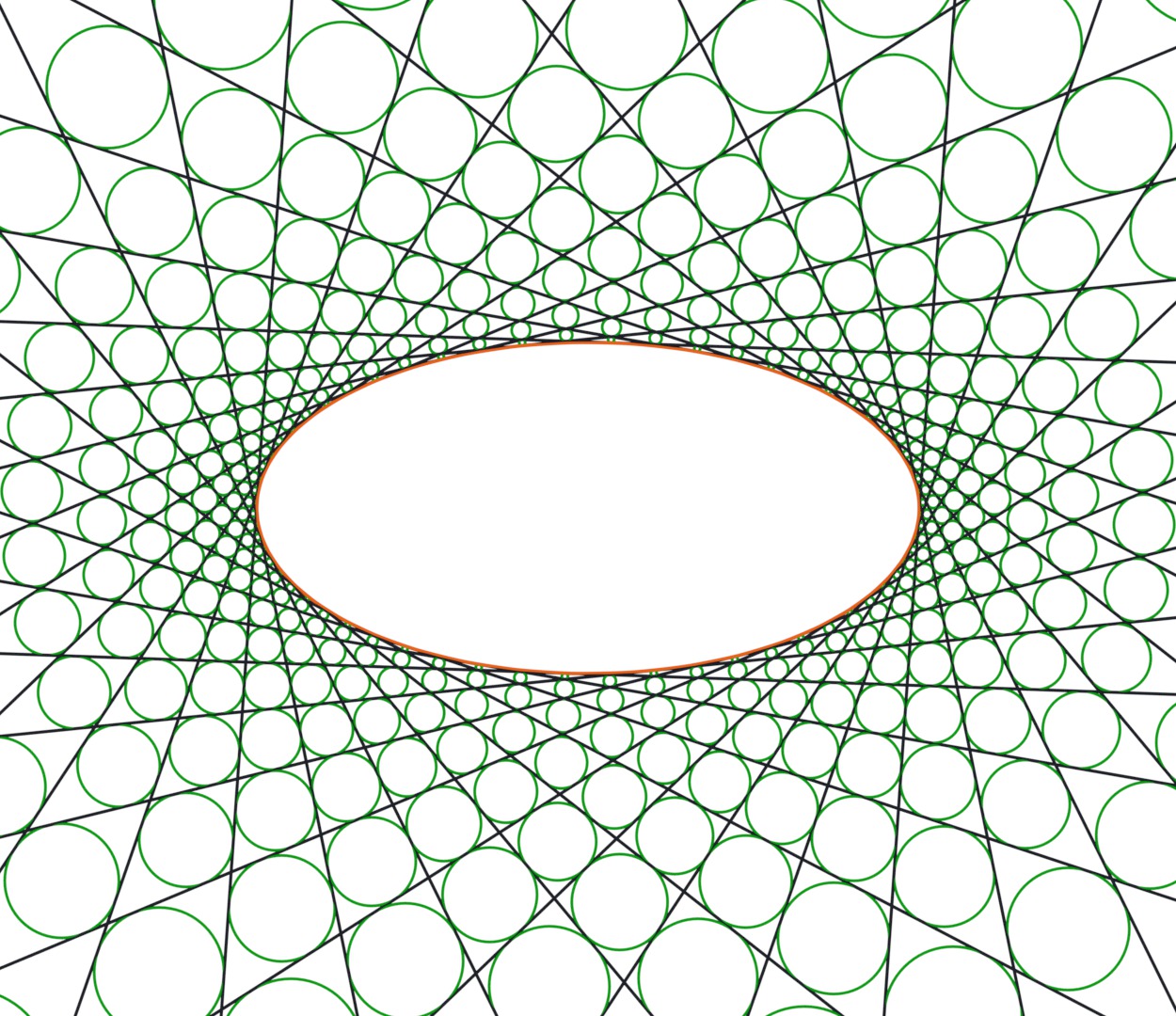}}
  \raisebox{-0.5\height}{\includegraphics[width=0.49\textwidth]{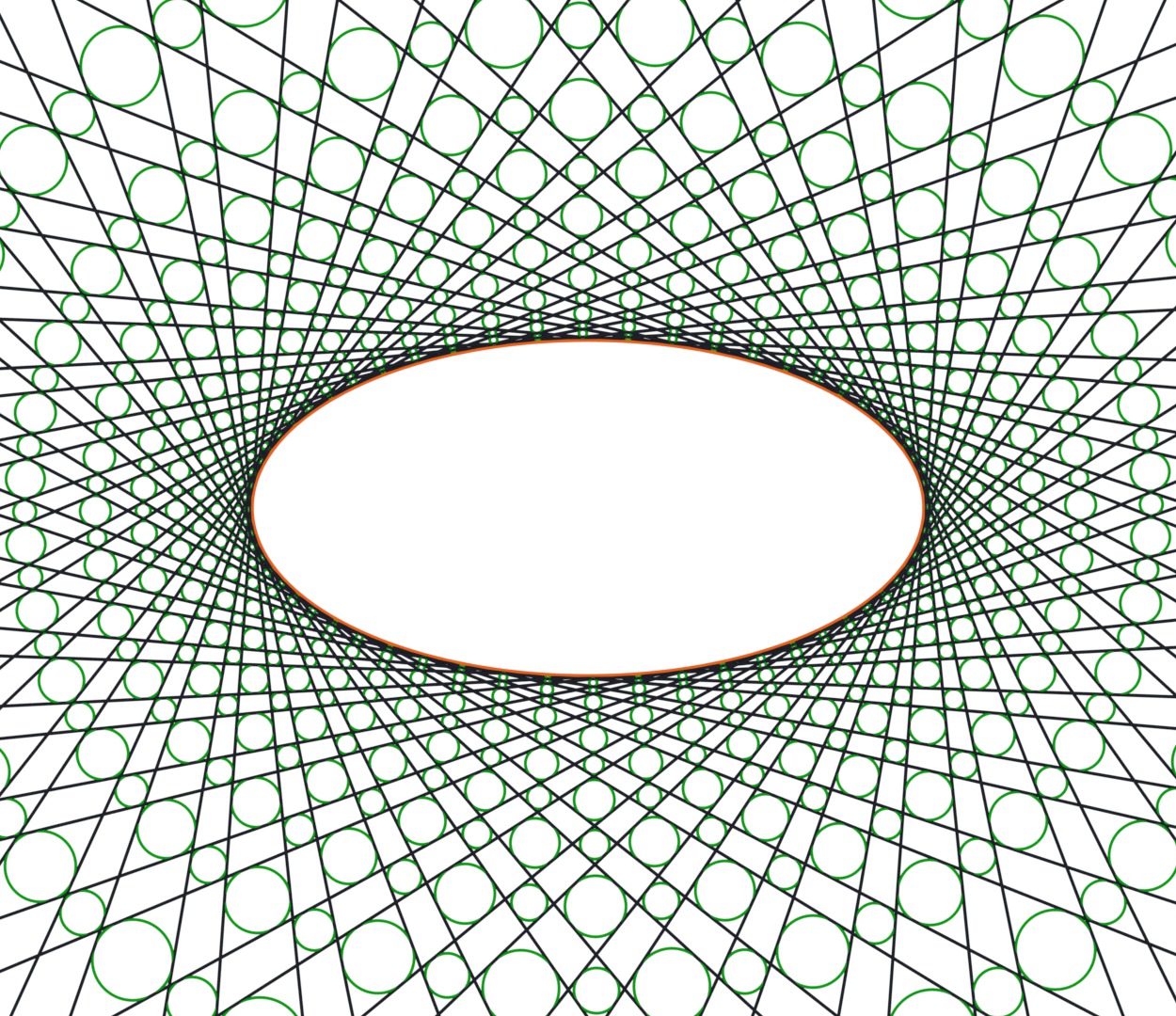}}
  \caption{
    Periodic elliptic confocal checkerboard IC-nets for $\alpha=2$, $\beta=1$, $N=32$.
    \emph{Left:} $\kappa = 0$, corresponding to the degenerate case of an IC-net.
    \emph{Right:} $\kappa = 0.1$.
  }
  \label{global1}
\end{figure}

\subsection{Hyperbolic confocal checkerboard IC-nets}

The lines of ``hyperbolic'' confocal checkerboard IC-nets are tangent to a hyperbola given by, without loss of generality,
\bela{E6.24}
  \frac{x^2}{\alpha^2} - \frac{y^2}{\beta^2} = 1.
\ela
In terms of the Blaschke model, the associated pencil of quadrics 
\bela{E6.27}
  (\alpha^2 + \lambda)v^2 - (\beta^2 - \lambda)w^2 = d^2 + \lambda
\ela
is generated by the pair of quadrics
\bela{E6.25}
  \alpha^2v^2 - \beta^2w^2 = d^2,\quad v^2 + w^2 = 1.
\ela
The base curve is the intersection of these two quadrics (and all members of the pencil), corresponding to type Ia$_-$ of the classification of pencils of quadrics. The two components of the base curve are mapped into each other via $v\rightarrow -v$. As in the elliptic case, the base curve may be parametrised in terms of elliptic functions and one has to distinguish between two cases. 

\blank{In fact, we observe that the elliptic and hyperbolic cases are formally linked by letting $\beta \rightarrow i\beta$, so that $ k<1\rightarrow k >1$, and applying the identities
\begin{gather}\label{E6.30a}
  \jac{sn}(z,1/k) = k\jac{sn}(z/k,k),\quad \jac{cn}(z,1/k) =\jac{dn}(z/k,k)\\ \jac{dn}(z,1/k) = \jac{cn}(z/k,k).
\end{gather}
An appropriate complex shift of the argument $\psi$ then establishes the formal equivalence.}

\subsubsection{The case \boldmath $-\alpha^2 \leq \lambda\leq 0$}

In this case, any quadric $\mathcal{H}$ constitutes a one-sheeted hyperboloid (or a cone for $\lambda=0$ and an elliptic cylinder for $\lambda=-\alpha^2$) which is aligned with the $v$-axis.
It is then readily verified that 
\bela{E6.26}
  \bv_{\pm}(\psi) =  \begin{pmatrix} v_\pm(\psi)\\ w(\psi)\\ d(\psi) \end{pmatrix} = \begin{pmatrix} \pm\jac{dn}(\psi,k)\\ k\jac{sn}(\psi,k)\\ \alpha\jac{cn}(\psi,k)\end{pmatrix},\quad
  k = \sqrt{\frac{\alpha^2}{\alpha^2 + \beta^2}}
\ela
covers all points of these two components. Given any two points of the form \eqref{E6.5} on the two components of the base curve, one may now determine the corresponding quadric $\mathcal{H}$ of the pencil which contains the lines \eqref{E6.6} as generators for fixed $s$ and all $\psi_0$. A calculation along the lines of the previous subsection reveals that the pencil parameter $\lambda$ linked to the parameter $s$ is given by
\bela{E6.28}
  \lambda = -\alpha^2\jac{cn}^2\left(\frac{s}{2},k\right).
\ela

\subsubsection{The case \boldmath $\lambda\geq\beta^2$}

This case corresponds to the remaining two-sheeted hyperboloids $\mathcal{H}$ (or a hyperbolic cylinder for $\lambda=\beta^2$) of the pencil \eqref{E6.27} which are aligned with the Blaschke cylinder.
In analogy with the elliptic case, it is convenient to introduce a parameter $\epsilon$ in the parametrisation
\bela{E6.29}
  \bv^\epsilon_{\pm}(\psi) = \begin{pmatrix} \pm\jac{dn}(\psi,k)\\ k\jac{sn}(\psi,k)\\ \epsilon \alpha\jac{cn}(\psi,k)\end{pmatrix},\quad \epsilon^2 = 1
\ela
of the base curve. Then, any pair of points of the type \eqref{E6.13} on any fixed component of the base curve may be connected by a line \eqref{E6.14} which constitutes a generator of the quadric \eqref{E6.27} for
\bela{E6.30}
  \lambda =  (\alpha^2 + \beta^2)\jac{ds}^2\left(\frac{s}{2},k\right)
\ela
independently of the value of $\psi$. 

\subsubsection{Construction of hyperbolic confocal checkerboard IC-nets}

Once again, as an illustration, we now consider two quadrics $\calH$ and $\tilde{\calH}$ of the pencil \eqref{E6.27} defined via the relation \eqref{E6.28} by given parameters $s$ and $\tilde{s}$. The formulae \eqref{EE6.17} for the two families of lines of the corresponding confocal checkerboard IC-nets remain valid in the current hyperbolic case  but $\bv_{\pm}$ and $k$ are now defined by \eqref{E6.26}. In fact, the conditions \eqref{E6.19}-\eqref{E6.23} for embeddedness are likewise applicable. Thus, for $\kappa=0$, one obtains hyperbolic IC-nets as illustrated in Figure \ref{global4} (left) for $N=32$, $\psi_0^v=0.2$ and $\alpha=\beta=1$.
Furthermore, a hyperbolic confocal checkerboard IC-net for $\kappa=0.1$ is depicted in Figure \ref{global4} (right).
\begin{figure}
  \centering
  \raisebox{-0.5\height}{\includegraphics[width=0.49\textwidth]{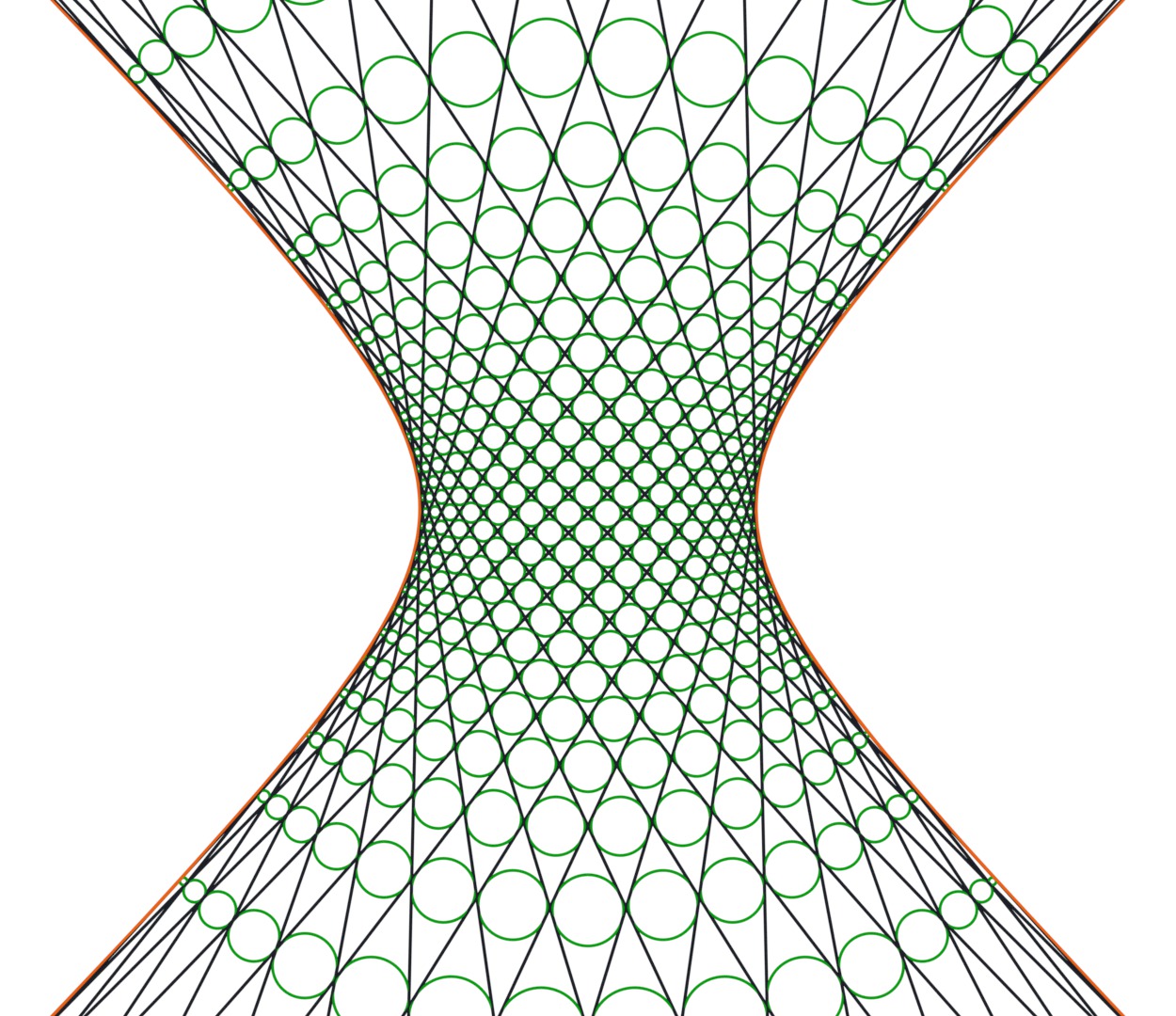}}
  \raisebox{-0.5\height}{\includegraphics[width=0.49\textwidth]{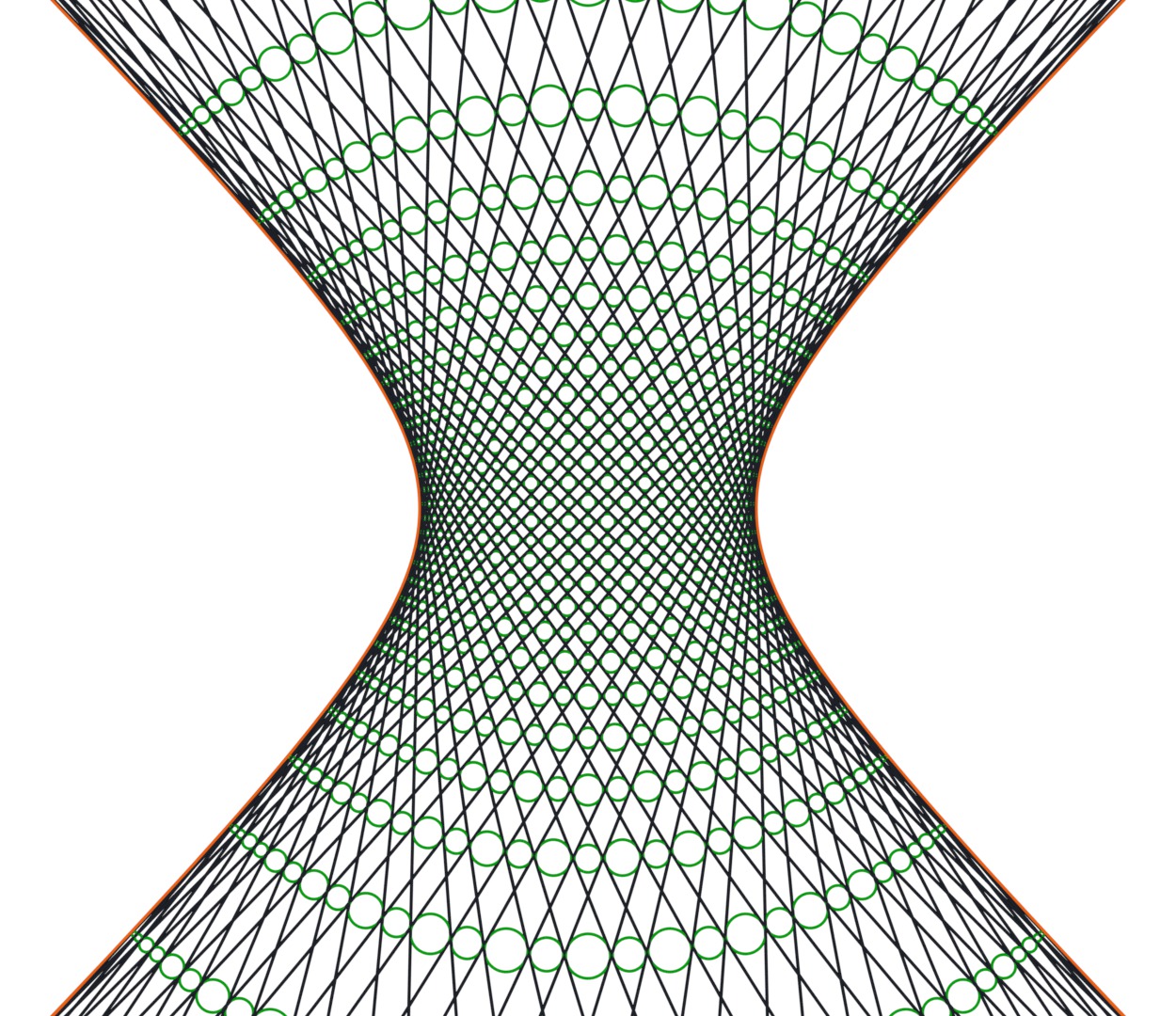}}
  \caption{
    Periodic hyperbolic confocal checkerboard IC-nets for $\alpha=\beta=1$, $N=32$.
    \emph{Left:} $\kappa = 0$, corresponding to the degenerate case of an IC-net.
    \emph{Right:} $\kappa = 0.1$.
  }
\label{global4}
\end{figure}

\subsection{IC-nets and discrete confocal conics}

We conclude this section by relating IC-nets to the discrete confocal conics proposed in \cite{BSST16,BSST17}. For brevity, we focus on the class of IC-nets which is subsumed by the class of elliptic checkerboard IC-nets captured by Theorem~\ref{ellipticcheckerboards}. These IC-nets are associated with the choice $\tilde{s}=2\sfK$, corresponding to a cone as the corresponding quadric $\tilde{\cal{H}}$. Thus, if we set $s=2\sfK + \delta$ and $\psi_0^{v/h} = \delta n_0^{v/h}$ then the explicit parametrisation \eqref{EE6.17} leads to the following corollary.

\begin{corollary}
The (coinciding pairs of non-oriented) lines of an elliptic IC-net of the type captured by Theorem \ref{ellipticcheckerboards} may be represented by the lines
\bela{E6.31}
  \bv_{n_1}^v = \bv(\delta(n_0^v +  n_1)),\quad \bv_{n_2}^h = \bv(\delta(n_0^h - n_2)),\quad n_i\in\Z,
\ela
where 
\bela{E6.32}
  \bv(\psi) =  \begin{pmatrix} v(\psi)\\ w(\psi)\\ d(\psi) \end{pmatrix} = \begin{pmatrix} \jac{cn}(\psi,k)\\ \jac{sn}(\psi,k)\\ \alpha\jac{dn}(\psi,k)\end{pmatrix},\quad
  k = \sqrt{1-\frac{\beta^2}{\alpha^2}}.
\ela
\end{corollary}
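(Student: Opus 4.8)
The plan is to obtain \eqref{E6.31}--\eqref{E6.32} by a direct specialisation of the general parametrisation \eqref{EE6.17}, exploiting the quasi-periodicity of the Jacobi elliptic functions under shifts by the quarter- and half-periods. With the choice $\tilde{s}=2\sfK$, formula \eqref{EE6.11} gives $\tilde{\lambda}=\alpha^2\jac{cs}^2(\sfK,k)=0$, so that $\tilde{\mathcal H}$ degenerates to the cone \eqref{E6.2}$_1$ and the checkerboard net collapses to an IC-net in which every second combinatorial strip degenerates, precisely the situation described after \eqref{E6.23} and in Remark~\ref{rem.IC}. Setting moreover $s=2\sfK+\delta$ yields $s+\tilde{s}=4\sfK+\delta$, and with $\psi_0^{v/h}=\delta n_0^{v/h}$ the arguments occurring in \eqref{EE6.17} take the form $\delta(n_0^v+n)+4n\sfK$ and the analogous expressions for the remaining three families.

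First I would record the standard identities for the Jacobi functions: $\jac{sn}$ and $\jac{cn}$ have real period $4\sfK$ while $\jac{dn}$ has period $2\sfK$, and under a half-period shift one has $\jac{sn}(\psi+2\sfK,k)=-\jac{sn}(\psi,k)$, $\jac{cn}(\psi+2\sfK,k)=-\jac{cn}(\psi,k)$ and $\jac{dn}(\psi+2\sfK,k)=\jac{dn}(\psi,k)$. Combined with the definition $\bv_\pm(\psi)=(\jac{cn}(\psi,k),\jac{sn}(\psi,k),\pm\alpha\jac{dn}(\psi,k))^T$ from \eqref{E6.4}, these produce the two relations $\bv_\pm(\psi+4\sfK)=\bv_\pm(\psi)$ and $\bv_\pm(\psi+2\sfK)=-\bv_\mp(\psi)$, which are the only facts needed in the substitution.

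Next I would substitute. For the even vertical lines the term $4n\sfK$ drops out by $4\sfK$-periodicity, so $\bv_{2n}^v=\bv_+(\delta(n_0^v+n))=\bv(\delta(n_0^v+n))$ with $\bv=\bv_+$ as in \eqref{E6.32}. For the odd vertical lines the argument reduces, modulo $4\sfK$, to $\delta(n_0^v+n+1)+2\sfK$, and the half-period relation gives $\bv_{2n+1}^v=\bv_-(\,\cdot\,+2\sfK)=-\bv_+(\delta(n_0^v+n+1))=-\bv(\delta(n_0^v+n+1))$. Hence $\bv_{2n-1}^v=-\bv_{2n}^v$, which is exactly the degeneration of each second strip into a pair of oppositely oriented coinciding lines. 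The horizontal family is handled identically: using $\bv_-(\psi-2\sfK)=-\bv_+(\psi)$ one finds $\bv_{2n}^h=\bv(\delta(n_0^h-n))$ and $\bv_{2n+1}^h=-\bv(\delta(n_0^h-n-1))$.

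Finally, after collapsing each coinciding oppositely oriented pair to a single non-oriented line and relabelling the surviving lines by a single index $n_1$ (respectively $n_2$), the vertical lines are represented by $\bv(\delta(n_0^v+n_1))$ and the horizontal lines by $\bv(\delta(n_0^h-n_2))$, which is the claimed formula. I expect the only delicate point --- the \emph{main obstacle}, modest as it is --- to be the bookkeeping of orientations: one must invoke the Blaschke-cylinder identification of the opposite three-tuples $(\bv,d)$ and $(-\bv,-d)$ with one and the same line of reversed orientation in order to read each sign flip as an orientation reversal rather than a genuinely new line, and then check that the reindexing of the collapsed pairs is consistent, so that every non-oriented line of the IC-net is accounted for exactly once.
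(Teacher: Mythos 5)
Your proposal is correct and follows essentially the same route as the paper: the corollary is obtained there precisely by substituting $\tilde{s}=2\sfK$, $s=2\sfK+\delta$ and $\psi_0^{v/h}=\delta n_0^{v/h}$ into \eqref{EE6.17}, which is exactly what you do. You merely make explicit the periodicity and half-period identities ($\bv_\pm(\psi+4\sfK)=\bv_\pm(\psi)$, $\bv_\pm(\psi+2\sfK)=-\bv_\mp(\psi)$) and the orientation bookkeeping that the paper leaves implicit.
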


In the sense of Laguerre geometry, the quadruples of oriented lines
\bela{E6.33}
  \bv_{n_1}^v,\quad \bv_{n_2}^h,\quad -\bv_{n_1+1}^v,\quad -\bv_{n_2+1}^h
\ela
are tangent to circles so that, by construction,
\bela{E6.34}
  \begin{vmatrix} 1 & v_{n_1}^v & w_{n_1}^v & d_{n_1}^v\\[1mm]
                         1 & v_{n_2}^h & w_{n_2}^h & d_{n_2}^h\\[1mm]
                         1 & -v_{n_1+1}^v & -w_{n_1+1}^v & -d_{n_1+1}^v\\[1mm]
                         1 & -v_{n_2+1}^h & -w_{n_2+1}^h & -d_{n_2+1}^h\\[1mm]
  \end{vmatrix} = 0
\ela
which coincides with the known identity \cite{NIST}
\bela{E6.35}
  \begin{vmatrix} 1 & \jac{sn}(z_1,k) &\jac{cn}(z_1,k) & \jac{dn}(z_1,k)\\
                         1 & \jac{sn}(z_2,k) &\jac{cn}(z_2,k) & \jac{dn}(z_2,k)\\
                         1 & \jac{sn}(z_3,k) &\jac{cn}(z_3,k) & \jac{dn}(z_3,k)\\
                         1 & \jac{sn}(z_4,k) &\jac{cn}(z_4,k) & \jac{dn}(z_4,k)                     
  \end{vmatrix} = 0,\quad z_1+z_2+z_3+z_4 = 0
\ela
for Jacobi elliptic functions if one identifies the arguments in \eqref{E6.34} and \eqref{E6.35} appropriately. The point of intersection $(x_\times,y_\times)$ of two lines $\bv_{n_1}^v$ and $\bv_{n_2}^h$ is given by the solution of the two linear equations
\bela{E6.36}
  v_{n_1}^v x_\times + w_{n_1}^v y_\times = d_{n_1}^v,\quad  v_{n_2}^h x_\times + w_{n_2}^h y_\times = d_{n_2}^h.
\ela
If we now make the change of variables
\bela{E6.37}
 \begin{split}
  n_1 = m_2 + m_1,\quad \xi_1 & = \delta\left[m_1 + \frac{1}{2}(n_0^v + n_0^h)\right]\\
  n_2 = m_2 - m_1,\quad \xi_2 & = \delta\left[m_2 + \frac{1}{2}(n_0^v - n_0^h)\right],
 \end{split}
\ela
leading to
\bela{E6.38}
  \bv^v = \bv(\xi_1 + \xi_2),\quad \bv^h = \bv(\xi_1-\xi_2),
\ela
where we have suppressed the dependence on $m_i\in\frac{1}{2}\Z$, then consideration of the sum and the difference of the linear equations \eqref{E6.36} and application of the addition theorems for Jacobi elliptic functions produces the following result.

\begin{theorem}
The points of intersection of the pairs of lines $(\bv_{m_2+m_1}^v$, $\bv_{m_2-m_1}^h)$ of the elliptic IC-nets \eqref{E6.31}, \eqref{E6.32} are given by the compact formulae
\bela{E6.39}
  x_\times = \alpha\jac{cd}(\xi_1,k)\jac{dc}(\xi_2,k),\quad y_\times = \alpha (1-k^2)\jac{sd}(\xi_1,k)\jac{nc}(\xi_2,k)
\ela
with
\bela{EE6.37}
 \xi_1 = \delta\left[m_1 + \frac{1}{2}(n_0^v + n_0^h)\right],\quad
 \xi_2 = \delta\left[m_2 + \frac{1}{2}(n_0^v - n_0^h)\right].
\ela
These lie on the conics
\bela{E6.40}
  \frac{x_\times^2}{\lambda^e(\xi_2)} + \frac{y_\times^2}{\mu^e(\xi_2)} = 1, \quad \frac{x_\times^2}{\lambda^h(\xi_1)} + \frac{y_\times^2}{\mu^h(\xi_1)} = 1,
\ela
where
\bela{E6.41}
 \begin{split}
   \lambda^e = \alpha^2\jac{dc}^2(\xi_2,k), \phantom{k^2}\quad \mu^e &= \alpha^2(1 - k^2)\jac{nc}^2(\xi_2,k)\\
    \lambda^h = \alpha^2k^2\jac{cd}^2(\xi_1,k),\quad \mu^h &= -\alpha^2k^2(1 - k^2)\jac{sd}^2(\xi_1,k),
 \end{split}
\ela
which are confocal to the ellipse of contact \eqref{E6.1}.
\end{theorem}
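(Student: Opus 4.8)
The plan is to obtain the compact formulae by solving the linear system \eqref{E6.36} directly and then to verify the two conic incidences by substitution. Writing $\bv^v=\bv(\xi_1+\xi_2)$ and $\bv^h=\bv(\xi_1-\xi_2)$ as in \eqref{E6.38}, with scalar components $v^v=\jac{cn}(\xi_1+\xi_2,k)$, $w^v=\jac{sn}(\xi_1+\xi_2,k)$, $d^v=\alpha\jac{dn}(\xi_1+\xi_2,k)$ and likewise for $\bv^h$, Cramer's rule applied to \eqref{E6.36} gives
\begin{equation}
x_\times = \frac{d^v w^h - d^h w^v}{v^v w^h - v^h w^v},\qquad y_\times = \frac{v^v d^h - v^h d^v}{v^v w^h - v^h w^v}.
\end{equation}
The first step is to expand every entry using the Jacobi addition theorems \cite{NIST}, abbreviating $S_i=\jac{sn}(\xi_i,k)$, $C_i=\jac{cn}(\xi_i,k)$, $D_i=\jac{dn}(\xi_i,k)$ for $i=1,2$. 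Each of $\jac{sn}(\xi_1\pm\xi_2)$, $\jac{cn}(\xi_1\pm\xi_2)$, $\jac{dn}(\xi_1\pm\xi_2)$ then becomes a fraction over the common denominator $\Delta=1-k^2 S_1^2 S_2^2$, and because of the $\xi_1\pm\xi_2$ sum/difference structure the ``symmetric'' contributions in each $2\times2$ determinant cancel pairwise, leaving a single cross term.

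The key mechanism is that each surviving cross term factors as a monomial in $S_i,C_i,D_i$ times a bracket which collapses to $\Delta$. Concretely, I would first reduce the denominator $v^vw^h-v^hw^v$ to $-2C_2S_2D_1\,(C_1^2+S_1^2D_2^2)/\Delta^2$, where the Pythagorean identities $C_1^2=1-S_1^2$ and $D_2^2=1-k^2S_2^2$ force $C_1^2+S_1^2D_2^2=\Delta$, so the denominator equals $-2C_2S_2D_1/\Delta$. The same bookkeeping applied to the $x$-numerator yields $-2\alpha C_1S_2D_2/\Delta$ (the bracket $D_1^2+k^2S_1^2C_2^2$ again equals $\Delta$), and applied to the $y$-numerator yields $-2\alpha(1-k^2)S_1S_2/\Delta$ (here $k^2C_1^2C_2^2-D_1^2D_2^2=-(1-k^2)\Delta$). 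Dividing, the common factors $\Delta$, $S_2$ and the sign cancel, producing
\begin{equation}
\begin{gathered}
x_\times = \frac{\alpha C_1 D_2}{D_1 C_2} = \alpha\,\jac{cd}(\xi_1,k)\jac{dc}(\xi_2,k),\\
y_\times = \frac{\alpha(1-k^2)S_1}{D_1 C_2} = \alpha(1-k^2)\,\jac{sd}(\xi_1,k)\jac{nc}(\xi_2,k),
\end{gathered}
\end{equation}
which are the asserted formulae \eqref{E6.39}.

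For the conic incidences I would substitute these into the two quadratic forms \eqref{E6.40}. In the $\xi_2$-family the factors $\jac{dc}^2(\xi_2)$ and $\jac{nc}^2(\xi_2)$ cancel against $\lambda^e,\mu^e$, reducing the left-hand side to $\jac{cd}^2(\xi_1)+(1-k^2)\jac{sd}^2(\xi_1)=(C_1^2+(1-k^2)S_1^2)/D_1^2=1$ via $C_1^2+(1-k^2)S_1^2=D_1^2$. In the $\xi_1$-family the analogous cancellation leaves $k^{-2}\bigl[\jac{dc}^2(\xi_2)-(1-k^2)\jac{nc}^2(\xi_2)\bigr]=k^{-2}(D_2^2-(1-k^2))/C_2^2=1$, using $D_2^2-(1-k^2)=k^2C_2^2$. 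Finally, confocality with the ellipse \eqref{E6.1} follows from the difference of the semi-axis parameters: $\lambda^e-\mu^e=\alpha^2k^2$ and $\lambda^h-\mu^h=\alpha^2k^2$, both equal to $\alpha^2-\beta^2$ since $k^2=1-\beta^2/\alpha^2$, and a constant difference of denominators is exactly the confocality condition. The calculation is mechanical once the addition theorems are invoked; the only step needing care is the bookkeeping that exhibits the repeated factor $\Delta$ (or $-(1-k^2)\Delta$) in each determinant, since it is this common factor whose cancellation yields the strikingly simple closed forms. I expect recognising that collapse through the quadratic Jacobi identities to be the main, if modest, obstacle, with everything else routine division and substitution.
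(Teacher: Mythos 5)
Your proposal is correct and follows essentially the same route as the paper: both solve the $2\times2$ linear system \eqref{E6.36} in the variables $\xi_1\pm\xi_2$ via the Jacobi addition theorems (the paper takes the sum and difference of the equations where you use Cramer's rule, a purely organisational difference), then verify the conic relations \eqref{E6.40} by direct substitution and deduce confocality from $\lambda^e-\mu^e=\lambda^h-\mu^h=\alpha^2k^2=\alpha^2-\beta^2$. Your identification of the collapsing factor $\Delta=1-k^2\jac{sn}^2(\xi_1,k)\jac{sn}^2(\xi_2,k)$ in each determinant is exactly the mechanism that makes the paper's ``straightforward to verify'' claim work, and all of your intermediate identities check out.
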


\begin{proof}
It is straightforward to verify that $\xi_1$ and $\xi_2$ as given by \eqref{EE6.37} obey the quadratic relations \eqref{E6.40}. Moreover, since
\bela{E6.42}
  \lambda^e - \mu^e = \lambda^h - \mu^h = \alpha^2k^2 = \alpha^2 - \beta^2,
\ela
the conics \eqref{E6.40} are indeed in the set of confocal conics defined by the ellipse \eqref{E6.1}.
\end{proof}

As proven in \cite{AB}, the centres of the circles of IC-nets lie on affine transforms of confocal conics. Hence, the algebraic structure of their coordinates should coincide with that of the points of intersection of pairs of lines as given by \eqref{E6.39}. In order to confirm this assertion, it is observed that the centre $(x_\odot,y_\odot)$ and the radius $r_\odot$ of any particular circle are determined by solving any three equations of the linear system
\bela{E6.43}
  v x_\odot + w y_\odot - r_\odot = d,\quad \bv \in \{\bv_{n_1}^v,\bv_{n_2}^h,-\bv_{n_1+1}^v,-\bv_{n_2+1}^h\}.
\ela
Elimination of $r_\odot$ leads to the pair of equations
\bela{E6.44}
 \begin{split}
  (v_{n_1}^v - v_{n_2}^h) x_\odot + (w_{n_1}^v - w_{n_2}^h) y_\odot & = (d_{n_1}^v - d_{n_2}^h)\\
  (v_{n_1+1}^v - v_{n_2+1}^h) x_\odot + (w_{n_1+1}^v - w_{n_2+1}^h) y_\odot & = (d_{n_1+1}^v - d_{n_2+1}^h).
 \end{split}
\ela 
Once again, the addition theorems for Jacobi elliptic functions and the double- and half-``angle'' formulae \cite{NIST}
\bela{E6.46}
 \begin{split}
  \jac{sn}(2z,k) & = \frac{2\jac{sn}(z,k)\jac{cn}(z,k)\jac{dn}(z,k)}{1-k^2\jac{sn}^4(z,k)}\\
  \jac{sn}^2(\tfrac{1}{2}z,k) & = \frac{1-\jac{cn}(z,k)}{1+\jac{dn}(z,k)}
 \end{split}
\ela
give rise to a compact form of its solution.

\begin{theorem}
The centres of the circles of the elliptic IC-nets \eqref{E6.31}, \eqref{E6.32} are given by
\bela{E6.45}
 \begin{split}
  x_\odot & = \alpha\jac{dc}(\tfrac{\delta}{2},k)\jac{cd}(\xi_1,k)\jac{dc}(\xi_2+\tfrac{\delta}{2},k)\\
  y_\odot & = \alpha (1-k^2)\jac{nc}(\tfrac{\delta}{2},k)\jac{sd}(\xi_1,k)\jac{nc}(\xi_2+\tfrac{\delta}{2},k).
 \end{split}
\ela
These constitute the vertices of a discrete confocal coordinate system on the plane, that is,
there exist functions $f(m_1),g(m_1)$ and $\tilde{f}(m_2),\tilde{g}(m_2)$ such that
\bela{E6.47}
  \begin{pmatrix} x_\odot\\ y_\odot\end{pmatrix} = \frac{1}{\sqrt{a-b}}\begin{pmatrix}f(m_1)\tilde{f}(m_2)\\ g(m_1)\tilde{g}(m_2)\end{pmatrix}
\ela
and
\bela{E6.51}
 \begin{split}
  f(m_1)f(m_1+\tfrac{1}{2}) + g(m_1)g(m_1+\tfrac{1}{2}) & = a-b\\  
  \tilde{f}(m_2)\tilde{f}(m_2+\tfrac{1}{2}) - \tilde{g}(m_2)\tilde{g}(m_2+\tfrac{1}{2}) & = a-b,
 \end{split}
\ela
where $a - b = \alpha^2k^2$.
\end{theorem}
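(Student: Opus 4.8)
The plan is to establish \eqref{E6.45} by explicitly solving the linear system \eqref{E6.43} for the circle centres, and then to read off the discrete confocal structure \eqref{E6.47}--\eqref{E6.51} directly from the resulting product formulae. The computation runs parallel to the one leading to the intersection points \eqref{E6.39}; in fact \eqref{E6.45} is essentially that intersection-point formula with $\xi_2$ shifted by $\tfrac{\delta}{2}$ and rescaled by $\jac{dc}(\tfrac{\delta}{2},k)$ respectively $\jac{nc}(\tfrac{\delta}{2},k)$, which serves as a useful consistency check.

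First I would eliminate $r_\odot$ from \eqref{E6.43} to obtain \eqref{E6.44} and pass to the variables $\xi_1,\xi_2$ of \eqref{E6.37}--\eqref{E6.38}. Writing $\bv(\psi)=\bigl(\jac{cn}(\psi,k),\jac{sn}(\psi,k),\alpha\jac{dn}(\psi,k)\bigr)$ and noting that $n_1\mapsto n_1+1$ and $n_2\mapsto n_2+1$ shift the argument by $+\delta$ and $-\delta$, the four lines of the quadruple are $\bv(\xi_1+\xi_2)$, $\bv(\xi_1-\xi_2)$, $-\bv(\xi_1+\xi_2+\delta)$ and $-\bv(\xi_1-\xi_2-\delta)$. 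Applying the difference forms of the addition theorems, $\jac{cn}(u+v)-\jac{cn}(u-v)=-2\,\jac{sn}u\,\jac{dn}u\,\jac{sn}v\,\jac{dn}v/D$, $\jac{sn}(u+v)-\jac{sn}(u-v)=2\,\jac{cn}u\,\jac{dn}u\,\jac{sn}v/D$ and $\jac{dn}(u+v)-\jac{dn}(u-v)=-2k^2\jac{sn}u\,\jac{cn}u\,\jac{sn}v\,\jac{cn}v/D$ with $D=1-k^2\jac{sn}^2u\,\jac{sn}^2v$, the first equation of \eqref{E6.44} (with $u=\xi_1$, $v=\xi_2$) reduces, after cancelling $D$ and the common factors $\jac{sn}\xi_2$ and $\jac{dn}\xi_1$, to a single linear relation between $x_\odot$ and $y_\odot$; the second equation is the very same relation with $\xi_2$ replaced by $\xi_2+\delta$.

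Next I would solve this $2\times2$ system. Setting $\xi_2=M-H$ and $\xi_2+\delta=M+H$ with $M=\xi_2+\tfrac{\delta}{2}$ and $H=\tfrac{\delta}{2}$, elimination of $y_\odot$ turns the ratio $\bigl(\jac{cn}\xi_2-\jac{cn}(\xi_2+\delta)\bigr)/\bigl(\jac{dn}\xi_2-\jac{dn}(\xi_2+\delta)\bigr)$ into $\jac{dn}M\,\jac{dn}H/(k^2\jac{cn}M\,\jac{cn}H)$, which gives $x_\odot=\alpha\jac{dc}(\tfrac{\delta}{2},k)\,\jac{cd}(\xi_1,k)\,\jac{dc}(\xi_2+\tfrac{\delta}{2},k)$. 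Elimination of $x_\odot$ instead leaves the combination $\jac{cn}\xi_2\,\jac{dn}(\xi_2+\delta)-\jac{cn}(\xi_2+\delta)\,\jac{dn}\xi_2$; here the decisive simplification is the identity $\jac{dn}^2M\,\jac{dn}^2H-k^2\jac{cn}^2M\,\jac{cn}^2H=(1-k^2)\bigl(1-k^2\jac{sn}^2M\,\jac{sn}^2H\bigr)$, which cancels the denominator $D$ and yields $y_\odot=\alpha(1-k^2)\jac{nc}(\tfrac{\delta}{2},k)\,\jac{sd}(\xi_1,k)\,\jac{nc}(\xi_2+\tfrac{\delta}{2},k)$. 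This establishes \eqref{E6.45}.

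Finally, the discrete confocal structure follows by inspection. Because $\xi_1$ depends only on $m_1$ and $\xi_2+\tfrac{\delta}{2}$ only on $m_2$, the formulae \eqref{E6.45} factorise as in \eqref{E6.47} with $\sqrt{a-b}=\alpha k$, on taking $f$ and $g$ proportional to $\jac{cd}(\xi_1,k)$ and $\jac{sd}(\xi_1,k)$ and $\tilde f,\tilde g$ proportional to $\jac{dc}(\xi_2+\tfrac{\delta}{2},k)$ and $\jac{nc}(\xi_2+\tfrac{\delta}{2},k)$. Since $m_i\mapsto m_i+\tfrac12$ shifts the relevant argument by $h=\tfrac{\delta}{2}$, the relations \eqref{E6.51} amount to the two product identities $\jac{cd}(u)\jac{cd}(u+h)+\tfrac{1-k^2}{\jac{dn}h}\jac{sd}(u)\jac{sd}(u+h)=\jac{cd}(h)$ and $\jac{dc}(u)\jac{dc}(u+h)-\tfrac{1-k^2}{\jac{dn}h}\jac{nc}(u)\jac{nc}(u+h)=k^2\jac{cd}(h)$ (modulus $k$ suppressed), each of which one proves by clearing the common denominator and invoking the addition theorems exactly as above; fixing the multiplicative constants in $f,g,\tilde f,\tilde g$ so that both right-hand sides become $a-b=\alpha^2k^2$ then gives \eqref{E6.51}. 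The main obstacle is the elliptic-function bookkeeping of Part~1 --- above all recognising the collapsing identity for $y_\odot$ --- and, in Part~2, pinning down the constants so that the factorisation \eqref{E6.47} and both quadratic relations \eqref{E6.51} hold simultaneously.
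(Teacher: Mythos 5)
Your proposal is correct and follows essentially the same route as the paper: solve the linear system \eqref{E6.44} via the addition theorems and the half-``angle'' formulae to obtain \eqref{E6.45}, read off the factorisation \eqref{E6.47}, and verify \eqref{E6.51} for a suitable scaling of $f,g,\tilde{f},\tilde{g}$ (the paper's explicit choice \eqref{E6.48}); your two product identities are precisely the ones that make both relations equal to $a-b=\alpha^2k^2$ simultaneously. The only point you omit is the sign assumption $\jac{cn}(\tfrac{\delta}{2},k)>0$ needed for the square roots in that scaling to be real --- the paper notes that the opposite (``superdiscrete'') case requires additional factors $(-1)^{m_1}$ and $(-1)^{m_2}$ in $f,g$ and $\tilde{f},\tilde{g}$.
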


\begin{proof}
The structure of the solution \eqref{E6.45} of the linear system \eqref{E6.44} shows that it factorises according \eqref{E6.47}. If we choose the scaling of the functions $f,g$ and $\tilde{f},\tilde{g}$ in such a manner that
\bela{E6.48}
 \begin{split} 
  f & = |\alpha k| \sqrt{\jac{dc}(\tfrac{\delta}{2},k)}\jac{cd}(\xi_1,k)\\ \tilde{f}&=\alpha\sqrt{\jac{dc}(\tfrac{\delta}{2},k)}\jac{dc}(\xi_2+\tfrac{\delta}{2})\\
 g &= |\alpha k|\sqrt{1-k^2}\sqrt{\jac{nc}(\tfrac{\delta}{2},k)}\jac{sd}(\xi_1,k)\\ \tilde{g}&=\alpha\sqrt{1-k^2}\sqrt{\jac{nc}(\tfrac{\delta}{2},k)}\jac{nc}(\xi_2+\tfrac{\delta}{2})
  \end{split}
\ela
then it may be verified that the difference equations \eqref{E6.51} are indeed satisfied. Here, we have assumed that $\jac{cn}(\tfrac{\delta}{2},k)>0$, which is compatible with the continuum limit $\delta\rightarrow0$. The other case may be dealt with in a similar manner but requires the introduction of factors of the type $(-1)^{m_1}$ and $(-1)^{m_2}$ in the definitions of $f,g$ and $\tilde{f},\tilde{g}$ respectively. This corresponds to ``superdiscrete'' IC-nets which do not admit a continuum limit. Finally, since the pair \eqref{E6.47}, \eqref{E6.51} characterises discrete confocal coordinate systems on the plane \cite{BSST17}, the proof is complete.
\end{proof}

\begin{remark}
Up to the shift of the argument $\xi_2\rightarrow\xi_2 + \frac{\delta}{2}$ and the affine transformation
\bela{E6.54}
(x_\odot,y_\odot) \rightarrow (Ax_\odot,By_\odot),\qquad A = \jac{cd}(\tfrac{\delta}{2},k),\quad  B = \jac{cn}(\tfrac{\delta}{2},k).
\ela
the formulae \eqref{E6.39} and \eqref{E6.45} coincide. This confirms that the centres $(x_\odot,y_\odot)$ lie on affine transforms of the confocal conics associated with the ellipse \eqref{E6.1}. Equivalently, this implies that the affine transforms $(A^{-1}x_\times,B^{-1}y_\times)$ are likewise vertices of a discrete confocal coordinate system. In fact, the points $(x_\odot,y_\odot)$ and $(A^{-1}x_\times,B^{-1}y_\times)$ are part of the same (extended) discrete confocal coordinate system of ${(\frac{1}{2}\mathbb{Z})}^2$ combinatorics. Another implication of this connection is that the functions $f,g$ and $\tilde{f},\tilde{g}$ as given by \eqref{E6.48} satisfy the algebraic identities
\bela{E6.49}
 Af^2 + Bg^2 = a-b,\quad A\tilde{f}^2 - B\tilde{g}^2 = a-b.
\ela
The latter constitute the algebraic constraints on discrete confocal coordinate systems as defined by \eqref{E6.47}, \eqref{E6.51} which give rise to the privileged IC-net-related discrete confocal coordinate systems touched upon in the preceding. These have been discussed in detail in \cite{BSST17}.
\end{remark}

\begin{remark}
If we eliminate, for instance, $g$ between \eqref{E6.51}$_1$ and \eqref{E6.49}$_1$ then we obtain a first-order difference equation for $f$, namely
\bela{QRT1}
  (A^2-B^2)f_{\frac{1}{2}}^2f^2 + 2(a-b)B^2f_{\frac{1}{2}}f - (a-b)A(f_{\frac{1}{2}}^2+f^2) + (a-b)^2(1-B^2) = 0,
\ela
where $f = f(m_1),\,f_{\frac{1}{2}} = f(m_1+\frac{1}{2})$. The latter may be regarded as a first integral of a difference equation of second order. Indeed, if we regard $B^2$ as the associated constant of integration then elimination of $B$ leads to 
\bela{QRT2}
  f_1 = \frac{F^1(f_{\frac{1}{2}}) - f F^2(f_{\frac{1}{2}})}{F^2(f_{\frac{1}{2}}) - f F^3(f_{\frac{1}{2}})}
\ela
with
\bela{QRT3}
  F^1(f_{\frac{1}{2}}) = 2(a-b)f_{\frac{1}{2}},\quad F^2(f_{\frac{1}{2}}) = f_{\frac{1}{2}}^2 + (a-b)A,\quad F^3(f_{\frac{1}{2}}) = 2Af_{\frac{1}{2}}
\ela
and $f_1 = f(m_1+1)$. Remarkably, \eqref{QRT2}, \eqref{QRT3} constitutes a particular symmetric case of an 18-parameter family of integrable reversible mappings of the plane known as QRT maps \cite{QRT89}. These play a fundamental role in the theory of discrete integrable systems and are known to be parametrisable in terms of elliptic functions, which is in agreement with the parametrisation of IC-nets presented in the preceding.
\end{remark}


\section{Generalised checkerboard IC-nets}

The construction of checkerboard IC-nets in terms of the Blaschke cylinder model as described in Section \ref{s.laguerre} may immediately be generalised in a natural manner. Thus, for any given pencil of quadrics which contains the Blaschke cylinder $\cal Z$, we first select two (``horizontal'' and ``vertical'') sequences of hyperboloids $\mathcal{H}^h_n$ and $\mathcal{H}_n^v$ belonging to this pencil. We then choose two points $\ell_1$ and $m_1$ of the associated hypercycle base curve and iteratively construct two sequences of points $\ell_n$ and $m_n$ on the hypercycle base curve by ``moving along'' generators $L_n$ and $M_n$ of the corresponding hyperboloids $\mathcal{H}^h_n$ and $\mathcal{H}_n^v$ respectively, that is,
\bela{E7.1}
  L_n = (\ell_n,\ell_{n+1})\subset \mathcal{H}^h_n,\quad  M_n = (m_n,m_{n+1})\subset \mathcal{H}^v_n.
\ela
If, for any $i,k$, the two hyperboloids $\mathcal{H}^h_i$ and $\mathcal{H}_k^v$ coincide and the corresponding generators $L_i$ and $M_k$ have been chosen in such a manner that they are not in the same family of generators of the common hyperboloid then the lines $\ell_i,\ell_{i+1}$ and $m_k,m_{k+1}$ circumscribe an oriented circle. In particular, if $\mathcal{H}_n :=\mathcal{H}^h_n=\mathcal{H}_n^v$ and $\mathcal{H}_{n+2}=\mathcal{H}_n$ then standard checkerboard IC-nets are retrieved. An example of a generalised checkerboard IC-net in the case of period 4, that is, $\mathcal{H}_{n+4}=\mathcal{H}_n$ is displayed in Figure \ref{f.IC3} (left). Another example of period 4 which involves only three hyperboloids with $\mathcal{H}_{4n+1} = \mathcal{H}_1$, $\mathcal{H}_{2n+2} = \mathcal{H}_2$, $\mathcal{H}_{4n+3} = \mathcal{H}_3$ is also depicted in Figure \ref{f.IC3} (right).

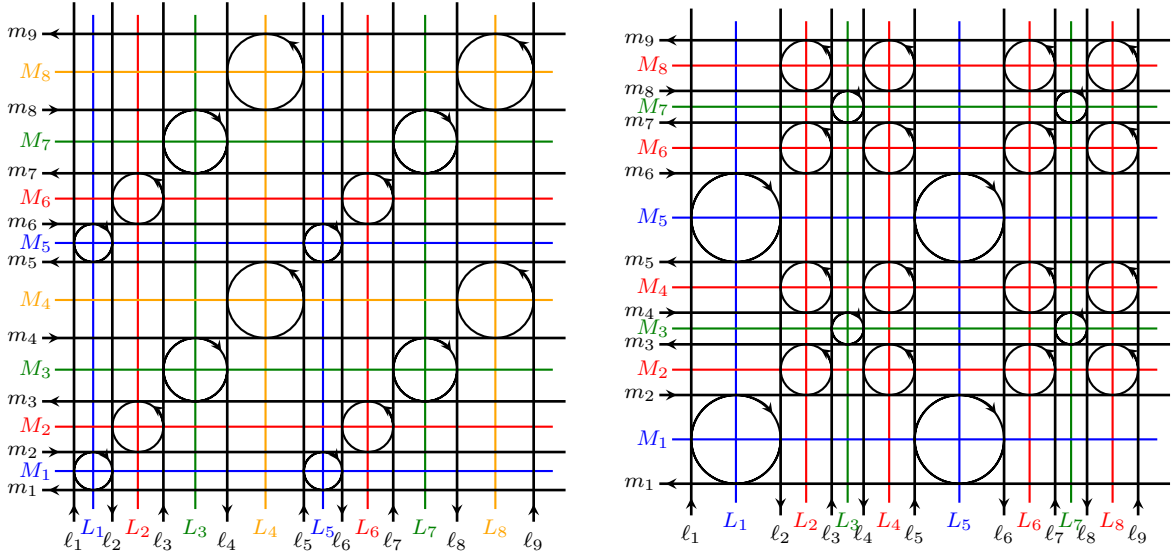
\begin{figure}
  \begin{center}
    \raisebox{-0.5\height}{
      \begin{tikzpicture}[line cap=line join=round,>=stealth,x=1.0cm,y=1.0cm, scale=0.84]
        \def\ra{0.6}%
        \def\rb{0.8}%
        \def\rc{1.0}%
        \def\rd{1.2}%
        \begin{scriptsize}
          \draw (0, -0.8) node {$\ell_1$};
          \draw (\ra, -0.8) node {$\ell_2$};
          \draw (\ra+\rb, -0.8) node {$\ell_3$};
          \draw (\ra+\rb+\rc, -0.8) node {$\ell_4$};
          \draw (\ra+\rb+\rc+\rd, -0.8) node {$\ell_5$};
          \draw (2*\ra+\rb+\rc+\rd, -0.8) node {$\ell_6$};
          \draw (2*\ra+2*\rb+\rc+\rd, -0.8) node {$\ell_7$};
          \draw (2*\ra+2*\rb+2*\rc+\rd, -0.8) node {$\ell_8$};
          \draw (2*\ra+2*\rb+2*\rc+2*\rd, -0.8) node {$\ell_9$};
          \draw (-0.8, 0) node {$m_1$};
          \draw (-0.8, \ra) node {$m_2$};
          \draw (-0.8, \ra+\rb) node {$m_3$};
          \draw (-0.8, \ra+\rb+\rc) node {$m_4$};
          \draw (-0.8, \ra+\rb+\rc+\rd) node {$m_5$};
          \draw (-0.8, 2*\ra+\rb+\rc+\rd) node {$m_6$};
          \draw (-0.8, 2*\ra+2*\rb+\rc+\rd) node {$m_7$};
          \draw (-0.8, 2*\ra+2*\rb+2*\rc+\rd) node {$m_8$};
          \draw (-0.8, 2*\ra+2*\rb+2*\rc+2*\rd) node {$m_9$};
          \draw [color=Blue]  (\ra/2, -0.6) node {$L_1$};
          \draw [color=Red]   (\ra+\rb/2, -0.6) node {$L_2$};
          \draw [color=Green] (\ra+\rb+\rc/2, -0.6) node {$L_3$};
          \draw [color=Orange]  (\ra+\rb+\rc+\rd/2, -0.6) node {$L_4$};
          \draw [color=Blue]   (\ra+\rb+\rc+\rd+\ra/2, -0.6) node {$L_5$};
          \draw [color=Red] (2*\ra+\rb+\rc+\rd+\rb/2, -0.6) node {$L_6$};
          \draw [color=Green] (2*\ra+2*\rb+\rc+\rd+\rc/2, -0.6) node {$L_7$};
          \draw [color=Orange] (2*\ra+2*\rb+2*\rc+\rd+\rd/2, -0.6) node {$L_8$};
          \draw [color=Blue]  (-0.6, \ra/2) node {$M_1$};
          \draw [color=Red]   (-0.6, \ra+\rb/2) node {$M_2$};
          \draw [color=Green] (-0.6, \ra+\rb+\rc/2) node {$M_3$};
          \draw [color=Orange]  (-0.6, \ra+\rb+\rc+\rd/2) node {$M_4$};
          \draw [color=Blue]   (-0.6, \ra+\rb+\rc+\rd+\ra/2) node {$M_5$};
          \draw [color=Red] (-0.6, 2*\ra+\rb+\rc+\rd+\rb/2) node {$M_6$};
          \draw [color=Green] (-0.6, 2*\ra+2*\rb+\rc+\rd+\rc/2) node {$M_7$};
          \draw [color=Orange] (-0.6, 2*\ra+2*\rb+2*\rc+\rd+\rd/2) node {$M_8$};
        \end{scriptsize}
        \clip(-0.5,-0.5) rectangle ({2*(\ra+\rb+\rc+\rd)+1.0}, {2*(\ra+\rb+\rc+\rd)+1.0});
        \draw [line width=0.8pt, color=Blue, domain=-0.3:2*(\ra+\rb+\rc+\rd)+0.3] plot(\ra/2,\x);
        \draw [line width=0.8pt, color=Red,  domain=-0.3:2*(\ra+\rb+\rc+\rd)+0.3] plot(\ra+\rb/2,\x);
        \draw [line width=0.8pt, color=Green, domain=-0.3:2*(\ra+\rb+\rc+\rd)+0.3] plot(\ra+\rb+\rc/2,\x);
        \draw [line width=0.8pt, color=Orange,  domain=-0.3:2*(\ra+\rb+\rc+\rd)+0.3] plot(\ra+\rb+\rc+\rd/2,\x);
        \draw [line width=0.8pt, color=Blue, domain=-0.3:2*(\ra+\rb+\rc+\rd)+0.3] plot(\ra+\rb+\rc+\rd+\ra/2,\x);
        \draw [line width=0.8pt, color=Red, domain=-0.3:2*(\ra+\rb+\rc+\rd)+0.3] plot(2*\ra+\rb+\rc+\rd+\rb/2,\x);
        \draw [line width=0.8pt, color=Green, domain=-0.3:2*(\ra+\rb+\rc+\rd)+0.3] plot(2*\ra+2*\rb+\rc+\rd+\rc/2,\x);
        \draw [line width=0.8pt, color=Orange, domain=-0.3:2*(\ra+\rb+\rc+\rd)+0.3] plot(2*\ra+2*\rb+2*\rc+\rd+\rd/2,\x);
        \draw [line width=0.8pt, color=Blue, domain=-0.3:2*(\ra+\rb+\rc+\rd)+0.3] plot(\x, \ra/2);
        \draw [line width=0.8pt, color=Red,  domain=-0.3:2*(\ra+\rb+\rc+\rd)+0.3] plot(\x, \ra+\rb/2);
        \draw [line width=0.8pt, color=Green, domain=-0.3:2*(\ra+\rb+\rc+\rd)+0.3] plot(\x, \ra+\rb+\rc/2);
        \draw [line width=0.8pt, color=Orange,  domain=-0.3:2*(\ra+\rb+\rc+\rd)+0.3] plot(\x, \ra+\rb+\rc+\rd/2);
        \draw [line width=0.8pt, color=Blue, domain=-0.3:2*(\ra+\rb+\rc+\rd)+0.3] plot(\x, \ra+\rb+\rc+\rd+\ra/2);
        \draw [line width=0.8pt, color=Red, domain=-0.3:2*(\ra+\rb+\rc+\rd)+0.3] plot(\x, 2*\ra+\rb+\rc+\rd+\rb/2);
        \draw [line width=0.8pt, color=Green, domain=-0.3:2*(\ra+\rb+\rc+\rd)+0.3] plot(\x, 2*\ra+2*\rb+\rc+\rd+\rc/2);
        \draw [line width=0.8pt, color=Orange, domain=-0.3:2*(\ra+\rb+\rc+\rd)+0.3] plot(\x, 2*\ra+2*\rb+2*\rc+\rd+\rd/2);

        \draw [line width=1pt, domain=-0.5:2*(\ra+\rb+\rc+\rd)+0.5] plot(0,\x);
        \draw [>-, thick] (0,-0.4) -- (0,-0.3);
        \draw [line width=1pt, domain=-0.5:2*(\ra+\rb+\rc+\rd)+0.5] plot(\ra,\x);
        \draw [<-, thick] (\ra,-0.4) -- (\ra,-0.3);
        \draw [line width=1pt, domain=-0.5:2*(\ra+\rb+\rc+\rd)+0.5] plot(\ra+\rb,\x);
        \draw [>-, thick] (\ra+\rb,-0.4) -- (\ra+\rb,-0.3);
        \draw [line width=1pt, domain=-0.5:2*(\ra+\rb+\rc+\rd)+0.5] plot(\ra+\rb+\rc,\x);
        \draw [<-, thick] (\ra+\rb+\rc,-0.4) -- (\ra+\rb+\rc,-0.3);
        \draw [line width=1pt, domain=-0.5:2*(\ra+\rb+\rc+\rd)+0.5] plot(\ra+\rb+\rc+\rd,\x);
        \draw [>-, thick] (\ra+\rb+\rc+\rd,-0.4) -- (\ra+\rb+\rc+\rd,-0.3);
        \draw [line width=1pt, domain=-0.5:2*(\ra+\rb+\rc+\rd)+0.5] plot(2*\ra+\rb+\rc+\rd,\x);
        \draw [<-, thick] (2*\ra+\rb+\rc+\rd,-0.4) -- (2*\ra+\rb+\rc+\rd,-0.3);
        \draw [line width=1pt, domain=-0.5:2*(\ra+\rb+\rc+\rd)+0.5] plot(2*\ra+2*\rb+\rc+\rd,\x);
        \draw [>-, thick] (2*\ra+2*\rb+\rc+\rd,-0.4) -- (2*\ra+2*\rb+\rc+\rd,-0.3);
        \draw [line width=1pt, domain=-0.5:2*(\ra+\rb+\rc+\rd)+0.5] plot(2*\ra+2*\rb+2*\rc+\rd,\x);
        \draw [<-, thick] (2*\ra+2*\rb+2*\rc+\rd,-0.4) -- (2*\ra+2*\rb+2*\rc+\rd,-0.3);
        \draw [line width=1pt, domain=-0.5:2*(\ra+\rb+\rc+\rd)+0.5] plot(2*\ra+2*\rb+2*\rc+2*\rd,\x);
        \draw [>-, thick] (2*\ra+2*\rb+2*\rc+2*\rd,-0.4) -- (2*\ra+2*\rb+2*\rc+2*\rd,-0.3);
        \draw [line width=1pt, domain=-0.5:2*(\ra+\rb+\rc+\rd)+0.5] plot(\x, 0);
        \draw [<-, thick] (-0.4,0) -- (-0.3,0);
        \draw [line width=1pt, domain=-0.5:2*(\ra+\rb+\rc+\rd)+0.5] plot(\x,\ra);
        \draw [>-, thick] (-0.4,\ra) -- (-0.3,\ra);
        \draw [line width=1pt, domain=-0.5:2*(\ra+\rb+\rc+\rd)+0.5] plot(\x, \ra+\rb);
        \draw [<-, thick] (-0.4,\ra+\rb) -- (-0.3,\ra+\rb);
        \draw [line width=1pt, domain=-0.5:2*(\ra+\rb+\rc+\rd)+0.5] plot(\x,\ra+\rb+\rc);
        \draw [>-, thick] (-0.4,\ra+\rb+\rc) -- (-0.3,\ra+\rb+\rc);
        \draw [line width=1pt, domain=-0.5:2*(\ra+\rb+\rc+\rd)+0.5] plot(\x, \ra+\rb+\rc+\rd);
        \draw [<-, thick] (-0.4,\ra+\rb+\rc+\rd) -- (-0.3,\ra+\rb+\rc+\rd);
        \draw [line width=1pt, domain=-0.5:2*(\ra+\rb+\rc+\rd)+0.5] plot(\x, 2*\ra+\rb+\rc+\rd);
        \draw [>-, thick] (-0.4,2*\ra+\rb+\rc+\rd) -- (-0.3,2*\ra+\rb+\rc+\rd);
        \draw [line width=1pt, domain=-0.5:2*(\ra+\rb+\rc+\rd)+0.5] plot(\x,2*\ra+2*\rb+\rc+\rd);
        \draw [<-, thick] (-0.4,2*\ra+2*\rb+\rc+\rd) -- (-0.3,2*\ra+2*\rb+\rc+\rd);
        \draw [line width=1pt, domain=-0.5:2*(\ra+\rb+\rc+\rd)+0.5] plot(\x,2*\ra+2*\rb+2*\rc+\rd);
        \draw [>-, thick] (-0.4,2*\ra+2*\rb+2*\rc+\rd) -- (-0.3,2*\ra+2*\rb+2*\rc+\rd);
        \draw [line width=1pt, domain=-0.5:2*(\ra+\rb+\rc+\rd)+0.5] plot(\x,2*\ra+2*\rb+2*\rc+2*\rd);
        \draw [<-, thick] (-0.4,2*\ra+2*\rb+2*\rc+2*\rd) -- (-0.3,2*\ra+2*\rb+2*\rc+2*\rd);
        \draw [->, thick] (\ra,\ra/2) arc (0:-685:\ra/2);
        \draw [->, thick] (2*\ra+\rb+\rc+\rd,\ra/2) arc (0:-685:\ra/2);
        \draw [->, thick] (\ra+\rb,\ra+\rb/2) arc (0:415:\rb/2);
        \draw [->, thick] (2*\ra+2*\rb+\rc+\rd,\ra+\rb/2) arc (0:415:\rb/2);
        \draw [->, thick] (\ra+\rb+\rc,\ra+\rb+\rc/2) arc (0:-685:\rc/2);
        \draw [->, thick] (2*\ra+2*\rb+2*\rc+\rd,\ra+\rb+\rc/2) arc (0:-685:\rc/2);
        \draw [->, thick] (\ra+\rb+\rc+\rd,\ra+\rb+\rc+\rd/2) arc (0:415:\rd/2);
        \draw [->, thick] (2*\ra+2*\rb+2*\rc+2*\rd,\ra+\rb+\rc+\rd/2) arc (0:415:\rd/2);
        \draw [->, thick] (\ra,\ra+\rb+\rc+\rd+\ra/2) arc (0:-685:\ra/2);
        \draw [->, thick] (2*\ra+\rb+\rc+\rd,\ra+\rb+\rc+\rd+\ra/2) arc (0:-685:\ra/2);
        \draw [->, thick] (\ra+\rb,2*\ra+\rb+\rc+\rd+\rb/2) arc (0:415:\rb/2);
        \draw [->, thick] (2*\ra+2*\rb+\rc+\rd,2*\ra+\rb+\rc+\rd+\rb/2) arc (0:415:\rb/2);
        \draw [->, thick] (\ra+\rb+\rc,2*\ra+2*\rb+\rc+\rd+\rc/2) arc (0:-685:\rc/2);
        \draw [->, thick] (2*\ra+2*\rb+2*\rc+\rd,2*\ra+2*\rb+\rc+\rd+\rc/2) arc (0:-685:\rc/2);
        \draw [->, thick] (\ra+\rb+\rc+\rd,2*\ra+2*\rb+2*\rc+\rd+\rd/2) arc (0:415:\rd/2);
        \draw [->, thick] (2*\ra+2*\rb+2*\rc+2*\rd,2*\ra+2*\rb+2*\rc+\rd+\rd/2) arc (0:415:\rd/2);
      \end{tikzpicture}}
     \raisebox{-0.5\height}{
      \begin{tikzpicture}[line cap=line join=round,>=stealth,x=1.0cm,y=1.0cm, scale=0.84]
        \def\ra{1.4}%
        \def\rb{0.8}%
        \def\rc{0.5}%
        \begin{scriptsize}
          \draw (0, -0.8) node {$\ell_1$};
          \draw (\ra, -0.8) node {$\ell_2$};
          \draw (\ra+\rb, -0.8) node {$\ell_3$};
          \draw (\ra+\rb+\rc, -0.8) node {$\ell_4$};
          \draw (\ra+\rb+\rc+\rb, -0.8) node {$\ell_5$};
          \draw (2*\ra+\rb+\rc+\rb, -0.8) node {$\ell_6$};
          \draw (2*\ra+2*\rb+\rc+\rb, -0.8) node {$\ell_7$};
          \draw (2*\ra+2*\rb+2*\rc+\rb, -0.8) node {$\ell_8$};
          \draw (2*\ra+2*\rb+2*\rc+2*\rb, -0.8) node {$\ell_9$};
          \draw (-0.8, 0) node {$m_1$};
          \draw (-0.8, \ra) node {$m_2$};
          \draw (-0.8, \ra+\rb) node {$m_3$};
          \draw (-0.8, \ra+\rb+\rc) node {$m_4$};
          \draw (-0.8, \ra+\rb+\rc+\rb) node {$m_5$};
          \draw (-0.8, 2*\ra+\rb+\rc+\rb) node {$m_6$};
          \draw (-0.8, 2*\ra+2*\rb+\rc+\rb) node {$m_7$};
          \draw (-0.8, 2*\ra+2*\rb+2*\rc+\rb) node {$m_8$};
          \draw (-0.8, 2*\ra+2*\rb+2*\rc+2*\rb) node {$m_9$};
          \draw [color=Blue]  (\ra/2, -0.6) node {$L_1$};
          \draw [color=Red]   (\ra+\rb/2, -0.6) node {$L_2$};
          \draw [color=Green] (\ra+\rb+\rc/2, -0.6) node {$L_3$};
          \draw [color=Red]  (\ra+\rb+\rc+\rb/2, -0.6) node {$L_4$};
          \draw [color=Blue]   (\ra+\rb+\rc+\rb+\ra/2, -0.6) node {$L_5$};
          \draw [color=Red] (2*\ra+\rb+\rc+\rb+\rb/2, -0.6) node {$L_6$};
          \draw [color=Green] (2*\ra+2*\rb+\rc+\rb+\rc/2, -0.6) node {$L_7$};
          \draw [color=Red] (2*\ra+2*\rb+2*\rc+\rb+\rb/2, -0.6) node {$L_8$};
          \draw [color=Blue]  (-0.6, \ra/2) node {$M_1$};
          \draw [color=Red]   (-0.6, \ra+\rb/2) node {$M_2$};
          \draw [color=Green] (-0.6, \ra+\rb+\rc/2) node {$M_3$};
          \draw [color=Red]  (-0.6, \ra+\rb+\rc+\rb/2) node {$M_4$};
          \draw [color=Blue]   (-0.6, \ra+\rb+\rc+\rb+\ra/2) node {$M_5$};
          \draw [color=Red] (-0.6, 2*\ra+\rb+\rc+\rb+\rb/2) node {$M_6$};
          \draw [color=Green] (-0.6, 2*\ra+2*\rb+\rc+\rb+\rc/2) node {$M_7$};
          \draw [color=Red] (-0.6, 2*\ra+2*\rb+2*\rc+\rb+\rb/2) node {$M_8$};
        \end{scriptsize}
        \clip(-0.5,-0.5) rectangle ({2*(\ra+\rb+\rc+\rb)+1.0}, {2*(\ra+\rb+\rc+\rb)+1.0});
        \draw [line width=0.8pt, color=Blue, domain=-0.3:2*(\ra+\rb+\rc+\rb)+0.3] plot(\ra/2,\x);
        \draw [line width=0.8pt, color=Red,  domain=-0.3:2*(\ra+\rb+\rc+\rb)+0.3] plot(\ra+\rb/2,\x);
        \draw [line width=0.8pt, color=Green, domain=-0.3:2*(\ra+\rb+\rc+\rb)+0.3] plot(\ra+\rb+\rc/2,\x);
        \draw [line width=0.8pt, color=Red,  domain=-0.3:2*(\ra+\rb+\rc+\rb)+0.3] plot(\ra+\rb+\rc+\rb/2,\x);
        \draw [line width=0.8pt, color=Blue, domain=-0.3:2*(\ra+\rb+\rc+\rb)+0.3] plot(\ra+\rb+\rc+\rb+\ra/2,\x);
        \draw [line width=0.8pt, color=Red, domain=-0.3:2*(\ra+\rb+\rc+\rb)+0.3] plot(2*\ra+\rb+\rc+\rb+\rb/2,\x);
        \draw [line width=0.8pt, color=Green, domain=-0.3:2*(\ra+\rb+\rc+\rb)+0.3] plot(2*\ra+2*\rb+\rc+\rb+\rc/2,\x);
        \draw [line width=0.8pt, color=Red, domain=-0.3:2*(\ra+\rb+\rc+\rb)+0.3] plot(2*\ra+2*\rb+2*\rc+\rb+\rb/2,\x);
        \draw [line width=0.8pt, color=Blue, domain=-0.3:2*(\ra+\rb+\rc+\rb)+0.3] plot(\x, \ra/2);
        \draw [line width=0.8pt, color=Red,  domain=-0.3:2*(\ra+\rb+\rc+\rb)+0.3] plot(\x, \ra+\rb/2);
        \draw [line width=0.8pt, color=Green, domain=-0.3:2*(\ra+\rb+\rc+\rb)+0.3] plot(\x, \ra+\rb+\rc/2);
        \draw [line width=0.8pt, color=Red,  domain=-0.3:2*(\ra+\rb+\rc+\rb)+0.3] plot(\x, \ra+\rb+\rc+\rb/2);
        \draw [line width=0.8pt, color=Blue, domain=-0.3:2*(\ra+\rb+\rc+\rb)+0.3] plot(\x, \ra+\rb+\rc+\rb+\ra/2);
        \draw [line width=0.8pt, color=Red, domain=-0.3:2*(\ra+\rb+\rc+\rb)+0.3] plot(\x, 2*\ra+\rb+\rc+\rb+\rb/2);
        \draw [line width=0.8pt, color=Green, domain=-0.3:2*(\ra+\rb+\rc+\rb)+0.3] plot(\x, 2*\ra+2*\rb+\rc+\rb+\rc/2);
        \draw [line width=0.8pt, color=Red, domain=-0.3:2*(\ra+\rb+\rc+\rb)+0.3] plot(\x, 2*\ra+2*\rb+2*\rc+\rb+\rb/2);

        \draw [line width=1pt, domain=-0.5:2*(\ra+\rb+\rc+\rb)+0.5] plot(0,\x);
        \draw [>-, thick] (0,-0.4) -- (0,-0.3);
        \draw [line width=1pt, domain=-0.5:2*(\ra+\rb+\rc+\rb)+0.5] plot(\ra,\x);
        \draw [<-, thick] (\ra,-0.4) -- (\ra,-0.3);
        \draw [line width=1pt, domain=-0.5:2*(\ra+\rb+\rc+\rb)+0.5] plot(\ra+\rb,\x);
        \draw [>-, thick] (\ra+\rb,-0.4) -- (\ra+\rb,-0.3);
        \draw [line width=1pt, domain=-0.5:2*(\ra+\rb+\rc+\rb)+0.5] plot(\ra+\rb+\rc,\x);
        \draw [<-, thick] (\ra+\rb+\rc,-0.4) -- (\ra+\rb+\rc,-0.3);
        \draw [line width=1pt, domain=-0.5:2*(\ra+\rb+\rc+\rb)+0.5] plot(\ra+\rb+\rc+\rb,\x);
        \draw [>-, thick] (\ra+\rb+\rc+\rb,-0.4) -- (\ra+\rb+\rc+\rb,-0.3);
        \draw [line width=1pt, domain=-0.5:2*(\ra+\rb+\rc+\rb)+0.5] plot(2*\ra+\rb+\rc+\rb,\x);
        \draw [<-, thick] (2*\ra+\rb+\rc+\rb,-0.4) -- (2*\ra+\rb+\rc+\rb,-0.3);
        \draw [line width=1pt, domain=-0.5:2*(\ra+\rb+\rc+\rb)+0.5] plot(2*\ra+2*\rb+\rc+\rb,\x);
        \draw [>-, thick] (2*\ra+2*\rb+\rc+\rb,-0.4) -- (2*\ra+2*\rb+\rc+\rb,-0.3);
        \draw [line width=1pt, domain=-0.5:2*(\ra+\rb+\rc+\rb)+0.5] plot(2*\ra+2*\rb+2*\rc+\rb,\x);
        \draw [<-, thick] (2*\ra+2*\rb+2*\rc+\rb,-0.4) -- (2*\ra+2*\rb+2*\rc+\rb,-0.3);
        \draw [line width=1pt, domain=-0.5:2*(\ra+\rb+\rc+\rb)+0.5] plot(2*\ra+2*\rb+2*\rc+2*\rb,\x);
        \draw [>-, thick] (2*\ra+2*\rb+2*\rc+2*\rb,-0.4) -- (2*\ra+2*\rb+2*\rc+2*\rb,-0.3);
        \draw [line width=1pt, domain=-0.5:2*(\ra+\rb+\rc+\rb)+0.5] plot(\x, 0);
        \draw [<-, thick] (-0.4,0) -- (-0.3,0);
        \draw [line width=1pt, domain=-0.5:2*(\ra+\rb+\rc+\rb)+0.5] plot(\x,\ra);
        \draw [>-, thick] (-0.4,\ra) -- (-0.3,\ra);
        \draw [line width=1pt, domain=-0.5:2*(\ra+\rb+\rc+\rb)+0.5] plot(\x, \ra+\rb);
        \draw [<-, thick] (-0.4,\ra+\rb) -- (-0.3,\ra+\rb);
        \draw [line width=1pt, domain=-0.5:2*(\ra+\rb+\rc+\rb)+0.5] plot(\x,\ra+\rb+\rc);
        \draw [>-, thick] (-0.4,\ra+\rb+\rc) -- (-0.3,\ra+\rb+\rc);
        \draw [line width=1pt, domain=-0.5:2*(\ra+\rb+\rc+\rb)+0.5] plot(\x, \ra+\rb+\rc+\rb);
        \draw [<-, thick] (-0.4,\ra+\rb+\rc+\rb) -- (-0.3,\ra+\rb+\rc+\rb);
        \draw [line width=1pt, domain=-0.5:2*(\ra+\rb+\rc+\rb)+0.5] plot(\x, 2*\ra+\rb+\rc+\rb);
        \draw [>-, thick] (-0.4,2*\ra+\rb+\rc+\rb) -- (-0.3,2*\ra+\rb+\rc+\rb);
        \draw [line width=1pt, domain=-0.5:2*(\ra+\rb+\rc+\rb)+0.5] plot(\x,2*\ra+2*\rb+\rc+\rb);
        \draw [<-, thick] (-0.4,2*\ra+2*\rb+\rc+\rb) -- (-0.3,2*\ra+2*\rb+\rc+\rb);
        \draw [line width=1pt, domain=-0.5:2*(\ra+\rb+\rc+\rb)+0.5] plot(\x,2*\ra+2*\rb+2*\rc+\rb);
        \draw [>-, thick] (-0.4,2*\ra+2*\rb+2*\rc+\rb) -- (-0.3,2*\ra+2*\rb+2*\rc+\rb);
        \draw [line width=1pt, domain=-0.5:2*(\ra+\rb+\rc+\rb)+0.5] plot(\x,2*\ra+2*\rb+2*\rc+2*\rb);
        \draw [<-, thick] (-0.4,2*\ra+2*\rb+2*\rc+2*\rb) -- (-0.3,2*\ra+2*\rb+2*\rc+2*\rb);
        \draw [->, thick] (\ra,\ra/2) arc (0:-685:\ra/2);
        \draw [->, thick] (2*\ra+\rb+\rc+\rb,\ra/2) arc (0:-685:\ra/2);
        \draw [->, thick] (\ra+\rb,\ra+\rb/2) arc (0:415:\rb/2);
        \draw [->, thick] (\ra+\rb+\rc+\rb,\ra+\rb/2) arc (0:415:\rb/2);
        \draw [->, thick] (2*\ra+2*\rb+\rc+\rb,\ra+\rb/2) arc (0:415:\rb/2);
        \draw [->, thick] (2*\ra+2*\rb+2*\rc+2*\rb,\ra+\rb/2) arc (0:415:\rb/2);
        \draw [->, thick] (\ra+\rb+\rc,\ra+\rb+\rc/2) arc (0:-685:\rc/2);
        \draw [->, thick] (2*\ra+2*\rb+2*\rc+\rb,\ra+\rb+\rc/2) arc (0:-685:\rc/2);
        \draw [->, thick] (\ra+\rb+\rc+\rb,\ra+\rb+\rc+\rb/2) arc (0:415:\rb/2);
        \draw [->, thick] (\ra+\rb,\ra+\rb+\rc+\rb/2) arc (0:415:\rb/2);
        \draw [->, thick] (2*\ra+2*\rb+2*\rc+2*\rb,\ra+\rb+\rc+\rb/2) arc (0:415:\rb/2);
        \draw [->, thick] (2*\ra+2*\rb+\rc+\rb,\ra+\rb+\rc+\rb/2) arc (0:415:\rb/2);
        \draw [->, thick] (\ra,\ra+\rb+\rc+\rb+\ra/2) arc (0:-685:\ra/2);
        \draw [->, thick] (2*\ra+\rb+\rc+\rb,\ra+\rb+\rc+\rb+\ra/2) arc (0:-685:\ra/2);
        \draw [->, thick] (\ra+\rb,2*\ra+\rb+\rc+\rb+\rb/2) arc (0:415:\rb/2);
        \draw [->, thick] (\ra+\rb+\rc+\rb,2*\ra+\rb+\rc+\rb+\rb/2) arc (0:415:\rb/2);
        \draw [->, thick] (2*\ra+2*\rb+\rc+\rb,2*\ra+\rb+\rc+\rb+\rb/2) arc (0:415:\rb/2);
        \draw [->, thick] (2*\ra+2*\rb+2*\rc+2*\rb,2*\ra+\rb+\rc+\rb+\rb/2) arc (0:415:\rb/2);
        \draw [->, thick] (\ra+\rb+\rc,2*\ra+2*\rb+\rc+\rb+\rc/2) arc (0:-685:\rc/2);
        \draw [->, thick] (2*\ra+2*\rb+2*\rc+\rb,2*\ra+2*\rb+\rc+\rb+\rc/2) arc (0:-685:\rc/2);
        \draw [->, thick] (\ra+\rb+\rc+\rb,2*\ra+2*\rb+2*\rc+\rb+\rb/2) arc (0:415:\rb/2);
        \draw [->, thick] (\ra+\rb,2*\ra+2*\rb+2*\rc+\rb+\rb/2) arc (0:415:\rb/2);
        \draw [->, thick] (2*\ra+2*\rb+2*\rc+2*\rb,2*\ra+2*\rb+2*\rc+\rb+\rb/2) arc (0:415:\rb/2);
        \draw [->, thick] (2*\ra+2*\rb+\rc+\rb,2*\ra+2*\rb+2*\rc+\rb+\rb/2) arc (0:415:\rb/2);
      \end{tikzpicture}}
  \end{center}
  \caption{Examples of generalised checkerboard IC nets involving three (right) and four (left)  different hyperboloids.}
  \label{f.IC3} 
\end{figure}

In algebraic terms, the construction of generalised checkerboard IC-nets may be implemented as follows. Here, we focus on a pencil of quadrics which has already been normalised so that
\bela{E7.2}
  (a + \lambda)v^2 + (b + \lambda)w^2 = d^2 + \lambda
\ela
with underlying normalised cone and Blaschke cylinder
\bela{E7.8}
  av^2 + bw^2 = d^2,\quad v^2 + w^2 = 1.
\ela
Any prescribed sequence of (suitably constrained) pencil parameters $\lambda_n$ then corresponds to a sequence of hyperboloids $\mathcal{H}_n$ which, in the following, represents one of  $\mathcal{H}^h_n$ or $\mathcal{H}_n^v$. The procedure described below may then also be applied to the other sequence of hyperboloids. Now, given a point $\bv_n=(v_n,w_n,d_n)$ (that is, either $\ell_n$ or $m_n$) on the hypercycle base curve, the two choices for the point $\bv_{n+1}$ corresponding to the pair of generators of the hyperboloid $\mathcal{H}_n$ passing through $\bv_n$ are obtained by intersecting the hypercycle base curve with the tangent plane of $\mathcal{H}_n$ at $\bv_n$. Algebraically, this is expressed by
\bela{E7.9}
  (a + \lambda)v_nv_{n+1} + (b + \lambda)w_nw_{n+1} = d_nd_{n+1} + \lambda.
\ela
If we eliminate $v_{n+1}$ and $w_{n+1}$ between this tangency condition and the hypercycle base curve constraints 
\bela{E7.10}
  av_{n+1}^2 + bw_{n+1}^2 = d_{n+1}^2,\quad v_{n+1}^2 + w_{n+1}^2 = 1
\ela
then we obtain a quartic in $d_{n+1}$ which, by construction, contains the factor $(d_{n+1}-d_n)^2$. Accordingly, we are left with a symmetric and biquadratic relation between $d_{n+1}$ and $d_n$ which reads
\bela{E7.11}
  \kappa_d(d_n^2d_{n+1}^2 + ab) + d_n^2 + d_{n+1}^2 + 2\kappa_v\kappa_wd_nd_{n+1} = 0,
\ela
where
\bela{E7.12}
  \kappa_v = \frac{\lambda^2 + 2a\lambda + ab}{\lambda^2 - ab},\quad
  \kappa_w = \frac{\lambda^2 + 2b\lambda + ab}{\lambda^2 - ab},\quad
  \kappa_d = 4\frac{\lambda(\lambda + a)(\lambda + b)}{(\lambda^2 - ab)^2}.
\ela
For a given point $\bv_n$, the two choices for the point $\bv_{n+1}$ therefore correspond to the two roots of the quadratic \eqref{E7.11}. In order to verify algebraically the uniqueness of the components $v_{n+1}$ and $w_{n+1}$ once $d_{n+1}$ has been fixed, it is convenient to be aware of the pair of linear equations (in $v_{n+1}$ and $w_{n+1}$) 
\bela{E7.13}
  \kappa_w a v_nv_{n+1} + \kappa_v b w_nw_{n+1} + d_nd_{n+1} = 0,\quad \kappa_v v_nv_{n+1} + \kappa_w w_nw_{n+1} = 1 
\ela
which may be extracted from the compatible system \eqref{E7.9}, \eqref{E7.10}. One may directly verify that the pair \eqref{E7.13} may be combined to reproduce the tangency condition \eqref{E7.9}. 

We observe in passing that for any given sequence $\lambda_n$, the biquadratic equation \eqref{E7.11} may be regarded as a non-autonomous extension of the first integral of a particular member of the symmetric class of QRT maps alluded to at the end of the previous section. In the case of standard confocal checkerboard IC-nets, the coefficients of the biquadratic  are of period 2. Non-autonomous QRT maps with periodic coefficients and their relation to discrete Painlev\'e equations have been discussed in detail in \cite{RGW11}. 

As an illustration of the above formalism, we consider the ``elliptic'' case analogous to that discussed in Section \ref{s.ellipticg0}, that is,
\bela{E7.14}
  a = \alpha^2,\quad b = \beta^2,\quad \lambda \geq 0.
\ela
Then, any given sequence $s_n$ determines both the hyperboloids $\mathcal{H}_n$ according to
\bela{E7.15}
  \lambda_n = \alpha^2\jac{cs^2}\left(\frac{s_n}{2},k\right),\quad k = \sqrt{1 - \frac{\beta^2}{\alpha^2}}
\ela
and, via $\operatorname{sgn}(s_n)$, to which families the generators ($L_n$ or $M_n$) belong. The solution of the system \eqref{E7.11}, \eqref{E7.13} is given by
\bela{E7.16}
  \bv_n = \begin{pmatrix} \jac{cn}(\psi_n,k)\\ \jac{sn}(\psi_n,k)\\ (-1)^n\alpha\jac{dn}(\psi_n,k)\end{pmatrix},\quad
  \psi_{n+1} = \psi_n + s_n
\ela
and the coefficients $\kappa_v$, $\kappa_w$ and $\kappa_d$ read
\bela{E7.17}
  \kappa_v = \jac{nc}(s_n,k),\quad \kappa_w = \jac{dc}(s_n,k),\quad \kappa_d = \alpha^{-2}\jac{sc}^2(s_n,k)
\ela
so that the relations \eqref{E7.13} are seen to encode the classical ``pencil'' of addition theorems \eqref{E6.9} for Jacobi elliptic functions. 

We conclude by translating the construction of ``periodic'' generalised checkerboard IC-nets in terms of the Blaschke model into a direct geometric construction in the plane with suitably prescribed Cauchy data. Here, we consider the case $\mathcal{H}^h_n=\mathcal{H}_n^v=\mathcal{H}_n$. We first observe that if we prescribe the line $m_1$ and the lines $\ell_n$ which are in oriented contact with a given hypercycle then a corresponding generalised checkerboard IC-net for which each quadruplet of lines $\ell_n,\ell_{n+1}$, $m_n,m_{n+1}$ is in oriented contact with a circle is uniquely determined. Indeed, there exists a unique line $m_2$ which is in oriented contact with the hypercycle and the unique circle in oriented contact with the lines $\ell_1,\ell_2$ and $m_1$. In this manner, all lines $m_n$ may be constructed iteratively. In the periodic case $\mathcal{H}_{n+N} = \mathcal{H}_n$, $N\geq2$ for which the circles inscribed in the quadruples of lines $\ell_n,\ell_{n+1}$, $m_{n + kN},m_{n+kN+1}$ are required to exist (cf.\ Figure \ref{f.IC3} (left) for $N=4$), it is sufficient to prescribe the lines $m_1$ and $\ell_1,\ldots,\ell_{N+1}$. In order to make good this assertion, we first construct the lines $m_2,\ldots,m_{N+1}$ in the manner described above. The triples of lines $m_1,m_2,\ell_{N+1}$ and $\ell_1,\ell_2,m_{N+1}$ then give rise to associated circles in oriented contact which, in turn, determine the lines $\ell_{N+2}$ and $m_{N+2}$ via oriented contact with the respective circle and the hypercycle. By virtue of Lemma \ref{l.quadric_in_pencil_through_line}, the existence of these two circles and the circle circumscribed by the lines $\ell_1,\ell_2, m_1, m_2$ now implies that the lines $L_1=(\ell_1,\ell_2),M_1=(m_1,m_2)$ and $L_{N+1}=(\ell_{N+1},\ell_{N+2}),M_{N+1}=(m_{N+1},m_{N+2})$ are generators of the same hyperboloid which one may denote by $\mathcal{H}_1=\mathcal{H}_{N+1}$ and, moreover, that $L_{N+1}$ and $M_{N+1}$ are not in the same family of generators of $\mathcal{H}_1$. This guarantees the existence of a circle which is in oriented contact with the quadruple of lines $\ell_{N+1},\ell_{N+2},m_{N+1},m_{N+2}$. Iterative application of this procedure now generates the entire generalised checkerboard IC-net of period $N$. Finally, we merely mention that generalised checkerboard IC-nets of the type displayed in Figure \ref{f.IC3} (right) are determined by lines $m_1$ and $\ell_1,\ell_2,\ell_3,\ell_4$ which are in oriented contact with a hypercycle.

\paragraph{Acknowledgement}

This research was supported by the DFG Collaborative Research Center TRR 109 ``Discretization in Geometry and Dynamics''. W.K.S. was also supported by the Australian Research Council (DP1401000851).

\begin{appendix}
\section{Laguerre geometry}
\label{s.Laguerre}
Here, we present the basic facts about Laguerre geometry, focussing on the Blaschke cylinder model employed in this paper for studying checkerboard IC-nets. We begin with the more fundamental Lie sphere geometry. Lie sphere geometry in the plane is the geometry of oriented circles and lines. These are described as elements of the Lie quadric 
$$
{\cal L}=P(\L^{3,2}), \quad \L^{3,2}=\{x\in \R^{3,2}| <x,x>_{\R^{3,2}}=0\}.
$$
 Let $e_1,e_2,e_3,e_4,e_5$ be an orthonormal basis with signature $(+++--)$. For our purposes, another basis $e_1, e_2, e_5, e_\infty, e_0$ defined by
 $$
 e_0=\frac{1}{2}(e_4-e_3),\quad e_\infty=\frac{1}{2}(e_4+e_3),\quad <e_0,e_\infty>=-\frac{1}{2} 
 $$
turns out to be more convenient. Elements of $\cal L$ with non-vanishing $e_0$-component are identified with oriented circles $|\bx-\bc|^2=r^2$, centred at $\bc\in \R^2$ and of radius $r\in \R$:
\begin{equation}
\label{eq.oriented_circle}
s=\bc+re_5+(|\bc|^2-r^2)e_\infty+e_0.
\end{equation}
Points are circles of radius $r=0$, and oriented lines $(\bv,\bx)_{\R^2}=d$ are elements of $\cal L$ with vanishing $e_0$-component:
\begin{equation}
\label{eq.oriented_line}
p=\bv+e_5+2de_\infty.
\end{equation}
The incidence $<p,s>=0$ is the condition 
\begin{equation}
\label{eq.oriented_contact}
(\bc,\bv)-r=d
\end{equation} 
of oriented contact of a circle and a line.

The Lie sphere transformation group $PO(3,2)$ acting on $P(\R^{3,2})$ preserves the Lie quadric $\cal L$ and maps oriented circles and lines to oriented circles and lines, preserving oriented contact. Its subgroup of Laguerre transformations preserves the set of straight lines or, equivalently, the hyperplane
$$
{\mathsf P}={\rm span}\{e_1,e_2,e_5,e_\infty \}=\{w\in \R^{3,2}|<w,e_\infty>=0\}.
$$
Direct computation shows that the elements of $PO(3,2)$ preserving the hyperplane $\mathsf P$ are of the form 
\bela{e.Laguerre_trafo_5x5}
  \left(\begin{matrix} \lambda B & 0 & \alpha \\[1mm]
                         b^T & 1 & \nu\\[1mm]
                         0 & 0 & \lambda^{-2}
  \end{matrix}\right)
\ela
in the basis $e_1,e_2,e_5,e_\infty,e_0$,
where 
$$
B\in O(2,1),\quad b\in \R^{2,1},\quad \alpha=\frac{\lambda}{2}Bb,\quad \nu=\frac{\lambda}{4}(b,b)_{\R^{2,1}},\quad \lambda\in \R.
$$

In order to pass to the {\em Blaschke cylinder model} of Laguerre geometry, we confine ourselves to the subspace ${\cal P}={\rm span}\{e_1,e_2,e_5,e_\infty \}$. Elements of this space can be identified with straight lines, described (projectively) by 
$$
\tilde{p}=\tilde{\bv}+\tilde{s}e_5+2\tilde{d}e_\infty
$$
as points of the Blaschke cylinder 
\begin{equation}
\label{eq.Blaschke_cylinder_projective}
{\cal Z}=\{ [\tilde{p}]\in P(\R^{2,1,1})| |\tilde{\bv}|^2=\tilde{s}^2\}.
\end{equation}
Identification with \eqref{eq.oriented_line} is made via the normalisation of the $e_0$-component: $\bv=\tilde{\bv}/\tilde{s}$, $d=\tilde{d}/\tilde{s}$. It is noted that the symmetry with the description of circles \eqref{eq.oriented_circle} in Lie sphere geometry is no longer present in the Blaschke cylinder model, and oriented circles are described as the sets of all straight lines in oriented contact, i.e., the sets of lines satisfying the condition \eqref{eq.oriented_contact}, that is
$$
S= \{(\bv,d)\in\R^3| (\bc,\bv)_{\R^2}-r=d \}.
$$
Furthermore, Laguerre transformations restricted to the subspace of lines ${\mathsf P}={\rm span}\{e_1,e_2,e_5,e_\infty \}$ are of the form
\bela{eq.Laguerre_trafo_4x4}
 A= \left(\begin{matrix} \lambda B & 0  \\[1mm]
                         b^T & 1 
  \end{matrix}\right).
\ela
\end{appendix}

\begin{theorem}
The group of Laguerre transformations in the Blaschke (projective) cylinder model (in the basis $e_1,e_2,e_5,e_\infty$) is represented by matrices of the form \eqref{eq.Laguerre_trafo_4x4}, where 
$B\in O(2,1),\, b\in \R^{2,1},\, \lambda\in \R$. These transformations preserve the Blaschke cylinder \eqref{eq.Blaschke_cylinder_projective}.
\end{theorem}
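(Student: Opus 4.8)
The statement bundles two claims: that the \emph{restriction} of a Laguerre transformation to the line-subspace $\mathsf{P}=\mathcal{P}=\operatorname{span}\{e_1,e_2,e_5,e_\infty\}$ has the block form \eqref{eq.Laguerre_trafo_4x4}, and that any such matrix preserves the Blaschke cylinder \eqref{eq.Blaschke_cylinder_projective}. The plan is to deduce both from the already-established $5\times5$ normal form \eqref{e.Laguerre_trafo_5x5} together with one short bilinear-form computation, so there is essentially no heavy lifting once the right identifications are made.

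For the matrix form, I would start from the defining property that every Laguerre transformation $T$ preserves the hyperplane $\mathsf P=\{w:\langle w,e_\infty\rangle_{\R^{3,2}}=0\}$, so that the restriction $T|_{\mathsf P}$ is a well-defined linear map of $\mathsf P$ (projectively, $T$ fixes the subspace up to scale). Reading \eqref{e.Laguerre_trafo_5x5} in the ordered basis $(e_1,e_2,e_5,e_\infty,e_0)$, the first four columns are the images of the basis vectors of $\mathcal P$; their $e_0$-components vanish (the bottom-left $1\times3$ block and the $(e_0,e_\infty)$-entry are zero), which reconfirms $T(\mathcal P)\subseteq\mathcal P$ and shows that the matrix of $T|_{\mathcal P}$ is exactly the top-left $4\times4$ block $\left(\begin{smallmatrix}\lambda B & 0\\ b^T & 1\end{smallmatrix}\right)$, i.e. \eqref{eq.Laguerre_trafo_4x4}. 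Conversely, the remaining fifth-column data $\alpha=\tfrac{\lambda}{2}Bb$, $\nu=\tfrac{\lambda}{4}(b,b)_{\R^{2,1}}$, $\lambda^{-2}$ are uniquely determined by $(B,b,\lambda)$, so the restriction is a bijection onto the matrices \eqref{eq.Laguerre_trafo_4x4}; since oriented lines are precisely the points of $P(\mathcal P)$ on the cylinder and oriented contact is detected entirely within $\mathcal P$, this restriction is a \emph{faithful} representation of the Laguerre group, and every admissible $(B,b,\lambda)$ with $B\in O(2,1)$ lifts back to a genuine Laguerre transformation.

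For invariance of the cylinder, the key observation is that the Blaschke cylinder is the null cone of the \emph{degenerate} metric induced on $\mathcal P$. Since $e_1,e_2$ are spacelike, $e_5$ timelike, and $e_\infty$ is null and orthogonal to $e_1,e_2,e_5$, the restriction of $\langle\cdot,\cdot\rangle_{\R^{3,2}}$ to $\mathcal P$ has Gram matrix $G=\operatorname{diag}(1,1,-1,0)$ in the basis $(e_1,e_2,e_5,e_\infty)$; for $\tilde p=\tilde{\bv}+\tilde s e_5+2\tilde d e_\infty$ this gives $\langle\tilde p,\tilde p\rangle=|\tilde{\bv}|^2-\tilde s^2$, so the cylinder condition $|\tilde{\bv}|^2=\tilde s^2$ of \eqref{eq.Blaschke_cylinder_projective} is literally $\langle\tilde p,\tilde p\rangle=0$. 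It then suffices to compute $A^TGA$ for $A$ as in \eqref{eq.Laguerre_trafo_4x4}. Writing $G=J\oplus 0$ with $J=\operatorname{diag}(1,1,-1)$ the $O(2,1)$-form, the block multiplication yields
$$A^TGA=\begin{pmatrix}\lambda^2 B^TJB & 0\\ 0 & 0\end{pmatrix}=\lambda^2 G,$$
where I used $B^TJB=J$ because $B\in O(2,1)$. Thus $A$ rescales the induced metric by $\lambda^2$ and hence maps its null cone, the Blaschke cylinder, to itself.

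The computations here are routine block algebra; the genuinely delicate points are conceptual rather than technical. I expect the main care to lie in (i) justifying that $T\mapsto T|_{\mathcal P}$ is faithful at the projective level — which follows precisely because the fifth column is forced by the $4\times4$ block — and (ii) passing cleanly from the nondegenerate $\R^{3,2}$-metric to the degenerate induced form on $\mathcal P\cong\R^{2,1,1}$, in particular noting that for null-cone invariance one only needs preservation of the form \emph{up to a scalar}, which is all that $A^TGA=\lambda^2 G$ (and the projective nature of $PO(3,2)$) provides. Once the cylinder is identified with the null cone of $G$, its invariance is immediate.
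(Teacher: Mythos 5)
Your proof is correct and follows essentially the same route as the paper: the paper obtains the $5\times5$ normal form \eqref{e.Laguerre_trafo_5x5} by direct computation from the requirement that the hyperplane $\mathsf P$ be preserved, and then reads off the $4\times4$ restriction \eqref{eq.Laguerre_trafo_4x4}, exactly as you do; the theorem itself is stated in the paper as a summary of that discussion without a separate proof. The one point you make explicit that the paper leaves implicit is the invariance of the Blaschke cylinder: the paper would obtain it from the fact that the cylinder is the intersection of the Lie quadric with the preserved hyperplane $\mathsf P$, whereas your self-contained computation $A^TGA=\lambda^2G$ with the degenerate Gram matrix $G=\operatorname{diag}(1,1,-1,0)$ verifies the same thing directly and is correct (preservation of the null cone only requires preservation of the form up to a nonzero scalar, and $\lambda\neq0$ is forced by the $\lambda^{-2}$ entry of \eqref{e.Laguerre_trafo_5x5}).
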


We conclude by observing that Euclidean motions
$$
\bx\to\tilde{\bx}=R\bx+\Delta, \quad R\in O(2),\quad \Delta\in\R^2
$$
are particular Laguerre transformations, the corresponding matrix of which is given by \eqref{eq.Laguerre_trafo_4x4} with 
\begin{equation}
\label{eq.Euclidean_Laguerre}
B=\left(\begin{matrix} R^T & 0  \\[1mm]
                       0 & 1 
  \end{matrix}\right),\quad
b=\left(\begin{matrix} 2R\Delta   \\[1mm]
                       0  
  \end{matrix}\right)
\end{equation}
and $\lambda=1$. 


\end{document}